\documentclass[a4paper,10pt]{article}

\usepackage{amsthm}

\usepackage{lipsum}
\usepackage{amsfonts}
\usepackage{subcaption,graphicx}
\usepackage{epstopdf}
\usepackage{algorithmic}
\usepackage{amsmath,amssymb}
\usepackage{tikz}
\usepackage{hyperref}
\usepackage[noabbrev, capitalise, nameinlink]{cleveref}
\usepackage{multirow}
\usepackage{mathtools}
\usepackage{graphicx}
\usepackage{tikz,pgfplots}
\usepackage[square,numbers]{natbib}
\usepackage{easytable}
\usepackage{tabularray}
\usepackage{dsfont}

\usepackage[verbose,a4paper,tmargin=35mm,bmargin=32mm,lmargin=28mm,rmargin=28mm]{geometry}

\usepackage{color}

\usepackage[utf8]{inputenc}

\usepackage{xfrac}
\usepackage{nicefrac}
\usepackage{mathtools}

\usepackage{array}
\makeatletter
\newcolumntype{"}{@{\hskip\tabcolsep\vrule width 1pt\hskip\tabcolsep}}
\makeatother

\usepackage{framed}
\usepackage{graphicx, caption, subcaption}
\usepackage{float}

\newtheorem{theorem}{Theorem}

\newtheorem{lemma}[theorem]{Lemma}
\theoremstyle{definition}

\theoremstyle{remark}
\newtheorem{remark}[theorem]{Remark}

\Crefname{assumption}{Assumption}{Assumptions}
\numberwithin{theorem}{section}
\numberwithin{equation}{section}
\numberwithin{table}{section}
\numberwithin{figure}{section}

\ifpdf
\DeclareGraphicsExtensions{.eps,.pdf,.png,.jpg}
\else
\DeclareGraphicsExtensions{.eps}
\fi

\newcommand{{\hc}}[1]{\textcolor{blue}{ #1}}

\newcommand{\dx}{\,\mathrm{d}x}

\newcommand{\T}{\mathcal{T}}

\newcommand{\Ab}{\mathbf{A}}



\newcommand{\bfideal}{\varphi}

\DeclareMathOperator*{\argmin}{arg\,min}

\def\hyph{-\penalty0\hskip0pt\relax}

\title{\textbf{Hierarchical Super-Localized Orthogonal Decomposition Method}\footnote{
		The work of the authors is part of a project that has received funding from the European Research Council (ERC) under the European Union's Horizon 2020 research and innovation programme (Grant agreement No.~865751 -- RandomMultiScales).
		
		$\dagger$ Institute of Mathematics, University of Augsburg, Universit\"atsstr.~12a, 86159 Augsburg, Germany
		
		$\star$ Centre for Advanced Analytics and Predictive Sciences (CAAPS), University of Augsburg, Universit\"atsstr.~12a, 86159 Augsburg, Germany}}

\author{Jos\'e C. Garay$^\dagger$ \and Hannah Mohr$^\dagger$\and Daniel Peterseim$^{\dagger,\star}$\and Christoph Zimmer$^\dagger$ }

\date{\vspace{-5ex}}

\begin{document}
	\maketitle
	
\begin{abstract}
We present the construction of a sparse-compressed operator that approximates the solution operator of elliptic PDEs with rough coefficients. To derive the compressed operator, we construct a hierarchical basis of an approximate solution space, with superlocalized basis functions that are {quasi-}orthogonal across hierarchy levels with respect to the inner product induced by the energy norm. The superlocalization is achieved through a novel variant of the Super-Localized Orthogonal Decomposition method that is built upon corrections of basis functions arising from the Localized Orthogonal Decomposition method. The hierarchical basis not only induces a sparse compression of the solution space but also enables an orthogonal multiresolution decomposition of the approximate solution operator, decoupling scales and solution contributions of each level of the hierarchy. With this decomposition, the solution of the PDE reduces to the solution of a set of independent linear systems per level with mesh-independent condition numbers that can be computed simultaneously. We present an accuracy study of the compressed solution operator as well as numerical results illustrating our theoretical findings and beyond, revealing that desired optimal error rates with well-behaved superlocalized basis functions can still be attained even in the challenging case of coefficients with high-contrast channels.
\end{abstract}

\vspace{1cm}
\noindent\textbf{Keywords:}
Hierarchical basis, superlocalization, numerical homogenization,  orthogonal multiresolution decomposition, sparse compression, rough coefficients, multiscale method
\\[2ex]
\textbf{AMS subject classifications:}
65N12,
65N15, 	
65N30

\section{Introduction}

Let $\Omega\in \mathbb{R}^d$ ($d=1,2,3$) be a bounded Lipschitz polytope and $f\in L^2(\Omega)$. We consider the following elliptic problem {encoding linear diffusion type problems}: Find $u\in H^{1}_{0}(\Omega)$ such that
\begin{equation}\label{Continuous-Formulation}
a(u,v)=(f,v)_{L^2(\Omega)}\;\;\;\;\;\;\text{for all } v\in H^{1}_{0}(\Omega),
\end{equation}
where
\begin{equation}
a(u,v)\coloneqq\int_{\Omega}(\mathbf{A}\nabla u) \cdot \nabla v \dx.
\end{equation}
Here, $\mathbf{A}\in L^{\infty}(\Omega,\mathbb{R}^{d\times d})$ is a symmetric matrix-valued function for which there exist constants $0<\alpha\leq\beta<\infty$ such that for almost all $x\in\Omega$ and for all $\eta\in \mathbb{R}^d$
\begin{equation}\label{A-spectral-bound}
\alpha\|\eta\|_2^2\leq(\mathbf{A}(x)\eta)\cdot\eta\leq\beta\|\eta\|_2^2.
\end{equation}
Since the entries of $\mathbf{A}$ are in $L^{\infty}(\Omega)$, the coefficient $\mathbf{A}$ is called {\it rough}. The (linear) solution operator of (\ref{Continuous-Formulation}) is $\mathcal{A}^{-1}:L^2(\Omega)\rightarrow H_{0}^{1}(\Omega)$, $f\mapsto u$. 
\begin{remark}
The method presented in this work can be extended to the case where $\mathbf{A}$ is non-symmetric; see, e.g.,  \cite{HaPe21}. Additionally, without loss of generality, we could assume $\alpha=1$ (see \cite{Brenner2022}).
\end{remark}

Let $\mathcal{T}_h$ be a Cartesian mesh of $\Omega$ with mesh size $h$, and let $V_h$ be the $\mathcal{Q}_{1}$ finite element space associated with $\mathcal{T}_h$ \cite{BrS08}. We assume that $h$ is small enough so that $\mathcal{T}_h$ resolves all characteristic lengths of $\mathbf{A}$ and the linear operator $\mathcal{A}^{-1}_{h}:L^{2}(\Omega)\rightarrow V_h$ given by
\begin{equation}\label{Fine-problem-def}
	a(\mathcal{A}^{-1}_{h}g,v)=(g,v)_{L^{2}(\Omega)} \;\;\;\text{for all }\;v\in V_{h} \;\text{and } g\in L^{2}(\Omega)
\end{equation}
is a good approximation of $\mathcal{A}^{-1}$ in the sense that $\|\mathcal{A}^{-1}f-\mathcal{A}_h^{-1}f\|_a$ is small for all $f\in L^2(\Omega)$. Here $\|\cdot\|_a$ is the (energy) norm induced by $a(\cdot,\cdot)$.
Denote by $\mathcal{T}_H$ the Cartesian mesh of $\Omega$ with mesh size $H>h$, and let $\mathcal{V}_H=\mathbf{span}\{\mathcal{A}^{-1}_h \chi_{T} \;:\; T\in \mathcal{T}_H\}$, where $\chi_{T}$ is the characteristic function of $T$. The discrete approximate-solution operator of problem (\ref{Continuous-Formulation}) in $\mathcal{V}_H$ is defined as $\mathcal{A}^{-1}_{H}:L^2(\Omega)\rightarrow \mathcal{V}_H$ such that
\begin{equation}\label{Coarse-problem-def}
a(\mathcal{A}^{-1}_{H} f,v_H)=(f,v_H)_{L^2(\Omega)}\;\;\;\;\text{for all } v_H\in\mathcal{V}_H.
\end{equation}
From the definition of $\mathcal{V}_{H}$ we observe that $\mathcal{A}^{-1}_H f=\mathcal{A}^{-1}_h f$ if $f\in \mathbb{Q}^{0}(\mathcal{T}_{H})$. In other terms, if $\Pi_H:L^2(\Omega)\rightarrow \mathbb{Q}^{0}(\mathcal{T}_H)$ denotes the $L^2$-orthogonal projection onto $\mathbb{Q}^{0}(\mathcal{T}_H)$, then $\mathcal{A}^{-1}_{H}\circ \Pi_H=\mathcal{A}^{-1}_h\circ\Pi_H$.
This equality implies that $\mathcal{A}^{-1}_H$ should also be a good approximation of $\mathcal{A}^{-1}$ whenever $\|f-\Pi_{H}f\|_{L^2(\Omega)}$ is small \cite{AHP21,MalP20}. The advantage of solving \eqref{Coarse-problem-def} instead of \eqref{Fine-problem-def} is that it requires the solution of a much smaller linear system (in an online stage), with the caveat that the assembly of the associated stiffness matrix requires the prior computation of the basis functions $\mathcal{A}^{-1}_h\chi_{T}$ (in an offline stage). Consequently, since such basis functions are independent of $f$, the use of \eqref{Coarse-problem-def} as a method to solve \eqref{Continuous-Formulation} is justified whenever there is a need to solve \eqref{Continuous-Formulation} multiple times with different $f\in L^2(\Omega)$, provided that $\|f-\Pi_{H}f\|_{L^2(\Omega)}$ is small and the sum of the offline and online times do not exceed the time required to solve \eqref{Fine-problem-def} the same number of times. Unfortunately, the natural basis functions $\mathcal{A}^{-1}_h\chi_{T}$ are slowly-decaying and globally supported (i.e., their values are not negligible outside a local (small) patch). As a consequence, not only are they expensive to compute when multiple scales are present in $\mathbf{A}$ (in particular with a rough $\mathbf{A}$), but also the associated stiffness matrix becomes dense, rendering the Finite Element solution of (\ref{Continuous-Formulation}) impractical with this basis. To obtain a computationally more advantageous solution scheme, it would be ideal to have a basis of $\mathcal{V}_H$ comprised of locally supported functions obtained through local computations. In later sections, we shall see that this is not always possible, but in such cases we can still find basis functions that are rapidly decaying and well approximated by locally computable and supported functions. The space spanned by these localized basis functions will generally not coincide with $\mathcal{V}_H$ but still be a discrete solution space of (\ref{Continuous-Formulation}) with good approximation properties. 

Numerical homogenization methods such as the Localized Orthogonal Decomposition (LOD) \cite{MaP14, KPY18, PeS16, HeP13, BrennerLOD} provide a way to construct the aforementioned approximate solution spaces for (\ref{Continuous-Formulation}) spanned by local basis functions. In this method, local basis functions are obtained by truncating the support of (in most cases) exponentially decaying functions spanning $\mathcal{V}_H$. {Other numerical homogenization approaches that could also be employed to obtain localized basis functions are, e.g., the Multiscale Finite Element method (MsFEM) \cite{HoW97,EH09,blanc2023homogenization}, the Generalized Multiscale Finite Element method (GMsFEM) \cite{EFENDIEV2013116}, the Generalized Finite Element method (GFEM)\cite{BaL11, BLS20}, Bayesian methods \cite{Owh15}, rough polyharmonic splines methods \cite{OZB14}, and FETI-DP and BDDC inspired multiscale methods \cite{MaSa21,KKR18}; we refer to \cite{AHP21,CEH23} for more comprehensive reviews.}

Given a hierarchy of nested meshes $\{\mathcal{T}_{H_{\ell}}\}_{\ell\in\{0,\ldots,L\}}$ (with $H_i<H_j$ for $j<i$) obtained by multiple refinements of a given mesh $\mathcal{T}_{H_0}$, and a set of nested function spaces $\{\mathcal{V}_{H_{\ell}}\}_{\ell\in\{0,\ldots,L\}}$ associated with each of these meshes, a hierarchical basis of $\mathcal{V}_{H_{L}}$ is defined as one that incorporates functions from each of the nested spaces. A solution space of (1.1) can also be constructed using a hierarchical basis with $a$-orthogonal basis functions across levels \cite{Owh17,FeP20}. This scheme is particularly useful in the presence of multiple scales within the diffusion matrix $\mathbf{A}$. The $a$-orthogonality across levels allows for an orthogonal multiresolution decomposition of the solution operator. In other words, the solution operator can be expressed as the sum of certain operators, each of these representing the contribution to the solution of one of the levels of the hierarchy, with contributions that are independent of each other. Moreover, the multiresolution qualifier in this context refers to the decoupling of scales across levels arising from this decomposition. This decoupling implies that each level contains only the information of the scales necessary to achieve an accuracy determined by the mesh size of that level and the function $f$. Consequently, to attain a particular accuracy, we need to consider the aggregate contribution of all the levels up to the level associated with the mesh size corresponding to the required accuracy. An important aspect of this hierarchical scheme is that, given the independence of level contributions, these contributions can be computed simultaneously, reducing the computational time of the aggregate solution. In algebraic terms, the stiffness matrix associated with the $a$-orthogonal hierarchical basis possesses a block-diagonal structure, where each diagonal block corresponds to a particular level of the hierarchy and is obtained from the basis functions belonging to that particular level.

{The construction of a hierarchical basis inducing an orthogonal multiresolution decomposition of the solution operator was first introduced in \cite{Owh17}, where basis functions were denominated {\it gamblets}}. In \cite{FeP20}, a hierarchical basis was obtained connecting the concept of gamblets with the LOD framework, and combining the notions of Haar basis functions and the multiresolution LOD method. In that work, the Haar basis, i.e., an $L^2$-orthogonal hierarchical basis of $\mathbb{Q}^{0}(\mathcal{T}_H)$ with locally supported basis functions is used as a discontinuous companion of a certain regularized hierarchical basis constructed with the LOD method. A superlocalization method known as Super-Localized Orthogonal Decomposition (SLOD) was introduced in \cite{HaPe21b} (see also {\cite{Bonizzoni-Freese-Peterseim,BHP22,pumslod,Freese-Hauck-Peterseim, PeWZi23, HaMoPe24})}, providing basis functions with superexponential decay whose supports are reduced in relation to their LOD counterpart. In this paper we first present a method to construct (almost-fully) $a$-orthogonal hierarchical basis functions localized by a novel variant of the SLOD method (based on stable corrections of LOD basis functions), and later we build a sparse-compressed operator that approximates the operator $\mathcal{A}^{-1}_h$ and therefore approximates $\mathcal{A}^{-1}$.

By combining the concepts of hierarchical $a$-orthogonal-across-levels basis and the SLOD method, the new method allows for a greater sparse compression of the solution space in comparison with the standard one-level SLOD, gamblets-based, and hierarchical LOD methods. This implies a greater sparsity of the associated stiffness matrix required to achieve a given accuracy (and a greater sparsity of the matrix obtained from the sparse-compressed approximations of its inverse, see Section 4), resulting in substantial savings in memory usage and online computational effort. Furthermore,  the new approach introduced to construct well-behaved superlocalized basis functions facilitates the computation of the sparse-compressed approximate-solution operator, where its approximation quality and computational cost benefit from the quality of the basis.



The remaining part of the paper is structured as follows. In \cref{Sec-Construction}, we present the construction of a localized $a$-orthogonal hierarchical basis. In \cref{Sec-Stable-corrections}, an LOD basis correction strategy to obtain a stable superlocalized hierarchical basis is provided. The construction of a sparse-compressed approximate-solution operator and the corresponding error analysis are given in \cref{Sec-Compressed-Op-construction-and-Error-Analysis}. Finally, \cref{sec:Numerical-Experiments} illustrates our theoretical findings through numerical experiments.


\section{Construction of hierarchical basis}\label{Sec-Construction}
In this section we focus on the construction of an $a$-orthogonal hierarchical basis. To that end, we provide first a procedure to obtain hierarchical bases of $\mathcal{V}_H$ whose basis functions regardless of their support are $a$-orthogonal across levels. Then we focus on localization strategies to obtain a basis of practical usage.

\subsection{{\emph a}-orthogonal bases of $\mathcal{V}_H$}\label{ss-Orth-bases}\label{subsec:A-orth-bases}
Let $\mathcal{T}_0$ denote the Cartesian mesh of $\Omega$ with mesh size $H_0$. Consider $H_0>H_1>H_2>\ldots>H_L>0$ (with $H_L=H$), and let $\{\mathcal{T}_{\ell}\}_{ \ell\in\{0,\ldots,L\}}$ be a set of Cartesian meshes of $\Omega$ obtained by successive refinements of $\mathcal{T}_{0}$. Furthermore, assume $H_{j+1}=H_j/2$ for $j\leq L-1$. 

Recall that $\mathcal{T}_h$ is a Cartesian mesh of $\Omega$ with mesh size $h$ and the operator $\mathcal{A}^{-1}_{h}$ is given by \eqref{Fine-problem-def}. Define $\mathcal{V}_{\ell}:=\mathbf{span}\{\mathcal{A}^{-1}_h \chi_{T}: T\in \mathcal{T}_{\ell}\}$, where $\chi_T$ is the characteristic function of $T$. In what follows, we derive a hierarchical basis of $\mathcal{V}_L\left(=\mathcal{V}_{H}\right)$.  Let $N_{\ell}:=\#\T_{\ell}$, and let $\mathcal{B}_{\ell}$ be the set of basis functions of $\mathcal{V}_{{L}}$ associated with level $\ell\leq L$. For $\ell=0$, we set $\#\mathcal{B}_0=N_{0}$. For $0< \ell \leq L$, the set $\mathcal{B}_{\ell}$ consists of $(2^d-1)N_{\ell-1}$ linearly independent functions that are $a$-orthogonal to every function in $\cup_{s=0}^{\ell-1}\mathcal{B}_{s}$. With this selection, $\cup_{s=0}^{\ell}\mathcal{B}_{s}$ is a basis of $\mathcal{V}_{\ell}$ for every $\ell$.

For any level $0\leq \ell \leq L$ and $i\in \{1,\ldots,\# \mathcal{B}_{\ell}\}$, we define the basis function  $\varphi_{\ell,i}\in \mathcal{B}_{\ell} \subset \mathcal{V}_{L}$ as 
\begin{equation}\label{varphi-levl2}
\varphi_{\ell,i}=\mathcal{A}^{-1}_{h}g_{\ell,i} \quad \mathrm{with}\quad g_{\ell,i} \coloneqq \sum_{K\in \mathcal{T}_{{\ell}}}c^{(\ell,i)}_{K}\chi_{K}.
\end{equation}
The non-trivial coefficients $c^{(\ell,i)}_{K}$ are chosen such that $\{g_{0,i}\}_{i\in\{1,\ldots,N_0\}}$ is a basis of $\mathbb{Q}^0(\mathcal{T}_{0})$, and for $\ell>0$, the $a$-orthogonality condition across levels $a(\varphi_{p,j},\varphi_{\ell,i})=0$ holds for $0\leq p<\ell$. Henceforth, we refer to $g_{\ell,i}$ as the $\mathbb{Q}^0$-companion of $\varphi_{\ell,i}$.

Investigating the $a$-orthogonality condition, we observe that
\begin{equation}\label{Lev1-ort-cond-start1}
	\nonumber a(\varphi_{p,j},\varphi_{\ell,i})= \left( g_{p,j},\varphi_{\ell,i} \right)_{L^{2}(\Omega)}=\sum_{T\in \T_p}c_T^{(p,j)}\int_{T}\bfideal_{\ell,i}\dx=0.
\end{equation}
Therefore, the $a$-orthogonality condition $a(\bfideal_{p,j},\bfideal_{\ell,i})=0$ is satisfied if
\begin{equation}\label{intT-phi-equ-0}
	\int_{T}\varphi_{\ell,i}\dx=0 \quad \text{for all } \,T\in \mathcal{T}_{\ell-1},
\end{equation}
since the integral over a mesh element belonging to $\mathcal{T}_{p}$ with $p\leq \ell-1$ is just a sum of integrals over mesh elements belonging to $\T_{\ell-1}$. Defining $\alpha_{T,K}:=\int_{T}\mathcal{A}_{h}^{-1}\chi_{K}\dx$ and using the definition of $\varphi_{\ell,i}$ from \eqref{varphi-levl2}, we can rewrite condition \eqref{intT-phi-equ-0} as 
\begin{equation}\label{Alpha-def-1}
	\sum_{K\in\T_\ell} c_K^{(\ell,i)} \alpha_{T,K} = 0  \quad \text{for all } \,T\in \mathcal{T}_{\ell-1}.
\end{equation}
Let $\mathbf{D^{(\ell)}} \in \mathbb{R}^{N_{\ell-1}\times N_{\ell}}$ such that $\mathbf{D}_{n,m}^{(\ell)}=\alpha_{T_n,K_m}$  with $T_n\in \T_{\ell-1}$, $K_m\in \T_{\ell}$ and $\mathbf{c}^{(\ell,i)}=\left(c_{K}^{(\ell,i)}\right)_{K\in \mathcal{T}_{\ell}}$. Hence, the $a$-orthogonality condition is satisfied for any $\ell>0$ if  
\begin{equation}\label{Global-a-orth-sufficient-cond} 
\mathbf{D}^{(\ell)}\mathbf{c}^{(\ell,i)}=\mathbf{0} \quad \text{for all } i\,\in \{1,\ldots,(2^d-1)N_{\ell-1}\},
\end{equation}
which implies that $\mathbf{c}^{(\ell,i)}\in \text{Ker}(\mathbf{D}^{(\ell)})$. Note that $\text{rank}(\mathbf{D}^{(\ell)})\leq N_{\ell-1}$. Consequently, $\text{dim}(\text{Ker}(\mathbf{D}^{(\ell)}))\geq N_{\ell}-N_{\ell-1}=(2^d-1)N_{\ell-1}$. In principle, we could take any set of $(2^d-1)N_{\ell-1}$ linearly independent vectors of $\text{Ker}(\mathbf{D}^{(\ell)})$ for level $\ell>0$ and any basis of $\mathbb{Q}^{0}(\mathcal{T}_0)$ to determine a basis of $\mathcal{V}_{{L}}$. Note, however, that the associated basis functions $\varphi_{\ell,i}$ will in general be globally supported and the corresponding stiffness matrix will not be sparse. Therefore, an ideal choice would be a set of coefficient vectors for which the associated functions $\varphi_{\ell,i}$ are locally supported. We study the feasibility of such an option in the following subsection. 

\begin{remark}
Enforcing condition  \eqref{Global-a-orth-sufficient-cond} is not the only path to achieve $a$-orthogonality across levels. Nevertheless, this is an advantageous option from the computational point of view.
\end{remark}

\subsection{Toward basis functions with local support}\label{ss-basis-local}
Ideally, we would like to find coefficients such that each previously defined basis function $\varphi_{\ell,i}$ is locally supported. In this subsection we investigate whether it is possible to obtain locally supported 
functions of $\mathcal{V}_{\ell}$ from locally supported $\mathbb{Q}^0$-companions. To explore this possibility, we need a series of definitions, some of which are adaptations of ideas found in \cite{BHP22}.

For $\ell\in\{0,\ldots,L\}$, $m\in \mathbb{N}_{0}$, $j\in\{1,\ldots,N_{\ell-1}\}$, the $m$-th order patch $\omega_j^{(\ell,m)}\subset\Omega$  of an element $T_j\in \mathcal{T}_{{\ell-\min\{\ell,1\}}}$ is defined recursively as
\begin{equation*}
\omega_{j}^{(\ell,m)}=\bigcup \{T\in \mathcal{T}_{{\ell-\min\{\ell,1\}}}: T\cap \omega^{(\ell,m-1)}_{j}\neq \emptyset\} \quad \mathrm{with}\quad \omega^{(\ell,0)}_{j}=T_j\in \mathcal{T}_{{\ell-\min\{\ell,1\}}}.
\end{equation*}
Thus, for level $\ell=0$, the patches are unions of elements in $\mathcal{T}_{\ell}$, and for $\ell\geq1$ they are unions of elements in $\mathcal{T}_{\ell-1}$.

In what follows, we fix $m\geq1$, $0 \leq \ell \leq L$, and $T_j\in\mathcal{T}_{\ell-\min\{\ell,1\}}$, and let $\omega:=\omega_{j}^{(\ell,m)}$. The restriction of $\mathcal{T}_{\ell}$ to the patch $\omega$ is denoted by $\mathcal{T}_{\ell,\omega}:=\{T\in\mathcal{T_{\ell}}:T\subset \omega\}$. Similarly, the restriction of $V_h$ to $\omega$ is given by $V_{h,\omega}:=\{v\in H^1_0(\omega):v_{|_{T}}\in \mathbb{Q}^{1}(T)\; \text{for all } T\in \mathcal{T}_{h,\omega}\}$. Let $\widetilde{V}_{h,\omega}:=\{v\in H^1(\omega):v_{|_{T}}\in \mathbb{Q}^{1}(T)\; \text{for all } T\in \mathcal{T}_{h,\omega}\}$. Thus, the trace of a function in $\widetilde{V}_{h,\omega}$ may be non-zero. With $\Sigma=\partial \omega \setminus \partial \Omega$, we define the trace operator
\begin{equation*}
\text{tr}_{\Sigma}: \widetilde{V}_{h,\omega}\rightarrow X_{\omega}:=\text{image tr}_{\Sigma}\subset H^{1/2}(\Sigma).
\end{equation*}
Note that the space $X_{\omega}$ can be equipped with the norm
\begin{equation*}
\|w\|_{X_{\omega}}:=\text{inf}\left\{ \|v\|_{H^{1}(\omega)}: v\in \widetilde{V}_{h,\omega}, \; \text{tr}_{\Sigma}v=w \right\}.
\end{equation*}
By definition of the $\|\cdot\|_{X_{\omega}}$ norm, the continuity of the trace operator holds regardless of the patch geometry, i.e., 
\begin{equation}\label{trace-norm-continuity}
\|\text{tr}_{\Sigma}v\|_{X_{\omega}}\leq\|v\|_{H^{1}(\omega)} \;\;\;\;\;\;\text{for all } v\in \widetilde{V}_{h,\omega}\subset H^1(\omega).
\end{equation}

Let $a_{\omega}(\cdot,\cdot):H^{1}(\omega) \times H^{1}(\omega) \rightarrow \mathbb{R}$ such that $a_\omega(u,v)\coloneqq\int_{\omega}(\mathbf{A}\nabla u) \cdot \nabla v \dx$,
and let the linear operator $\mathcal{A}^{-1}_{h,\omega}:L^{2}(\omega)\rightarrow V_{h,\omega}$ be given by
\begin{equation*}
a_\omega(\mathcal{A}^{-1}_{h,\omega} g,v )=(g,v)_{L^2(\omega)},\;\;\; \text{for all } v\in V_{h,\omega}, \text{ and } g\in L^{2}(\omega).
\end{equation*}
Denote by $\mathcal{E}_\omega: L^2(\omega) \rightarrow L^2(\Omega)$ 
an extension by zero operator. Then, with $g=\sum_{T\in \mathcal{T}_{{\ell},\omega}}c_{T}\chi_{T}\in \mathbb{Q}^{0}(\mathcal{T}_{{\ell},\omega})$, consider 
\begin{equation*}
 \bar{\psi}_{g}:=\mathcal{A}^{-1}_{h,\omega}g \in \mathcal{V}_{\ell,\omega}\quad \mathrm{and}\quad \psi_g:=\mathcal{A}^{-1}_{h}\mathcal{E}_\omega(g) \in \mathcal{V}_{\ell},
\end{equation*}
where $\mathcal{V}_{\ell,\omega}=\{\mathcal{A}^{-1}_{h,\omega}\chi_T : T\in \mathcal{T}_{\ell,\omega}\}$.
We conclude the set of definitions by defining the conormal derivative of $\bar{\psi}_g$ as the functional $\gamma_{\bar{\psi}_g}=\mathbf{A}\nabla \bar{\psi}_g \cdot \mathbf{n}:X_\omega \rightarrow \mathbb{R}$ such that
\begin{equation}\label{Def-Conormal-Der}
\langle \gamma_{\bar{\psi}_g},\text{tr}_{\Sigma}(v) \rangle=a_{\omega}(\bar{\psi}_g,v)-(g,v)_{L^2(\omega)}\;\;\;\;\text{for all } v\in H^{1}(\omega).
\end{equation}
Note that $\mathcal{E}_{\omega}(\bar{\psi}_g)$ does not necessarily belong to $\mathcal{V}_{\ell}$. However, if $g$ is such that $\mathcal{E}_{\omega}(\bar{\psi}_g)=\psi_g$, then $\psi_g$ would be a locally supported function of $\mathcal{V}_{\ell}$ whose $\mathbb{Q}^{0}$-companion $\mathcal{E}_{\omega}(g)$ is also locally supported.

In the following lemma, we determine a bound for the energy norm of the {\it localization error} $\mathcal{E}_{\omega}(\bar{\psi}_{g})-\psi_g$, which depends on the $X_{\omega}'$-norm of $\bar{\psi}_{g}$ given by
\begin{equation}\label{ConormalDer-Norm-definition}
\|\gamma_{\bar{\psi}_{g}}\|_{X_{\omega}'}=\sup_{w\in X_\omega\setminus \{0\}}\frac{\langle \gamma_{\bar{\psi}_{g}},w \rangle}{\|w\|_{X_\omega}},
\end{equation}
providing a way to measure the dependence of the error on the coefficients defining $g$.
\begin{lemma}\label{Lemma-Localization-Error-bound-conormalDer}
Let $\bar{\psi}_{g}$, $\psi_g$, and $\gamma_{\bar{\psi}_{g}}$ be defined as above. Then, the energy norm of the localization error has the bound
\begin{equation}\label{Loc-Error-bound-conormalDer}
\|\mathcal{E}_\omega(\bar{\psi}_{g})-\psi_g\|_{a}\leq \frac{1+\frac{\mathrm{diam}(\Omega)}{\pi}}{\sqrt{\alpha}}\|\gamma_{\bar{\psi}_{g}}\|_{X_{\omega}'}
\end{equation}
where $\mathrm{diam}(\Omega)$ denotes the diameter of $\Omega$ and the constant $\alpha$ is given in (\ref{A-spectral-bound}).
\end{lemma}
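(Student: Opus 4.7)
The plan is to set $e \coloneqq \mathcal{E}_\omega(\bar\psi_g) - \psi_g$ and exploit a Galerkin-type identity for $a(e,v)$. First I note that both summands lie in $V_h$: $\psi_g\in V_h$ by definition, and since $\bar\psi_g\in V_{h,\omega}\subset H^1_0(\omega)$ agrees with a $\mathbb{Q}^1$ finite element function that vanishes on $\Sigma$, its zero extension is in $V_h$. Consequently $e\in V_h\subset H^1_0(\Omega)$, which is the key fact that will let me invoke Poincar\'e on $\Omega$ at the end.

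Next, for every $v\in V_h$ I combine the defining equations of $\psi_g$ and $\bar\psi_g$. Since $\mathcal{E}_\omega(\bar\psi_g)$ has gradient vanishing outside $\omega$, $a(\mathcal{E}_\omega(\bar\psi_g),v)=a_\omega(\bar\psi_g,v|_\omega)$; the conormal-derivative identity \eqref{Def-Conormal-Der} then rewrites this as $\langle\gamma_{\bar\psi_g},\mathrm{tr}_\Sigma v|_\omega\rangle+(g,v|_\omega)_{L^2(\omega)}$. Meanwhile $a(\psi_g,v)=(\mathcal{E}_\omega g,v)_{L^2(\Omega)}=(g,v|_\omega)_{L^2(\omega)}$. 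Subtracting yields the identity
\begin{equation*}
a(e,v)=\langle\gamma_{\bar\psi_g},\mathrm{tr}_\Sigma v|_\omega\rangle\qquad\text{for all }v\in V_h.
\end{equation*}

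The crucial trick is to test with $v=e$ and then rewrite the trace on the right in terms of $e$ itself rather than $\psi_g$. Since $\bar\psi_g$ has zero trace on $\Sigma$, we have $\mathrm{tr}_\Sigma e|_\omega=-\mathrm{tr}_\Sigma\psi_g|_\omega$, so
\begin{equation*}
\|e\|_a^2 = \langle\gamma_{\bar\psi_g},\mathrm{tr}_\Sigma e|_\omega\rangle \leq \|\gamma_{\bar\psi_g}\|_{X_\omega'}\,\|\mathrm{tr}_\Sigma e|_\omega\|_{X_\omega}
\end{equation*}
by definition of the dual norm \eqref{ConormalDer-Norm-definition}, and the trace continuity \eqref{trace-norm-continuity} applied to $e|_\omega\in\widetilde{V}_{h,\omega}$ gives $\|\mathrm{tr}_\Sigma e|_\omega\|_{X_\omega}\leq\|e|_\omega\|_{H^1(\omega)}\leq\|e\|_{H^1(\Omega)}$.

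Finally, I use that $e\in H^1_0(\Omega)$ to apply the Poincar\'e inequality $\|e\|_{L^2(\Omega)}\leq(\mathrm{diam}(\Omega)/\pi)\|\nabla e\|_{L^2(\Omega)}$, obtaining $\|e\|_{H^1(\Omega)}\leq(1+\mathrm{diam}(\Omega)/\pi)\|\nabla e\|_{L^2(\Omega)}$ with the sum-definition of the $H^1$ norm, and then the spectral bound \eqref{A-spectral-bound} gives $\|\nabla e\|_{L^2(\Omega)}\leq\|e\|_a/\sqrt\alpha$. Combining yields $\|e\|_a^2\leq(1+\mathrm{diam}(\Omega)/\pi)/\sqrt\alpha\cdot\|\gamma_{\bar\psi_g}\|_{X_\omega'}\|e\|_a$, and dividing by $\|e\|_a$ proves \eqref{Loc-Error-bound-conormalDer}. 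The main obstacle is spotting the sign trick in step three — without replacing $\mathrm{tr}_\Sigma\psi_g|_\omega$ by $-\mathrm{tr}_\Sigma e|_\omega$ one ends up with $\|\psi_g\|_a$ on the right-hand side and cannot absorb anything.
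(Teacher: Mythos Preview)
Your proof is correct and follows essentially the same route as the paper: derive the identity $a(e,v)=\langle\gamma_{\bar\psi_g},\mathrm{tr}_\Sigma v|_\omega\rangle$, test with $v=e$, and chain the dual-norm definition, trace continuity \eqref{trace-norm-continuity}, Poincar\'e--Friedrichs, and the spectral bound \eqref{A-spectral-bound}. The ``sign trick'' you highlight is in fact unnecessary --- testing with $v=e$ already puts $\mathrm{tr}_\Sigma e|_\omega$ on the right-hand side directly, with no rewriting in terms of $\psi_g$ required.
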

\begin{proof}
We have $\text{for all } v \in H^{1}(\Omega)$ that
\begin{eqnarray}\label{Loc-Error-bound-conormalDer-prelim-result}
\nonumber a(\mathcal{E}_{\omega}(\bar{\psi}_{g})-\psi_g,v)&=&a_{\omega}(\bar{\psi}_g,v_{|_{\omega}})-(g,v_{|_{\omega}})_{L^2(\omega)} \\ \nonumber
&=&\langle \gamma_{\bar{\psi}_{g}},\text{tr}_{\Sigma}(v_{|_{\omega}}) \rangle \\ \nonumber
&\leq & \|\gamma_{\bar{\psi}_{g}}\|_{X_{\omega}'} \|\text{tr}_{\Sigma}(v_{|_{\omega}})\|_{X_{\omega}} \\ \nonumber
&\leq &\|\gamma_{\bar{\psi}_{g}}\|_{X_{\omega}'}\|v_{|_{\omega}}\|_{H^{1}(\omega)}\\ 
&\leq & \frac{1+\frac{\mathrm{diam}(\omega)}{\pi}}{\sqrt{\alpha}}\|\gamma_{\bar{\psi}_{g}}\|_{X_{\omega}'} \|v_{|_{\omega}}\|_{a_{\omega}},
\end{eqnarray}
where the first inequality comes from the definition of the $X'_{\omega}$-norm, the second inequality from (\ref{trace-norm-continuity}), and the last inequality is obtained using Poincar\'e-Friedrichs inequality and the spectral bound of $\mathbf{A}$ from (\ref{A-spectral-bound}). 

Then, with $v=\mathcal{E}_{\omega}(\bar{\psi}_{g})-\psi_g$ in \eqref{Loc-Error-bound-conormalDer-prelim-result}, and since $\|v_{|_{\omega}}\|_{a_{\omega}}\leq \|v\|_{a}$ and $\mathrm{diam}(\omega)\leq \mathrm{diam}(\Omega)$, the inequality \eqref{Loc-Error-bound-conormalDer} is obtained.
\end{proof}

It follows from Lemma \ref{Lemma-Localization-Error-bound-conormalDer} that, if there exists a non-trivial $g\in \mathbb{Q}^0(\mathcal{T}_{\ell,\omega})$ such that $\|\gamma_{\bar{\psi}_{g}}\|_{X_{\omega}'}$ equals $0$, then there exists a locally supported function of $\mathcal{V}_{\ell}$ with locally supported $\mathbb{Q}^0$-companion $\mathcal{E}_\omega(g)$.

\subsection{A practical hierarchical basis}\label{sec:practical_basis}

Combining the results from the previous two subsections, we want to find local $\mathbb{Q}^0$ functions such that their local regularized companions (in $\mathcal{V}_{\ell,\omega}$) not only satisfy the $a$-orthogonality condition across levels but also have zero conormal derivative. However, as it can be observed in practice, it is not always possible to find local $\mathbb{Q}^0$ functions such that the above two conditions are satisfied exactly. In those cases, whether it is still possible to obtain locally-supported basis functions of $\mathcal{V}_{\ell}$ from the linear combination of globally-supported functions remains an open question. However, even if this is possible, it might not be the best option from the computational perspective. Thus, to obtain a computationally more advantageous scheme which only requires local computations, we might have to find a compromise between satisfying the above two sufficient conditions exactly and computational effort, at the expense of small errors. In what follows, we derive a practical hierarchical basis which is a good compromise, and in certain cases even fulfills those design ideals.

As mentioned in \Cref{subsec:A-orth-bases}, we need to find at every level $N_\ell^b$ functions, with
\begin{equation*}
	N_\ell^b \coloneqq \begin{cases}
		N_0, & \text{if }\ell=0,\\
		(2^d-1)N_{\ell-1}, & \text{if }\ell>0,
	\end{cases}
\end{equation*} which are linearly independent and $a$-orthogonal to every basis function associated with a coarser level.

Since the level of the patches can be inferred from the level of functions defined in them, for the sake for readability, we henceforth drop the superindices $(\ell,m)$ from $\omega_{j}^{(\ell,m)}$ and refer to the patches $\omega_{j}^{(\ell,m)}$ as $\omega_{j}$. Let $J_i:=\left\lfloor \frac{i-1} {(2^d-1)^{\min\{\ell,1\}}} \right \rfloor +1$. With $\mathbf{c}^{(\ell,i)}=\left(c_K\right)_{K\in\mathcal{T}_{\ell,\omega_{J_i}}}$, we define the local functions
\begin{equation}\label{barpsi-local-def}
\bar{\psi}_{\ell,i}=\bar{\psi}_{\ell,i}(\cdot;\mathbf{c}):=\mathcal{A}_{h,\omega_{J_i}}^{-1}\sum_{K\in \mathcal{T}_{{\ell},\omega_{J_i}}}c_K \chi_{K}
\end{equation} 
for $\ell=\{0,\ldots,L\}$ and $i\in\{1,\ldots,N^{b}_{\ell}\}$.

\begin{remark}
Note that $J_i$ denotes the index of the patch in which the function $\bar{\psi}_{\ell,i}$ is defined. By definition, $J_i=J_j=k$ whenever $(k-1)(2^d-1)+1< i,j \leq k(2^d-1)$ for $k\in \{1,\ldots,N_{\ell-1}\}$. Consequently, there are $2^d-1$ functions $\bar{\psi}_{\ell,i}$ defined in the patch $\omega_{k}$. This definition of $J_i$ is motivated by the idea that, in the minimal patch case (i.e., $m=0$), there are at most $2^d-1$ linearly independent functions $\bar{\psi}_{\ell,i}$ defined in a patch $\omega_{k}$ such that the orthogonality condition $a(\mathcal{E}_{\omega_{k}}(\bar{\psi}_{\ell,i}),\mathcal{E}_{\omega_{J_j}}(\bar{\psi}_{p,j}))=0$ holds for $0\leq p <\ell$, $j\leq N^{b}_{p}$, and $\omega_{k}\subset \omega_{J_j}$ (cf. \eqref{linsys-coeffs-local} below).
\end{remark}

Analogously to \eqref{intT-phi-equ-0}, at level $\ell>0$ we choose the coefficients $c_K^{(\ell,i)}$ for $i=1,\ldots,(2^d-1)N_{\ell-1}$ such that
\begin{equation}\label{varphi-zero-integral}
\int_{T}\bar{\psi}_{\ell,i}\dx=0\;\;\;\; \text{for all } \,T\in \T_{{\ell-1},\omega_{J_i}},
\end{equation}
which is equivalent to the condition
 \begin{equation}\label{linsys-coeffs-local}
 \sum_{K\in T_{{\ell},\omega_{J_i}}} c_K \alpha_{T,K}^{(J_i)}=0\;\;\;\; \text{for all } \,T\in \T_{{\ell-1},\omega_{J_i}}
 \end{equation}
  with
 \begin{equation}\label{Alpha-local}
 \alpha_{T,K}^{(J_i)}:=\int_{T}\mathcal{A}^{-1}_{h,\omega_{J_i}}\chi_{K}\dx.
 \end{equation}
This condition ensures the $a$-orthogonality of $\mathcal{E}_{\omega_{J_i}}(\bar{\psi}_{\ell,i})$, $\mathcal{E}_{\omega_{J_j}}(\bar{\psi}_{p,j})$, for $p\neq \ell$, whenever the support of the function from the higher level falls within the support of the one from the lower level.
\begin{remark}\label{rmk:a-Inner-Product-value}
If the functions $\hat{\varphi}_{\ell,i}$, $\hat{\varphi}_{p,j}$, with $p< \ell$ are such that the support of $\hat{\varphi}_{\ell,i}$ partially overlaps that of $\hat{\varphi}_{p,j}$, it follows that $\left(\hat{\varphi}_{p,j}\right)_{|_{\omega_{J_i}}}\notin H^{1}_{0}(\omega_{J_i})$, and consequently
\begin{equation*}
a(\hat{\varphi}_{\ell,i},\hat{\varphi}_{p,j})=\int_{\partial \omega_{J_i}}\hat{\varphi}_{p,j}\left(\mathbf{A}\nabla \hat{\varphi}_{\ell,i}\cdot \mathbf{n} \right)ds.
\end{equation*}
Since the basis functions of $\hat{\mathcal{B}}$ can be chosen to be uniformly bounded in the energy norm by construction, functions with partially overlapping support get closer to be $a$-orthogonal (i.e., the quantity $a(\hat{\varphi}_{\ell,i},\hat{\varphi}_{p,j})$ gets closer to zero) as $\mathbf{A}\nabla \hat{\varphi}_{\ell,i}\cdot \mathbf{n}$ decreases.
\end{remark}

Let $n_r=\#\T_{\ell-1,\omega_{J_i}}$, $n_c=\#\T_{\ell,\omega_{J_i}}$, and let $\mathbf{D}^{(\ell,J_i)}\in \mathbb{R}^{n_r \times n_c}$ be such that $\mathbf{D}^{(\ell,J_i)}_{q,s}=\alpha^{(J_i)}_{T_q,K_s}$, with $T_q\in \mathcal{T}_{\ell-1}$, $K_s \in \mathcal{T}_{\ell}$. Then equation (\ref{linsys-coeffs-local}) can be rewritten as
\begin{equation}\label{linsys-mat-form}
\mathbf{D}^{(\ell,J_i)}\mathbf{c}^{(\ell,i)}=\mathbf{0},
\end{equation}
which implies $\mathbf{c}^{(\ell,i)}\in \text{Ker}\left(\mathbf{D}^{(\ell,J_i)}\right)$. 

To complete the definition of our practical basis, it suffices to find a non-trivial $\mathbf{c}^{(\ell,i)}$ such that $\| \gamma_{\bar{\psi}_{\ell,i}(\cdot;\mathbf{c}^{(\ell,i)})}\|_{X'_{\omega_{J_i}}}$ is as small as possible (ideally zero), and then take $\hat{\varphi}_{\ell,i}=\mathcal{E}_{\omega_{J_i}}\left(\bar{\psi}_{\ell,i}(\cdot;\mathbf{c}^{(\ell,i)})\right)$ as the basis function.

Therefore, summarizing our previous discussion, we can define a more practical discrete solution space of (\ref{Continuous-Formulation}), henceforth denoted by $\hat{\mathcal{V}}_{L}$, as the span of the hierarchical basis  $\hat{\mathcal{B}}=\cup_{\ell=0}^{L} \hat{\mathcal{B}}_{\ell}$, where $\hat{\varphi}_{\ell,i}\in \hat{\mathcal{B}}_{\ell}$ is given by
\begin{equation*}
\hat{\varphi}_{\ell,i}=\mathcal{E}_{\omega_{J_i}}\left(\bar{\psi}_{\ell,i}(\cdot;\mathbf{c}^{(\ell,i)})\right),
\end{equation*}
with $\mathbf{c}^{(\ell,i)}\neq\mathbf{0}$ such that $\| \gamma_{\bar{\psi}_{\ell,i}(\cdot,\mathbf{c}^{(\ell,i)})}\|_{X'_{\omega_{J_i}}}$ is as small as possible, and with the additional condition for $\ell>0$ that $\mathbf{c}^{(\ell,i)}\in \text{Ker}\left(\mathbf{D}^{(\ell,J_i)}\right)$.

In practice, we observed that if we take the local coefficients
\begin{equation}\label{Original-Optim-problem}
\mathbf{c}^{(\ell,i)}=\argmin_{\substack{\mathbf{c}\in \mathbb{R}^{n_c},\\ \|\mathbf{c}\|_{\mathbf{M}}=1}} \frac{1}{2} \|\gamma_{\bar{\psi}_{i,\ell}}\|_{X_{\omega_{J_i}}'}^2,
\end{equation}
where $\|\mathbf{c}\|_{\mathbf{M}}=\sqrt{\mathbf{c}^T\mathbf{M}\mathbf{c}}$ for some SPD matrix $\mathbf{M}$ (e.g., the stiffness or mass matrix associated with $\{\mathcal{A}^{-1}_{h,\omega_{J_i}}\chi_{K}\}_{K\in T_{{\ell},\omega_{J_i}}}$, or the identity matrix), we end in some cases with a Riesz stability-deficient basis, i.e., the associated stiffness matrix is ill-conditioned regardless of the values of $\beta$ and $\alpha$ (cf. \Cref{Remark-basis-well-behavior-measure} below). To overcome this poor stability issue, we shall introduce in the next section a method inspired by the LOD method to obtain coefficients that allow the construction of a Riesz stable basis that preserves the smallness of the localization errors and their superexponential decay.


\section{Stable localization of basis functions}\label{Sec-Stable-corrections}
In this section, we present a localization strategy that leads to a practically stable basis. The method involves constructing a superlocalized basis (hereafter SLOD basis) at each level by correcting LOD basis functions, and then defining the hierarchical basis functions at that level (hereafter HSLOD basis functions) as a linear combination of selected SLOD basis functions. 
\subsection{A stable SLOD basis for level $\ell$}\label{Stable-SLOD-subsection}
In what follows, we derive a stable SLOD basis for a fixed level $\ell$ from the correction of local LOD basis functions. The patches defining the local LOD basis functions differ from those previously introduced in the definition of hierarchical basis functions in that, for all $\ell$, the patches defining LOD basis functions at level $\ell$ are unions of elements of $\mathcal{T}_{\ell}$. To make this difference clear, we shall denote the LOD patches with a tilde. Hence, we define recursively the $m$-th order LOD patch associated with $T_j\in \mathcal{T}_{\ell}$ as
\begin{equation*}
\widetilde{\omega}_{j}^{(\ell,m)}=\bigcup \{T\in \mathcal{T}_{{\ell}}: T\cap \widetilde{\omega}^{(\ell,m-1)}_{j}\neq \emptyset\} \quad \mathrm{with}\quad \widetilde{\omega}^{(\ell,0)}_{j}=T_j.
\end{equation*}
{Note for $\ell>0$ that, as a difference from $\widetilde{\omega}_{j}^{(\ell,m)}$, the patch $\omega_{j}^{(\ell,m)}$ is constructed around an element of $\mathcal{T}_{\ell-1}$ with layers composed of elements of $\mathcal{T}_{\ell-1}$ instead of elements of $\mathcal{T}_{\ell}$, see \cref{fig:patches}.}
\begin{figure}
	\centering
	\includegraphics[width=.35\linewidth,height=0.325\linewidth]{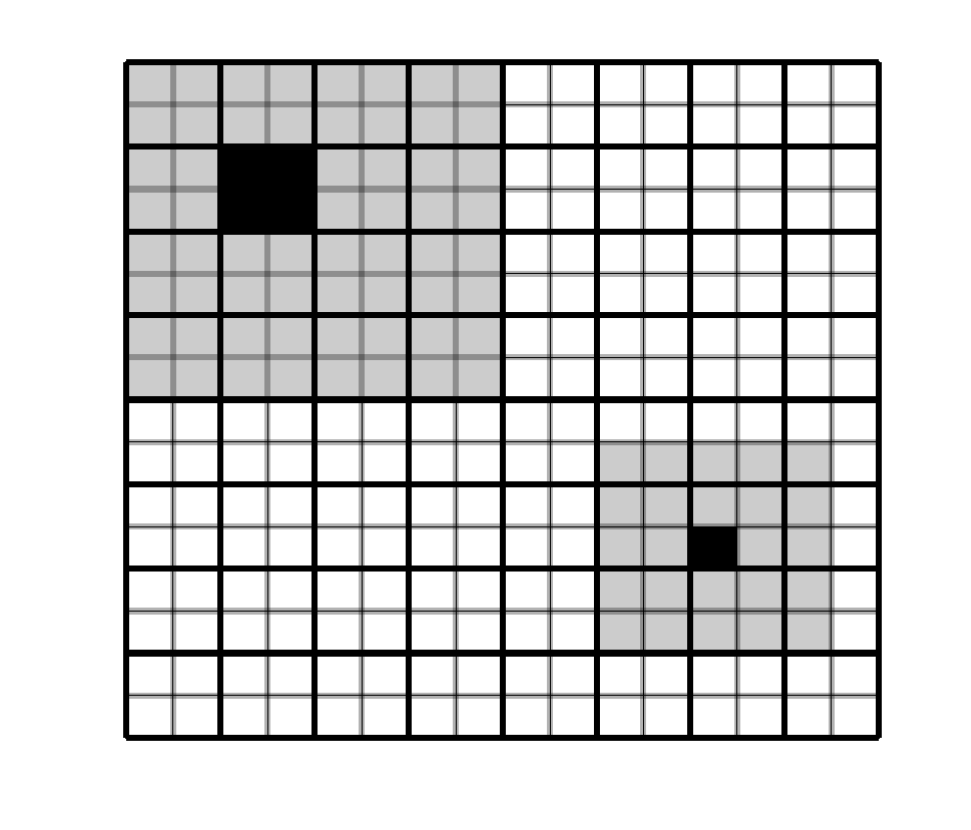}
	\caption{Illustration of the two second-order patches $\omega^{(\ell,2)}_T$ (top left) and  $\widetilde{\omega}^{(\ell,2)}_K$ on the mesh $\T_\ell$ for some $\ell>0$. The patches are centered around some mesh elements $T\in\T_{\ell-1}$ and $K\in\T_\ell$, respectively. Additionally, the coarser mesh $\T_{\ell-1}$ is depicted with bold lines.}
	\label{fig:patches}
\end{figure}

With fixed values of $\ell$, $m$, and $T=T_j\in \mathcal{T_{\ell}}$, let $\widetilde{\omega}:=\widetilde{\omega}_{j}^{(\ell,m)}$. Define $\Pi_{\ell,\tilde{\omega}}$ as the restriction of $\Pi_{\ell}$ to $\tilde{\omega}$.  Using the energy-minimization saddle-point formulation of the LOD method \cite{Mai20ppt,OwhS19}, we have for every $T\in \mathcal{T}_{\ell,\widetilde{\omega}}$ that 
\begin{equation}\label{LOD-Saddle-point-formulation}
\begin{pmatrix}
\mathcal{A}_{h,\widetilde{\omega}} & \mathcal{P}^T\\
\mathcal{P} & 0\\
\end{pmatrix}
\begin{pmatrix}
\bar{\psi}^{\text{LOD}}_{T,\ell}\\
\lambda
\end{pmatrix}= \begin{pmatrix}
0\\
\chi_{T}
\end{pmatrix},
\end{equation}
where $\mathcal{A}_{h,\widetilde{\omega}}:V_{h,\widetilde{\omega}}\rightarrow \left[V_{h,\widetilde{\omega}}\right]'$, $u \mapsto a_{\widetilde{\omega}}(u,\cdot)$, $\mathcal{P}:V_{h,\widetilde{\omega}}\rightarrow \mathbb{Q}^{0}(\mathcal{T}_{\ell,\widetilde{\omega}})$, $u \mapsto \Pi_{\ell,\widetilde{\omega}}u$, and $\mathcal{P}^{T}:\mathbb{Q}^{0}(\mathcal{T}_{\ell,\widetilde{\omega}})\rightarrow \left[V_{h,\widetilde{\omega}}\right]'$ such that $\langle \mathcal{P}^{T} p, v \rangle = (p,v)_{L^2(\widetilde{\omega})}$ for all $p\in \mathbb{Q}^{0}(\mathcal{T}_{\ell,\widetilde{\omega}})$ and $v\in V_{h,\widetilde{\omega}}$. Let $\mathcal{D}:\mathbb{Q}^{0}(\mathcal{T}_{\ell,\widetilde{\omega}})\rightarrow \mathbb{Q}^{0}(\mathcal{T}_{\ell,\widetilde{\omega}})$ such that $\mathcal{D}=\mathcal{P}\circ \mathcal{A}_{h,\widetilde{\omega}}^{-1}\circ \mathcal{P}^T$. It can be shown that $\mathcal{D}$ is invertible; see \cite{HaPe21b}. Thus, eliminating $\lambda$ in (\ref{LOD-Saddle-point-formulation}) and solving for the local LOD basis function $\psi^{\text{LOD}}_{T_i,\ell}$ yields
\begin{equation*}
\bar{\psi}^{\text{LOD}}_{T,\ell}=\mathcal{A}^{-1}_{h,\widetilde{\omega}}g_{T,\ell},
\end{equation*}
where $g_{T,\ell} =\mathcal{D}^{-1}\chi_{T}$.

For $K\in \mathcal{T}_{\ell,\widetilde{\omega}}\setminus \{T\}$, let $\bar{\psi}_{K,\ell}^{(T)}:=\mathcal{A}^{-1}_{h,\tilde{\omega}}g_{K,\ell}$, where $g_{K,\ell}=\mathcal{D}^{-1}\chi_{K}$. With $\mathbf{c}=(c_K)_{K\in \mathcal{T}_{{\ell},\widetilde{\omega}}\setminus\{T\}}$, define $\Psi_{\mathbf{c}}^{(T)}\in V_{h,\widetilde{\omega}}$ as
\begin{equation*}
\Psi_{\mathbf{c}}^{(T)}:=\sum_{K\in \mathcal{T}_{{\ell},\widetilde{\omega}}\setminus\{T\}}c_K \bar{\psi}_{K,\ell}^{(T)}=\mathcal{A}^{-1}_{h,\widetilde{\omega}}\sum_{K\in \mathcal{T}_{{\ell},\widetilde{\omega}}\setminus\{T\}}c_K g_{K,\ell}.
\end{equation*}
Let $n_e=\#T_{{\ell},\widetilde{\omega}}$. We want to find $\bar{\mathbf{c}}\in \mathbb{R}^{n_e-1}$ such that $\|\gamma_{\bar{\psi}^{\text{LOD}}_{T,\ell}+\Psi_{\bar{\mathbf{c}}}^{(T)}}\|_{X'_{\widetilde{\omega}}}$ is as small as possible (ideally zero), and then define the (normalized) SLOD basis function at level $\ell$ associated with the patch around $T$ as
\begin{equation*}
\hat{\theta}^{\text{SLOD}}_{T,\ell}=\frac{\mathcal{E}_{\widetilde{\omega}}\left(\bar{\psi}^{\text{LOD}}_{T,\ell}+\Psi_{\bar{\mathbf{c}}}^{(T)}\right)}{\|\bar{\psi}^{\text{LOD}}_{T,\ell}+\Psi_{\bar{\mathbf{c}}}^{(T)}\|_{a_{\widetilde{\omega}}}}\cdot
\end{equation*}
To make $\|\gamma_{\bar{\psi}^{\text{LOD}}_{T,\ell}+\Psi_{\mathbf{c}}^{(T)}}\|_{X'_{\widetilde{\omega}}}$ small, we could use a similar technique as in \cite{BHP22} to compute the $\|\cdot\|_{X'_{\widetilde{\omega}}}$ norm, and then minimize it over the coefficients $\mathbf{c}\in \mathbb{R}^{n_e-1}$ to obtain $\bar{\mathbf{c}}$. However, for computational-cost efficiency, and based on \eqref{ConormalDer-Norm-definition}, we rather seek $\bar{\mathbf{c}}\in \mathbb{R}^{n_e-1}$ such that 
\begin{equation}\label{Weekened-boundary-condition}
\sum_{i\in I_{\Sigma_{\widetilde{\omega}}}}\left\langle \gamma_{\bar{\psi}^{\text{LOD}}_{T,\ell}+\Psi_{\bar{\mathbf{c}}}^{(T)}},\mathrm{tr}_{\Sigma_{\widetilde{\omega}}}\phi_i \right\rangle ^2\leq\epsilon
\end{equation}
for a small $\epsilon\geq 0$, where $\Sigma_{\widetilde{\omega}}:=\partial \widetilde{\omega}\setminus \partial \Omega$, $I_{\Sigma_{\widetilde{\omega}}}:=\{i\in \mathbb{N}:x_i\in \Sigma_{\widetilde{\omega}}\}$, $x_i$ is the $i$-th nodal point associated with $\mathcal{T}_{h,\widetilde{\omega}}$, and $\phi_i$ is the $i$-th $\mathcal{Q}_{1}$ standard basis function of $V_h$.

\begin{remark}
Optimizing $\|\gamma_{\bar{\psi}^{\text{LOD}}_{T,\ell}+\Psi_{\mathbf{c}}^{(T)}}\|_{X'_{\widetilde{\omega}}}$ over $\mathbf{c}\in \mathbb{R}^{n_e-1}$ is similar to the optimization problem \eqref{Original-Optim-problem} in the sense that both problems seek to find coefficients that lead to functions with small conormal derivatives. However, they are not the same optimization problem since they have different constraints. Furthermore, the admissible sets of both optimization problems are vector spaces with different dimensions.
\end{remark}

\begin{remark}
In what follows, the parenthetical $T$ in a superscript refers to a mesh element $T$. Whenever $T$ appears without parenthesis in a superscript, it denotes the transpose sign.
\end{remark}

With $n_b=\#I_{\Sigma_{\widetilde{\omega}}}$, define the matrix $\mathbf{B}\in \mathbb{R}^{n_{b} \times n_e}$ such that
\begin{equation}\label{Bmatrix-TraceCondition}
\mathbf{B}_{ij} = a_{\widetilde{\omega}}(\mathcal{A}^{-1}_{\widetilde{\omega}}\chi_{T_j},\phi_i)-(\chi_{T_j},\phi_i)_{L^2(\widetilde{\omega})}.
\end{equation}
Let $g_{\tau,\ell}=\sum_{K\in \mathcal{T}_{{\ell,\widetilde{\omega}}}}d_{K}^{(\tau)}\chi_K$, with $\tau \in \mathcal{T}_{\ell,\widetilde{\omega}}$, $\mathbf{d}^{(\tau)}=\big(d_K^{(\tau)}\big)_{K\in \mathcal{T}_{{\ell},\widetilde{\omega}}}$, and $\mathbf{D}\in \mathbb{R}^{n_e \times (n_e-1)}$ such that the $j$-th column of $\mathbf{D}$ is $\mathbf{d}^{(\tau_j)}$, with $\tau_j\in \mathcal{T}_{{\ell},\widetilde{\omega}}\setminus\{T\}$. Then, the $\bar{\mathbf{c}}$ providing the smallest $\epsilon$ in (\ref{Weekened-boundary-condition}) is the least-squares-error solution of
\begin{equation}\label{linear-system-solved-by-least-squares}
\mathbf{B}\mathbf{D}\mathbf{c}=-\mathbf{B}\mathbf{d}^{\left(T\right)}
\end{equation}
i.e.,
\begin{equation}\label{c-lsq-solution}
\bar{\mathbf{c}}=-\left((\mathbf{B}\mathbf{D})^{T}\mathbf{B}\mathbf{D}\right)^{-1}(\mathbf{B}\mathbf{D})^{T}\mathbf{B}\mathbf{d}^{\left(T\right)}.
\end{equation}
Note that, to guarantee stability of the SLOD basis, we additionally want $\Psi_{\bar{\mathbf{c}}}$ to be such that
\begin{equation}\label{SLOD-stability-condition}
\Big\|\frac{\Pi_{{\ell}}\hat{\theta}^{\text{SLOD}}_{T,\ell}}{z_{T}}-\chi_{T}\Big\|_{L^{\infty}(\Omega)}\leq\delta_s,
\end{equation} 
with
\begin{equation}\label{SLOD-stability-condition-weight}
z_{T}=\frac{\big(\Pi_{{\ell}}\hat{\theta}^{\text{SLOD}}_{T,\ell},\chi_{T}\big)_{L^2(\widetilde{\omega})}}{\left(\chi_{T},\chi_{T}\right)_{L^2(\widetilde{\omega})}}
\end{equation}
and $\delta_s\geq 0$ small (in practice, we observed that taking $\delta_s\leq0.5$ is enough to obtain basis stability). Note that condition \eqref{SLOD-stability-condition} impels the support of $\hat{\theta}^{\text{SLOD}}_{T,\ell}$ to be reasonably concentrated around $T$, a favorable property for basis stability; see Appendix \ref{Appendix-SLOD-Stability} for a more detailed explanation of why condition \eqref{SLOD-stability-condition} ensures basis stability. Now, in practice it is observed that choosing $\bar{\mathbf{c}}$ as in (\ref{c-lsq-solution}) does not always satisfy condition (\ref{SLOD-stability-condition}). Hence, for the sake of stability, we choose $\bar{\mathbf{c}}$ instead as follows. Expressing $(\mathbf{B}\mathbf{D})^{T}\mathbf{B}\mathbf{D}$ in terms of its singular value decomposition we have
\begin{equation}\label{svd-computation}
(\mathbf{B}\mathbf{D})^{T}\mathbf{B}\mathbf{D}=\sum_{i=1}^{r}\sigma_i u_i v_i^T.
\end{equation}
where $\sigma_i$ is the $i$-th singular value, with $\sigma_1\geq\ldots \geq\sigma_r$, $u_i$ is the $i$-th left singular vector, $v_i$ is the $i$-th right singular vector, and $r$ is the rank of $(\mathbf{B}\mathbf{D})^{T}\mathbf{B}\mathbf{D}$. Then, we can make a stable choice of $\bar{\mathbf{c}}$ by taking 
\begin{equation}\label{stable_c_computation}
\bar{\mathbf{c}}_{s}=-\Big(\sum_{i=1}^{r_s^{(T)}}\sigma_i^{-1} v_i u_i^T\Big)(\mathbf{B}\mathbf{D})^{T}\mathbf{B}\mathbf{d}^{\left(T\right)},
\end{equation}
where $r_s^{(T)}$ is chosen so that condition (\ref{SLOD-stability-condition}) holds. Note that taking $r_s^{(T)}=0$ for all $T\in \mathcal{T}_{{\ell}}$ yields the LOD basis, which is known to be Riesz stable (see, e.g., \cite{HaPe21b}). The LOD basis functions equivalently satisfy the stability condition (\ref{SLOD-stability-condition}) exactly with $\delta_s=0$, by definition. Therefore, there always exists at least one value of $r_s^{(T)}$ for which a stable basis can be obtained using this stabilization procedure. However, if $r_s^{(T)}$ is too small, the superexponential decay of the basis functions might be lost, as expected. Nevertheless, even in such cases, the basis functions still exhibit exponentially decaying properties. Hence, we want to choose $\delta_s$ just small enough to achieve basis stability and such that $r_s^{(T)}$ remains sufficiently large to preserve the superlocalization properties. The value of $r_s^{(T)}$ is obtained by an iterative process which involves discarding the smallest singular value $\sigma_i$ in (\ref{stable_c_computation}) at each iteration until condition (\ref{SLOD-stability-condition}) is satisfied.  

Before moving to the construction of HSLOD basis functions, we derive an estimate for the localization error of SLOD functions, which will be instrumental in the estimation of the localization error of HSLOD functions. To that end, first note from the definition of $\hat{\theta}^{\text{SLOD}}_{\ell,T}$ that we can write
\begin{equation*}
\hat{\theta}^{\text{SLOD}}_{\ell,T}=\mathcal{E}_{\widetilde{\omega}}\big(\mathcal{A}^{-1}_{h,\widetilde{\omega}}g^{\text{SLOD}}_{\ell,T}\big),
\end{equation*}
with $g^{\text{SLOD}}_{\ell,T}\in\mathbb{Q}^{0}(\mathcal{T}_{{\ell},\widetilde{\omega}})$. Define the counterpart of $\hat{\theta}^{\text{SLOD}}_{\ell,T}$ belonging to $\mathcal{V}_{\ell}$ as
\begin{equation}\label{SLOD-global-counterpart}
\theta_{\ell,T}=\mathcal{A}^{-1}_{h}\mathcal{E}_{\widetilde{\omega}}\big(g^{\text{SLOD}}_{\ell,T}\big).
\end{equation}
Then, based on the definitions of $\hat{\theta}^{\text{SLOD}}_{\ell,T}$, $\theta_{\ell,T}$, and Lemma \ref{Lemma-Localization-Error-bound-conormalDer}, it follows that
\begin{equation}\label{candidate-local-error-estimate}
\big\|\theta_{\ell,T}-\hat{\theta}^{\text{SLOD}}_{\ell,T}\big\|_{a}\leq \frac{1+\frac{\mathrm{diam}(\Omega)}{\pi}}{\sqrt{\alpha}} \big\|\gamma_{\hat{\theta}^{\text{SLOD}}_{\ell,T}}\big\|_{X'_{\widetilde{\omega}}}.
\end{equation}
\subsection{HSLOD basis functions at level $\ell$ leading to a Riesz stable basis}
Having computed the SLOD basis functions $\hat{\theta}^{\text{SLOD}}_{T,\ell}$ at level $\ell$, we can obtain HSLOD basis functions at the same level from their linear combination. For $\ell=0$, we simply take $\hat{\varphi}^{\text{HSLOD}}_{\ell,i}=\varphi^{\text{SLOD}}_{\ell,T_i}$ for all $i\in\{1,\ldots,N_0\}$. For $\ell>0$, we first define the set of descendants of an element $T\in \mathcal{T}_{{\ell-1}}$, obtained by refining $T$, as
\begin{equation*}
\text{ref}(T):=\{\tau\in \mathcal{T}_{{\ell}}: \tau \subset T\},
\end{equation*}
and let $\widetilde{\text{ref}}(T)$ be any subset of $\text{ref}(T)$ such that $\#\widetilde{\text{ref}}(T)=2^d-1$. Let $i\in\{1,\dots,(2^d-1)N_{\ell-1}\}$ and recall that $J_i:=\left\lfloor \frac{i-1} {(2^d-1)^{\min\{\ell,1\}}} \right \rfloor +1$. With fixed $\ell>0$ and $m\geq 1$, consider $\omega_{J_i}=\omega^{(\ell,m)}_{J_i}$, where $\omega^{(\ell,m)}_{J_i}$ is defined as in Section \ref{Sec-Construction}.  Then, define
\begin{equation}\label{HSLOD-basis-function-definition}
\hat{\varphi}^{\text{HSLOD}}_{\ell,i}=\sum_{T\in S_{\omega_{J_i}}}d_T^{(\ell,i)} \hat{\theta}_{\ell,T}^{\text{SLOD}},
\end{equation}
where $S_{\omega_{J_i}}:=\{T \in \mathcal{T}_{{\ell},\omega_{J_i}}:\hat{\theta}^{\text{SLOD}}_{T,\ell}=0 \text{ in } \Omega\setminus \omega_{J_i}\}$, and the non-trivial coefficients $(d_T)_{T\in S_{\omega_{J_i}}}$ are such that
\begin{equation}\label{Projection1-pw-for-Aorthogonality}
\Pi_{{\ell-1}}\hat{\varphi}_{\ell,i}^{\text{HSLOD}}=0.
\end{equation}
From the definition of $S_{\omega_{J_i}}$, it follows that $\hat{\varphi}_{\ell,i}^{\text{HSLOD}}$ is supported in $\omega_{J_i}$. Additionally, since the conormal derivative of $(\hat{\theta}_{\ell,T}^{\text{SLOD}})_{|_{\omega_{J_i}}}$ is small on $\Sigma_{\omega_{J_i}}=\partial \omega_{J_i}\setminus \partial \Omega$ for $T\in S_{\omega_{J_i}}$,  the conormal derivative of $(\hat{\varphi}^{\text{HSLOD}}_{\ell,i})_{|_{\omega_{J_i}}}$ will also be small on $\Sigma_{\omega_{J_i}}$ provided the coefficients $d^{(\ell,i)}_{T}$ associated with the elements $T\in S_{\omega_{J_i}}$ closest to $\Sigma_{\omega_{J_i}}$ are small enough. Further, $\big(\hat{\varphi}_{\ell,i}^{\text{HSLOD}}\big)_{|_{\omega_{J_i}}}\in \text{Ker}(\Pi_{{\ell-1},\omega_{J_i}})$, and thus condition (\ref{Projection1-pw-for-Aorthogonality}) implies the $a$-orthogonality condition (\ref{linsys-mat-form}).  
Note that $\text{dim}(S_{\omega_{J_i}})> \# \T_{{\ell-1},\omega_{J_i}}$. Hence, there are infinitely many choices of $\big(d_T^{(\ell,i)}\big)_{T\in S_{\omega_{J_i}}}$ such that (\ref{Projection1-pw-for-Aorthogonality}) holds. However, not all options will contribute to a stable HSLOD basis. To guarantee stability, we shall choose basis functions that are concentrated in different elements of the refined mesh $\mathcal{T}_{\ell}$. Therefore, define 
\begin{equation*}
\tilde{S}:=\{\tau\in \widetilde{\text{ref}}(T):T \in \mathcal{T}_{{\ell-1},\omega_{J_i}}\},
\end{equation*}
and take $\widetilde{\Pi}_{{\ell}}: H^{1}_0(\omega_{J_i})\rightarrow \mathbb{Q}^{0}(\tilde{S})$ as the $L^2$-projection onto $\mathbb{Q}^{0}(\tilde{S})$. 
Then, to obtain a stable HSLOD basis, we choose the coefficients $(d_T)_{T\in S_{\omega_{J_i}}}$ such that, in addition to (\ref{Projection1-pw-for-Aorthogonality}), $\hat{\varphi}_{\ell,i}^{\text{HSLOD}}$ satisfies
\begin{equation}\label{HSLOD-Stability-Practical-Condition}
\widetilde{\Pi}_{{\ell}}\hat{\varphi}^{\text{HSLOD}}_{\ell,i}=\chi_{T_i},
\end{equation}
where $T_i\in \widetilde{\text{ref}}(T_{J_i})$, and $T_{J_i}\in \mathcal{T}_{\ell-1,\omega_{J_i}}$ is the element around which $\omega_{J_i}$ is defined. Note that after enforcing condition (\ref{Projection1-pw-for-Aorthogonality}), we are left with $\text{dim}(\text{Ker}(\Pi_{{\ell-1},\omega_{J_i}}))=\# S_{\omega_{J_i}}-\# \T_{{\ell-1},\omega_{J_i}}$ degrees of freedom. Since $\text{dim}(\text{Ker}(\Pi_{{\ell-1},\omega_{J_i}}))\leq\#\tilde{S}$, this implies that condition (\ref{HSLOD-Stability-Practical-Condition}) cannot generally be satisfied exactly (except for the 1D case). Consequently, we select the coefficients $(d_T)_{T\in S_{\omega_{J_i}}}$ so that this condition (in its algebraic version) is satisfied in the least squares errors sense. Observe that condition \eqref{HSLOD-Stability-Practical-Condition} not only ensures that basis functions supported on the same patch are not concentrated in the exact same regions but also impels the coefficients $d^{(\ell,i)}_{T}$ to be small for $T\in S_{\omega_{J_i}}$ close to $\Sigma_{\omega_{J_i}}$.

\begin{remark}\label{Gram-Schmidt-local-orthogonalization}
To increase the degree of numerical linear independence among the $2^d-1$ HSLOD basis functions associated with a given patch $\omega_{J_i}$, after computing their $2^d-1$ corresponding coefficient vectors resulting from the application of conditions (\ref{Projection1-pw-for-Aorthogonality}) and (\ref{HSLOD-Stability-Practical-Condition}), we locally orthogonalize them (with respect to the Euclidean inner product) and take this locally orthogonal set of vectors as the new set of coefficient vectors. In practice, we observed that this additional step enhances the condition number of the HSLOD stiffness matrix.
\end{remark}

Define the normalized hierarchical basis function as
\begin{equation}\label{HSLOD-function-normalized}
\hat{\varphi}_{\ell,i}=\varphi^{\text{HSLOD}}_{\ell,i}/\|\varphi^{\text{HSLOD}}_{\ell,i}\|_a.
\end{equation}
Observe that, in general, $\textbf{span}\{\hat{\varphi}_{\ell,i}: 0\leq \ell \leq L, 1\leq i \leq \#\mathcal{B}_{\ell}\}=\hat{\mathcal{V}}_{L}\neq \mathcal{V}_{L}$. Let $\hat{d}_{T}^{(\ell,i)}=d_{T}^{(\ell,i)}/\|\varphi^{\text{HSLOD}}_{\ell,i}\|_a$ so that $\hat{\varphi}_{\ell,i}=\sum_{T\in S_{\omega}}\hat{d}_T^{(\ell,i)} \hat{\theta}_{\ell,T}^{\text{SLOD}}$. Then, with the same coefficients $\big(\hat{d}_{T}^{(\ell,i)}\big)_{T\in\mathcal{T}_{{\ell},\omega}}$ as those defining $\hat{\varphi}_{\ell,i}$, a basis of $\mathcal{V}_{L}$ can be obtained by taking
\begin{equation}\label{Counterpart-HSLOD-global-basis-func}
\varphi_{\ell,i}=\sum_{T\in S_\omega}\hat{d}_{T}^{(\ell,i)}\theta_{\ell,T}.
\end{equation}
As we will see later, these globally-supported functions are a helpful tool for evaluating how good $\hat{\mathcal{V}}_{L}$ is as an approximate space for the solution of (\ref{Continuous-Formulation}). 

Before presenting a lemma that provides a way to measure how well $\hat{\mathcal{V}}_{L}$ preserves the approximation properties of $\mathcal{V}_{L}$ as a discrete solution space for (\ref{Continuous-Formulation}), we state a result that will be useful to bound the $2$-norm of basis functions coefficient vectors in subsequent error estimates. The result follows from the Rayleigh quotient bounds of the Gram matrix of a basis.

\begin{lemma}\label{Riesz-basis}
Let $\mathcal{B}=\{b_{i}\}_{i\in\{1,\ldots,n\}}$ be a basis of an inner product space $\mathcal{V}$ with norm $\|\cdot\|_{\mathcal{V}}$ induced by the inner product $(\cdot,\cdot)_{\mathcal{V}}$. Then $\{b_{i}\}_{i\in\{1,\ldots,n\}}$ is a Riesz basis, i.e., there exists constants $0 \leq C_1 \leq C_2$ s.t. for any finite sequence of real numbers $(c_i)_{i\in\{1,\ldots,n\}}$, we have
\begin{equation*}
C_1 \sum_{i=1}^{n}|c_i|^2\leq \left\|\sum_{i=1}^{n}c_i b_{i}\right\|_{\mathcal{V}}^2\leq C_2 \sum_{i=1}^{n}|c_i|^2.
\end{equation*}
Moreover, the bounds are tight by taking $C_1$ and $C_2$ as the smallest and largest eigenvalues of the basis Gram matrix $\widetilde{\mathbf{B}}\in \mathbb{R}^{n \times n}$ s.t.\ $\widetilde{\mathbf{B}}_{ij}=(b_{i},b_{j})_{\mathcal{V}}$.
\end{lemma}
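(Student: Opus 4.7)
The plan is to reduce the stated inequality to a standard Rayleigh quotient estimate for the symmetric positive definite Gram matrix $\widetilde{\mathbf{B}}$ associated with the basis $\mathcal{B}$. First, I would expand the squared norm using bilinearity of the inner product: for $\mathbf{c}=(c_1,\dots,c_n)^T \in \mathbb{R}^n$,
\begin{equation*}
\left\|\sum_{i=1}^{n}c_i b_{i}\right\|_{\mathcal{V}}^2
= \sum_{i=1}^{n}\sum_{j=1}^{n} c_i c_j (b_i,b_j)_{\mathcal{V}}
= \mathbf{c}^T \widetilde{\mathbf{B}}\, \mathbf{c},
\end{equation*}
so that the claim becomes a statement about $\mathbf{c}^T \widetilde{\mathbf{B}}\mathbf{c}$ versus $\|\mathbf{c}\|_2^2 = \sum_i |c_i|^2$.

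Next, I would verify that $\widetilde{\mathbf{B}}$ is symmetric positive definite. Symmetry is immediate from symmetry of $(\cdot,\cdot)_{\mathcal{V}}$. Positive definiteness follows from the assumption that $\mathcal{B}$ is a basis (hence linearly independent): for any nonzero $\mathbf{c}$, the combination $\sum_i c_i b_i$ is a nonzero element of $\mathcal{V}$, and since $(\cdot,\cdot)_{\mathcal{V}}$ is a genuine inner product, $\mathbf{c}^T \widetilde{\mathbf{B}}\mathbf{c} = \|\sum_i c_i b_i\|_{\mathcal{V}}^2 > 0$. Consequently all eigenvalues of $\widetilde{\mathbf{B}}$ are strictly positive, and there is a complete orthonormal system of eigenvectors.

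Now I would invoke the classical Rayleigh quotient bounds for symmetric matrices: letting $C_1=\lambda_{\min}(\widetilde{\mathbf{B}})$ and $C_2=\lambda_{\max}(\widetilde{\mathbf{B}})$,
\begin{equation*}
C_1 \|\mathbf{c}\|_2^2 \;\leq\; \mathbf{c}^T \widetilde{\mathbf{B}}\,\mathbf{c} \;\leq\; C_2 \|\mathbf{c}\|_2^2
\quad\text{for all } \mathbf{c}\in\mathbb{R}^n,
\end{equation*}
which, combined with the identity from the first step, yields the desired two-sided inequality. Tightness of the constants follows by inserting the eigenvectors associated with $\lambda_{\min}$ and $\lambda_{\max}$ as the coefficient vector $\mathbf{c}$, in which case equality holds on one side.

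There is really no deep obstacle here: the statement is essentially a renaming of the spectral theorem for the (finite-dimensional) Gram operator. The only subtlety worth flagging in the write-up is the need to make the positive definiteness of $\widetilde{\mathbf{B}}$ explicit (so that $C_1>0$, giving the Riesz lower bound rather than just a semidefinite estimate), which is exactly where the basis hypothesis enters.
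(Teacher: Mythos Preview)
Your proposal is correct and matches the paper's approach exactly: the paper does not write out a proof but simply remarks that the result follows from the Rayleigh quotient bounds of the Gram matrix, which is precisely the argument you outline. Your additional observation that linear independence gives $C_1>0$ is a welcome clarification that the paper leaves implicit.
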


From \Cref{Riesz-basis}, and since $\|\hat{\varphi}_{\ell,i}\|_a=1$, the coefficients $\big(\hat{d}_T^{(\ell,i)}\big)_{T\in S_{\omega}}=~\hat{\mathbf{d}}^{(\ell,i)}$ defining $\hat{\varphi}_{\ell,i}$ are such that
\begin{equation*}
\|\hat{\mathbf{d}}^{(\ell,i)}\|_2\leq \frac{1}{\sqrt{\lambda_{\min}\big(\hat{\mathbb{A}}^{(\omega_{J_i})}\big)}}=\sqrt{\frac{\kappa\big(\hat{\mathbb{A}}^{(\omega_{J_i})}\big)}{\lambda_{\max}\big(\hat{\mathbb{A}}^{(\omega_{J_i})}\big)}},
\end{equation*}
where $\hat{\mathbb{A}}^{(\omega_{J_i})}\in \mathbb{R}^{n_e \times n_e}$ with $n_e=\# S_{\omega_{J_i}}$ is such that $\hat{\mathbb{A}}^{(\omega_{J_i})}_{sn}=a(\hat{\theta}^{\text{SLOD}}_{\ell,T_s},\hat{\theta}^{\text{SLOD}}_{\ell,T_n})$, with $\hat{\theta}^{\text{SLOD}}_{\ell,T_s},\hat{\theta}^{\text{SLOD}}_{\ell,T_n}$ supported in $\omega_{J_i}$. 

Furthermore, since $a(\hat{\theta}^{\text{SLOD}}_{\ell,T_s},\hat{\theta}^{\text{SLOD}}_{\ell,T_n})=1$ if $s=n$ and $a(\hat{\theta}^{\text{SLOD}}_{\ell,T_s},\hat{\theta}^{\text{SLOD}}_{\ell,T_n})\leq1$ if $s\neq n$, by construction, which implies that $1\leq \lambda_{\max}\big(\hat{\mathbb{A}}^{(\omega_{J_i})}\big)<n_e$ (using the Rayleigh quotient and Gershgorin Circle Theorem), and assuming $\delta_s$ in \eqref{SLOD-stability-condition} is small enough so that $\hat{\mathbb{A}}^{(\omega_{J_i})}$ is well-conditioned, it follows that $\|\hat{\mathbf{d}}^{(\ell,i)}\|_2$ is small.

Define
\begin{equation}\label{zeta-definition}
\zeta:=\max_{\ell\in \{0,\ldots,L\}}\max_{i\in\{1,\ldots,N^{b}_{\ell}\}} \|\hat{\mathbf{d}}^{(\ell,i)}\|_2.
\end{equation}
Also, let
\begin{equation}\label{sigma-def}
\sigma_{k,\ell}:=\max_{T_p\in S_{\omega_k}}\big\|\gamma_{\hat{\theta}^{\text{SLOD}}_{\ell,T_p}}\big\|_{X'_{\widetilde{\omega}_p}} \quad \text{ and }\quad \sigma:=\max_{\ell\in \{0,\ldots,L\}}\max_{k\in\{1,\ldots,N_{\ell-1}\}} \sigma_{k,\ell}.
\end{equation}

The following lemma presents an estimate of the localization error in the energy norm of HSLOD basis functions, giving a way to measure how good $\hat{\mathcal{V}}_{L}$ is as a substitute for $\mathcal{V}_{L}$.

\begin{lemma}\label{Lemma-error-estimate-localization-more-stable}
Let $\hat{\varphi}_{\ell,i}$ and $\varphi_{\ell,i}$ be the basis functions of $\hat{\mathcal{V}}_{L}$ and $\mathcal{V}_{L}$, respectively, defined in (\ref{HSLOD-function-normalized}) and (\ref{Counterpart-HSLOD-global-basis-func}). Then the following estimate holds.
\begin{equation*}
\|\varphi_{i,\ell}-\hat{\varphi}_{i,\ell}\|_{a}\leq \frac{\big(1+\frac{\mathrm{diam}(\Omega)}{\pi}\big)\sqrt{N_E}}{\sqrt{\alpha}}\zeta \sigma,
\end{equation*}
where $\sigma$ is given in (\ref{sigma-def}), $\zeta$ in (\ref{zeta-definition}), and $N_E$  is the largest number of elements that can possibly be contained within the supporting patches of basis functions.
\end{lemma}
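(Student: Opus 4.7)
The plan is to exploit the linearity of the definitions \eqref{HSLOD-function-normalized} and \eqref{Counterpart-HSLOD-global-basis-func}: since both $\hat\varphi_{\ell,i}$ and $\varphi_{\ell,i}$ are built from the \emph{same} coefficient vector $\hat{\mathbf d}^{(\ell,i)}$, their difference factors as
\[
\varphi_{\ell,i}-\hat\varphi_{\ell,i}=\sum_{T\in S_{\omega_{J_i}}}\hat d_T^{(\ell,i)}\bigl(\theta_{\ell,T}-\hat\theta^{\text{SLOD}}_{\ell,T}\bigr),
\]
so the task reduces to controlling a weighted sum of SLOD localization errors.

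First I would apply the triangle inequality in the energy norm to pull the sum outside and then invoke the discrete Cauchy--Schwarz inequality to separate coefficients from errors:
\[
\|\varphi_{\ell,i}-\hat\varphi_{\ell,i}\|_a\;\le\;\Bigl(\sum_{T\in S_{\omega_{J_i}}}|\hat d_T^{(\ell,i)}|^2\Bigr)^{\!1/2}\Bigl(\sum_{T\in S_{\omega_{J_i}}}\|\theta_{\ell,T}-\hat\theta^{\text{SLOD}}_{\ell,T}\|_a^2\Bigr)^{\!1/2}.
\]
The first factor is at most $\|\hat{\mathbf d}^{(\ell,i)}\|_2\le\zeta$ by the definition \eqref{zeta-definition}. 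For the second factor I would substitute the per-function SLOD localization estimate \eqref{candidate-local-error-estimate}, yielding a sum of squared conormal-derivative norms $\|\gamma_{\hat\theta^{\text{SLOD}}_{\ell,T}}\|_{X'_{\widetilde{\omega}_T}}^2$. Each term is bounded by $\sigma^2$ via \eqref{sigma-def}, and the number of nonzero contributions equals $\#S_{\omega_{J_i}}$, which is in turn dominated by the largest number of mesh elements inside any supporting patch, i.e.\ $N_E$.

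Collecting constants, I would obtain
\[
\|\varphi_{\ell,i}-\hat\varphi_{\ell,i}\|_a\;\le\;\zeta\cdot\frac{1+\mathrm{diam}(\Omega)/\pi}{\sqrt{\alpha}}\cdot\sqrt{N_E}\,\sigma,
\]
which is exactly the claimed bound. The argument is essentially book-keeping once the right ingredients are aligned; the only subtle point is confirming that the cardinality of $S_{\omega_{J_i}}$ is indeed controlled uniformly by $N_E$ (which is built into the definition of $N_E$ as the largest number of elements in a supporting patch), and that \eqref{candidate-local-error-estimate} may be applied for each summand without additional regularity. I do not anticipate a genuine obstacle: the estimate is a linear superposition of Lemma~\ref{Lemma-Localization-Error-bound-conormalDer} applied to the SLOD building blocks, with Cauchy--Schwarz providing the $\sqrt{N_E}$ factor and the Riesz-stability-derived bound providing $\zeta$.
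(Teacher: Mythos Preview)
Your proposal is correct and essentially matches the paper's own proof: both write $\varphi_{\ell,i}-\hat\varphi_{\ell,i}=\sum_{T\in S_{\omega_{J_i}}}\hat d_T^{(\ell,i)}(\theta_{\ell,T}-\hat\theta^{\text{SLOD}}_{\ell,T})$, apply the triangle inequality together with Cauchy--Schwarz, and then insert \eqref{candidate-local-error-estimate}, \eqref{sigma-def}, \eqref{zeta-definition}, and $\#S_{\omega_{J_i}}\le N_E$. The only cosmetic difference is that the paper first bounds by $\max_T\|\theta_{\ell,T}-\hat\theta^{\text{SLOD}}_{\ell,T}\|_a\cdot\sum_T|\hat d_T|$ and then applies Cauchy--Schwarz to the $\ell^1$-sum of coefficients (so the $\sqrt{N_E}$ arises from the coefficient side), whereas you apply Cauchy--Schwarz directly in $\ell^2\times\ell^2$ (so the $\sqrt{N_E}$ arises from the error side); the resulting bound is identical.
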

\begin{proof}
From (\ref{HSLOD-function-normalized}), (\ref{Counterpart-HSLOD-global-basis-func}), (\ref{candidate-local-error-estimate}), (\ref{sigma-def}), (\ref{zeta-definition}), the Cauchy-Schwarz inequality, and letting $\omega_{J_i}=\omega_{J_i}^{(\ell,m)}$ with $J_i=\left\lfloor \frac{i-1} {(2^d-1)^{\min\{\ell,1\}}} \right \rfloor +1$, it follows that
\begin{eqnarray}\label{Localization-error-more-stable-basis}
\nonumber \|\varphi_{i,\ell}-\hat{\varphi}_{i,\ell}\|_{a}&=&\Big\|\sum_{T\in S_{\omega_{J_i}}}\hat{d}_T\left(\theta_{\ell,T}-\hat{\theta}^{\text{SLOD}}_{\ell,T}\right)\Big\|_{a}\\
\nonumber &\leq & \max_{T\in S_{\omega_{J_i}} } \left\{\big\|\theta_{\ell,T}-\hat{\theta}^{\text{SLOD}}_{\ell,T}\big\|_{a}\right\} \sum_{T\in S_{\omega_{J_i}} }|\hat{d}_T|\\
\nonumber &\leq & \frac{1+\frac{\mathrm{diam}(\Omega)}{\pi}}{\sqrt{\alpha}}\max_{T\in S_{\omega_{J_i}}   } \left\{\big\|\gamma_{\hat{\theta}^{\text{SLOD}}_{\ell,T}}\big\|_{X'_{\omega_{J_i}}}\right\} \sqrt{|S_{\omega_{J_i}}|}\Big(\sum_{T\in S_{\omega_{J_i}}}\hat{d}_T^2\Big)^{\frac{1}{2}}\\
&\leq & \frac{\big(1+\frac{\mathrm{diam}(\Omega)}{\pi}\big)\sqrt{N_E}}{\sqrt{\alpha}}\zeta \sigma.
\end{eqnarray}
\end{proof}

{
\begin{remark}
Since non-trivial corrections are added to LOD basis functions so that the localization error of SLOD basis functions reduces with respect to the LOD counterpart, we have $\sigma\leq \sigma^{\text{LOD}}$, where $\sigma^{\text{LOD}}$ is the value of $\sigma$ in the LOD case. Note that $\sigma^{\text{LOD}}\leq C_{\dagger} H^{-1}_{L} e^{-C m}$ (see \cite{HaPe21b,AHP21,MaP14}), where $m$ is the patch order and $C_{\dagger}$ is a constant independent of the mesh size and $m$. Therefore, it holds that $\sigma\leq C_{\dagger} H^{-1}_{L} e^{-C m}$. As we will observe in \Cref{sec:Numerical-Experiments}, this is generally a very pessimistic bound for $\sigma$ when the non-trivial corrections provided in \Cref{Stable-SLOD-subsection} are applied.
\end{remark}
}

\begin{remark}
In the remainder of the paper, we will refer to the basis functions of $\hat{\mathcal{V}}_{L}$ and $\mathcal{V}_{L}$ by their notation with global indices. Thus, we have $\hat{\mathcal{V}}_{L}= \text{span}\{\hat{\varphi}_i\}_{i\in\{1,\ldots,N_{L}\}}$, $\mathcal{V}_{L}= \text{span}\{\varphi_i\}_{i\in\{1,\ldots,N_{L}\}}$ , where, with $i=N_{\ell-1}+j$,
\begin{equation}\label{basis-global-notation}
\hat{\varphi}_{i}=\hat{\varphi}_{\ell,j},\text{ and }\varphi_{i}=\varphi_{\ell,j}.
\end{equation}
Of course, the functions $\hat{\varphi}_{i}$ depend on the parameters $m_i$ defining the size of their supporting patches. However, to simplify notation, we do not include these parameters as subscripts in (\ref{basis-global-notation}).
\end{remark}

\subsection{Behavior of HSLOD basis functions at a given level and their associated stiffness matrix}\label{Subsec:HSLOD-Behavior}
In this subsection, we study the behavior of the normalized HSLOD basis functions constructed in the previous subsection. To that end, we first introduce a theorem (inspired by ideas from \cite[Lemma 6]{FeP20}) that provides an estimate for the condition number of the diagonal blocks of the stiffness matrix associated with the normalized HSLOD basis, and that reveals the mesh independence of these condition numbers, for which condition \eqref{Projection1-pw-for-Aorthogonality} plays a central role. Furthermore, this theorem will be useful to assess the quality of the compression operations presented in \Cref{Sec-Compressed-Op-construction-and-Error-Analysis}. After presenting the theorem, we shall explore how these condition numbers depend on the degree of linear independence (at each level) of basis functions and the contrast of the diffusion coefficient. From this study, we shall determine a quantity that can be used as a measure of how well-behaved the hierarchical basis is.

Consider $\hat{\mathcal{V}}_{L}=\text{span}\{\hat{\varphi}_{i}\}_{i\in\{1,\ldots,N_L\}}$, where $\hat{\varphi}_i$ is defined in (\ref{basis-global-notation}), and define $\hat{\mathbb{A}}_{H_L}\in \mathbb{R}^{N_L \times N_L}$ by $\big(\hat{\mathbb{A}}_{H_{L}}\big)_{ij}=a(\hat{\varphi}_i,\hat{\varphi}_j)$. Further, let $\hat{\mathbb{A}}_{\mathbf{\ell \ell}}^{(H_L)}\in \mathbb{R}^{N^b_{\ell} \times N^b_{\ell}}$ be the $\ell$-th diagonal block of $\hat{\mathbb{A}}_{H_L}$ such that $\big(\hat{\mathbb{A}}_{\mathbf{\ell \ell}}^{(H_L)}\big)_{ij}=a(\hat{\varphi}_{\ell,i},\hat{\varphi}_{\ell,j})$ with $\hat{\varphi}_{\ell,i},\hat{\varphi}_{\ell,j}\in \hat{\mathcal{B}}_{\ell}$.

Let $\Pi_{\ell}\hat{\varphi}_{\ell,i}=\sum_{T\in \mathcal{T}_{\ell}}p_{T}^{(i)}\chi_{T}$ and define $\mathbf{P}\in \mathbb{R}^{N_{\ell} \times N_{\ell}}$ such that $\mathbf{P}_{ij}=p^{(j)}_{T_i}$. From \eqref{Projection1-pw-for-Aorthogonality}, \eqref{HSLOD-Stability-Practical-Condition}, and \eqref{HSLOD-function-normalized}, it follows that $\mathbf{P}=\widetilde{\mathbf{P}}\mathbf{N}$, where $\widetilde{\mathbf{P}}\in \mathbb{R}^{N_{\ell}\times N_{\ell} }$ is such that $\widetilde{\mathbf{P}}_{ij}=\tilde{p}_{T_i}^{(j)}$, with $\Pi_{\ell}\hat{\varphi}^{\text{HSLOD}}_{\ell,j}=\sum_{T\in\mathcal{T}_{\ell}}\tilde{p}_{T}^{(j)}\chi_T$, and $\mathbf{N}^{N_{\ell}\times N_{\ell} }$ is the diagonal matrix such that $\mathbf{N}_{ii}=\frac{1}{\|\hat{\varphi}^{\text{HSLOD}}_{\ell,i}\|_a}$. In \Cref{Appendix-Eigen-Estimate-for-mesh-indep}, it is shown that 
\begin{equation}\label{PTP-lambda-min-bound}
\lambda_{\mathrm{min}}^{-1}(\mathbf{P}^T\mathbf{P})\leq \frac{C}{\lambda_{\mathrm{min}}(\widetilde{\mathbf{P}}^T \widetilde{\mathbf{P}})}H_{\ell}^{d-2},
\end{equation}
where $C$ and $\lambda_{\mathrm{min}}(\widetilde{\mathbf{P}}^T \widetilde{\mathbf{P}})$ are mesh independent quantities, and $C$ may depend on $\beta$ and $\alpha$. With $\hat{\mathbf{P}}=H_{\ell}^{\frac{d}{2}-1}\mathbf{P}$, it follows from \eqref{PTP-lambda-min-bound} that $\lambda_{\mathrm{min}}(\hat{\mathbf{P}}^T \hat{\mathbf{P}})$ has a mesh independent lower bound. Then, the diagonal blocks of the normalized-HSLOD stiffness matrix possess the following properties.
\begin{theorem}\label{HSLOD-Stiffness-Blocks-condnum-estimate}
The condition number of the diagonal block $\hat{\mathbb{A}}^{H_L}_{\ell \ell}$ of the stiffness matrix $\hat{\mathbb{A}}_{H_L}$ associated with the hierarchical basis $\hat{\mathcal{V}}_{L}$ is mesh independent for $\ell>0$ and $\mathcal{O}(H_0^{-2})$ for $\ell=0$. 

Furthermore, the smallest eigenvalue of $\hat{\mathbb{A}}_{\mathbf{\ell \ell}}^{(H_L)}$ has the lower bound
\begin{equation}\label{min_eig_HSLOD_block}
\lambda_{\mathrm{min}}\big(\hat{\mathbb{A}}_{\mathbf{\ell \ell}}^{(H_L)}\big)\geq \begin{cases} 
\frac{\pi^2\alpha}{4}\lambda_{\mathrm{min}}(\hat{\mathbf{P}}^T\hat{\mathbf{P}})& \ell>0,\\
\frac{\pi^2\alpha}{\mathrm{diam}^2(\Omega)}H^{2}_{\ell}\lambda_{\mathrm{min}}(\hat{\mathbf{P}}^T\hat{\mathbf{P}})& \ell=0,\\
\end{cases}
\end{equation}
where $\alpha>0$ is given in \eqref{A-spectral-bound} and $\lambda^{-1}_{\mathrm{min}}(\hat{\mathbf{P}}^T\hat{\mathbf{P}})=\mathcal{O}(1)$, and
\begin{equation}\label{HSLOD-block-condnum-bound}
\kappa\big(\hat{\mathbb{A}}^{H_L}_{\ell \ell} \big)\leq 
\begin{cases}
\frac{4 \mathfrak{n}_{o,\ell}}{\pi^2 \alpha \lambda_{\mathrm{min}}(\hat{\mathbf{P}}^T\hat{\mathbf{P}})}& \ell>0,\\
\frac{\mathrm{diam}^2(\Omega) \mathfrak{n}_{o,\ell}}{\pi^2 \alpha  \lambda_{\mathrm{min}}(\hat{\mathbf{P}}^T\hat{\mathbf{P}})}H^{-2}_{\ell}& \ell=0,
\end{cases}
\end{equation}
where $\mathfrak{n}_{o,\ell}$ is the maximum possible number of functions $\hat{\varphi}_{i,\ell}\in \hat{\mathcal{B}_{\ell}}$  whose supports overlap over a region of $\Omega$. 
\end{theorem}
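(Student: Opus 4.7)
The plan is to control the smallest and largest eigenvalues of $\hat{\mathbb{A}}_{\ell\ell}^{(H_L)}$ separately, using the energy normalization $\|\hat{\varphi}_{\ell,i}\|_a=1$ together with the projection constraint \eqref{Projection1-pw-for-Aorthogonality}. Fix $\mathbf{v}\in\mathbb{R}^{N^b_\ell}$ and set $w=\sum_{i}v_i\hat{\varphi}_{\ell,i}\in H^1_0(\Omega)$, so that $\mathbf{v}^T\hat{\mathbb{A}}_{\ell\ell}^{(H_L)}\mathbf{v}=\|w\|_a^2$. The whole argument reduces to sandwiching this quantity between two multiples of $\|\mathbf{v}\|_2^2$.

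First I would attack the lower bound. Since energy normalization is by a scalar, condition \eqref{Projection1-pw-for-Aorthogonality} survives normalization, giving $\Pi_{\ell-1}\hat{\varphi}_{\ell,i}=0$ and hence by linearity $\Pi_{\ell-1}w=0$. For $\ell>0$ this means $w$ has vanishing mean on every $T\in\mathcal{T}_{\ell-1}$; applying the elementwise Poincaré inequality on each such $T$ and using $\mathbf{A}\succeq\alpha I$ yields
\begin{equation*}
\|w\|_{L^2(\Omega)}^2 \leq \frac{H_{\ell-1}^2}{\pi^2\alpha}\|w\|_a^2 = \frac{4H_\ell^2}{\pi^2\alpha}\|w\|_a^2,
\end{equation*}
while for $\ell=0$ the global Poincaré-Friedrichs inequality on $H^1_0(\Omega)$ gives the same estimate with $H_{\ell-1}$ replaced by $\mathrm{diam}(\Omega)$. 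Next I would bridge from $\|w\|_{L^2}$ to $\|\mathbf{v}\|_2$. The coefficients of $\Pi_\ell w$ in the basis $\{\chi_T\}_{T\in\mathcal{T}_\ell}$ are exactly $\mathbf{P}\mathbf{v}$, so $\|\Pi_\ell w\|_{L^2(\Omega)}^2=H_\ell^d\|\mathbf{P}\mathbf{v}\|_2^2\geq H_\ell^d\lambda_{\min}(\mathbf{P}^T\mathbf{P})\|\mathbf{v}\|_2^2$. Since $\|\Pi_\ell w\|_{L^2}\leq\|w\|_{L^2}$, combining with the Poincaré estimate and rewriting via $\hat{\mathbf{P}}=H_\ell^{d/2-1}\mathbf{P}$ (so that $\lambda_{\min}(\hat{\mathbf{P}}^T\hat{\mathbf{P}})=H_\ell^{d-2}\lambda_{\min}(\mathbf{P}^T\mathbf{P})$) produces \eqref{min_eig_HSLOD_block}. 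The mesh-independence $\lambda_{\min}^{-1}(\hat{\mathbf{P}}^T\hat{\mathbf{P}})=\mathcal{O}(1)$ is the content of \eqref{PTP-lambda-min-bound} proved in Appendix \ref{Appendix-Eigen-Estimate-for-mesh-indep}.

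For the upper bound on $\lambda_{\max}(\hat{\mathbb{A}}_{\ell\ell}^{(H_L)})$ I would use a standard Gershgorin argument exploiting sparsity. Every diagonal entry equals $\|\hat{\varphi}_{\ell,i}\|_a^2=1$. Off-diagonal entries $a(\hat{\varphi}_{\ell,i},\hat{\varphi}_{\ell,j})$ vanish whenever the supports $\omega_{J_i}$ and $\omega_{J_j}$ are disjoint, and when nonzero they are bounded by $1$ in absolute value by Cauchy-Schwarz together with normalization. By the very definition of $\mathfrak{n}_{o,\ell}$, each row has at most $\mathfrak{n}_{o,\ell}-1$ nonvanishing off-diagonal entries, so Gershgorin gives $\lambda_{\max}(\hat{\mathbb{A}}_{\ell\ell}^{(H_L)})\leq\mathfrak{n}_{o,\ell}$. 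Dividing this by the lower bound \eqref{min_eig_HSLOD_block} yields \eqref{HSLOD-block-condnum-bound}; mesh-independence for $\ell>0$ is immediate, while the $\mathcal{O}(H_0^{-2})$ blow-up for $\ell=0$ is inherited entirely from the global Poincaré-Friedrichs constant.

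The main obstacle is the transfer step between the algebraic object $\mathbf{P}\mathbf{v}$ and the functional object $w$: one must carefully use that $L^2$-orthogonal projections between nested coarse spaces commute in the form $\Pi_{\ell-1}=\Pi_{\ell-1}\Pi_\ell$, and that the scaling $H_\ell^{d/2-1}$ in $\hat{\mathbf{P}}$ is precisely what absorbs the dimensional factor $H_\ell^{d-2}$ coming from combining the $L^2$ mass of a piecewise-constant coefficient vector with the elementwise Poincaré scaling $H_{\ell-1}^2$. Once this scaling bookkeeping is done correctly, the remainder of the argument is a routine assembly of Gershgorin with Poincaré-type estimates.
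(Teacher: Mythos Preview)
Your proposal is correct and follows essentially the same route as the paper: bound $\lambda_{\min}$ below by chaining $H_\ell^d\lambda_{\min}(\mathbf{P}^T\mathbf{P})\|\mathbf{v}\|_2^2\leq\|\Pi_\ell w\|_{L^2}^2\leq\|w\|_{L^2}^2=\|(1-\Pi_{\ell-1})w\|_{L^2}^2\leq\tfrac{4H_\ell^2}{\pi^2\alpha}\|w\|_a^2$ (with Poincar\'e--Friedrichs replacing the last step when $\ell=0$), and bound $\lambda_{\max}$ above by $\mathfrak{n}_{o,\ell}$ via Gershgorin using normalization and disjoint-support vanishing. The only cosmetic difference is that the paper phrases the Poincar\'e step as the projection-error estimate $\|(1-\Pi_{\ell-1})v\|_{L^2}\leq\tfrac{H_{\ell-1}}{\pi}\|\nabla v\|_{L^2}$ rather than as an elementwise mean-zero Poincar\'e inequality, but these are the same ingredient.
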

\begin{proof}
First, note that using \Cref{Riesz-basis} and for an arbitrary $\mathbf{c}=\left(c_i\right)_{i=1}^{N_{\ell}}\in \mathbb{R}^{N_{\ell}}$ we have
\begin{equation}\label{L2norm-lower-bound-lincomb-L2projections}
\Big\|\sum_{i=1}^{N_{\ell}}c_i\Pi_{\ell}\hat{\varphi}_{\ell,i}\Big\|_{L^2(\Omega)}^2=H_{\ell}^d\mathbf{c}^T\mathbf{P}^T\mathbf{P} \mathbf{c}\geq H_{\ell}^d \lambda_{\mathrm{min}}(\mathbf{P}^T\mathbf{P})\|\mathbf{c}\|_2^2.
\end{equation}
Note also that the orthogonal $L^2$-projection operator $\Pi_{\ell}$ has the following two properties.
\begin{equation}\label{L2-proj-reduced-norm-condition}
\|\Pi_{\ell}v\|_{L^2(\Omega)}\leq\|v\|_{L^2(\Omega)}\quad\text{for all }v\in L^2(\Omega),\text{ and}
\end{equation}
\begin{equation}\label{L2-complementary-projection-L2norm-condition}
\|(1-\Pi_{\ell})v\|_{L^2(\Omega)}\leq \frac{H_{\ell}}{\pi}\|\nabla v\|_{L^2(\Omega)}\quad\text{for all }v\in H^1(\Omega).
\end{equation}
Then, from \eqref{L2norm-lower-bound-lincomb-L2projections}, \eqref{L2-proj-reduced-norm-condition}, \eqref{Projection1-pw-for-Aorthogonality}, \eqref{L2-complementary-projection-L2norm-condition}, and since $H_{\ell-1}=2 H_{\ell}$, we obtain
\begin{eqnarray}\label{blocks-condnum-derivation}
\nonumber H^d_{\ell}\lambda_{\mathrm{min}}(\mathbf{P}^T\mathbf{P})\sum_{i=1}^{N_{\ell}}c_{i}^{2}&\leq & \Big\|\sum_{i=1}^{N_{\ell}}c_i\Pi_{\ell}\hat{\varphi}_{\ell,i}\Big\|_{L^2(\Omega)}^2
\leq \Big\|\sum_{i=1}^{N_{\ell}}c_i\hat{\varphi}_{\ell,i}\Big\|_{L^2(\Omega)}^2
=\Big\|(1-\Pi_{\ell-1})\sum_{i=1}^{N_{\ell}}c_i\hat{\varphi}_{\ell,i}\Big\|_{L^2(\Omega)}^2\\ 
&\leq & \frac{H_{\ell-1}^2}{\pi^2\alpha}\Big\|\sum_{i=1}^{N_{\ell}}c_i\hat{\varphi}_{\ell,i}\Big\|_{a}^2
=\frac{4 H_{\ell}^2}{\pi^2\alpha}\Big\|\sum_{i=1}^{N_{\ell}}c_i\hat{\varphi}_{\ell,i}\Big\|_{a}^2.
\end{eqnarray}
Thus, using \Cref{Riesz-basis} and the definition of $\hat{\mathbf{P}}$, we have that \eqref{blocks-condnum-derivation} implies the first line of \eqref{min_eig_HSLOD_block}. The second line of \eqref{min_eig_HSLOD_block} is obtained from the first two inequalities in the first line of \eqref{blocks-condnum-derivation}, the Poincar\'e-Friedrichs inequality, \Cref{Riesz-basis}, and the definition of $\hat{\mathbf{P}}$.

Since $|a(\hat{\varphi}_{\ell,i},\hat{\varphi}_{\ell,j})|\leq 1$ for all $\hat{\varphi}_{\ell,i},\hat{\varphi}_{\ell,j}\in \hat{\mathcal{B}}_{\ell}$, using the Gershgorin Circle Theorem we obtain $\lambda_{\mathrm{max}}\leq \mathfrak{n}_{o,\ell}$. Then, the estimate \eqref{HSLOD-block-condnum-bound} follows.
\end{proof}

Using the Rayleigh quotient, it can be shown that (see \Cref{Appendix-Eigen-Estimate-for-mesh-indep})
\begin{equation}
\lambda_{\mathrm{min}}(\hat{\mathbf{P}}^T\hat{\mathbf{P}})=H^{d-2}_{\ell}\lambda_{\mathrm{min}}(\mathbf{P}^T\mathbf{P})\geq H^{d-2}_{\ell}\lambda_{\mathrm{min}}(\widetilde{\mathbf{P}}^T\widetilde{\mathbf{P}}) \lambda_{\mathrm{min}}(\mathbf{N}^2).
\end{equation}
Note that $\lambda_{\mathrm{min}}(\widetilde{\mathbf{P}}^T\widetilde{\mathbf{P}})=\mathcal{O}(1)$, i.e., it is mesh independent (see \Cref{Appendix-Eigen-Estimate-for-mesh-indep}). The $i$-th column entries of $\widetilde{\mathbf{P}}$ indicate the degree of concentration of $\hat{\varphi}_{\ell,i}$ in each element of $\mathcal{T}_{\ell}$. Thus, $\lambda_{\mathrm{min}}(\widetilde{\mathbf{P}}^T\widetilde{\mathbf{P}})$ is a measure of the degree of linear independence of the set of basis functions $\{\hat{\varphi}_{\ell,i}\}_{i=1}^{N^b_{\ell}}$ (and thus of the basis quality). The farther from $0$ it is the more independent the basis is. The quantity $H^{d-2}_{\ell}\lambda_{\mathrm{min}}(\mathbf{N}^2)=\mathcal{O}(1)$ (see \Cref{Appendix-Eigen-Estimate-for-mesh-indep}) serves as a measure of the effect of the contrast on the basis functions. Thus, the above inequality together with \eqref{HSLOD-block-condnum-bound} suggest that the condition number of $\hat{\mathbb{A}}^{H_L}_{\ell \ell}$ may be affected by  the degree of linear independence of the basis functions (at level $\ell$) and the contrast of the diffusion coefficient.

\begin{remark}\label{Remark-basis-well-behavior-measure}
If conditions \eqref{Projection1-pw-for-Aorthogonality} and \eqref{HSLOD-Stability-Practical-Condition} are satisfied exactly, $\lambda_{\mathrm{min}}(\widetilde{\mathbf{P}}^T\widetilde{\mathbf{P}})=1$ (cf. \Cref{Appendix-Eigen-Estimate-for-mesh-indep}). In practice, however, we cannot generally satisfy condition \eqref{HSLOD-Stability-Practical-Condition} exactly, and we do it in the least-squares-error sense. Consequently, if this least-squares error is small enough, the resulting hierarchical basis should be well-behaved. 
\end{remark}
\begin{remark}
If the local orthogonalization procedure mentioned in \Cref{Gram-Schmidt-local-orthogonalization} is performed via the QR factorization method, the normalization step in that algorithm will make $\lambda_{\mathrm{min}}(\widetilde{\mathbf{P}}^T\widetilde{\mathbf{P}})$ scale with $1/H^{d-2}_{\ell}$. In that case, it follows that we could use $H^{d-2}_{\ell}\lambda_{\mathrm{min}}(\widetilde{\mathbf{P}}^T\widetilde{\mathbf{P}})$ as a measure of how well-behaved the hierarchical basis would be.
\end{remark}

With our practical hierarchical $a$-orthogonal basis already defined, we are ready to discuss the construction of a compressed operator that approximates $\mathcal{A}^{-1}_h$ (and therefore $\mathcal{A}^{-1}$).

\section{Operator compression and inversion}\label{Sec-Compressed-Op-construction-and-Error-Analysis}
In the introduction, we mentioned that the finite-rank operator $\mathcal{A}_h$ is a good approximation of the infinite-rank operator $\mathcal{A}^{-1}$ in the sense that $\|\mathcal{A}^{-1}f-\mathcal{A}^{-1}_h f\|_a$ is small for all $f\in L^2(\Omega)$. In this section, we present a sparse-compressed approximation of the operator $\mathcal{A}^{-1}$ obtained by approximating $\mathcal{A}^{-1}_h$ by a sparse-compressed finite-rank operator $\mathcal{S}$ of the form $\mathcal{S}=\mathcal{L}\circ\mathfrak{S}\circ\mathcal{R}$. Here $\mathcal{L},\mathfrak{S},\mathcal{R}$ are linear transformations, and $\mathfrak{S}:\mathbb{R}^{N_L}\rightarrow \mathbb{R}^{N_L}$ is such that $\mathfrak{S}(\mathbf{x})=\mathbb{S}\mathbf{x}$, with $\mathbb{S}\in \mathbb{R}^{N_L \times N_L}$. The term `sparse-compressed' in this context means that $\textbf{rank}(\mathcal{S})\ll \textbf{rank}\left(\mathcal{A}^{-1}_{h}\right)$ and $\mathbb{S}$ is a sparse matrix. The operator $\mathcal{S}$ is obtained after a number of compression operations, which are the subject of our following discussion. We present four possible compression operations. It is worth mentioning that the number of compression operations used in practice depends upon how well-conditioned the diagonal blocks of the stiffness matrix associated with the localized hierarchical basis of $\hat{\mathcal{V}}_{L}$ are.

\begin{remark}
If the condition numbers of the diagonal blocks of the stiffness matrix associated with the localized hierarchical basis of $\hat{\mathcal{V}}_{L}$ are small, then the sparse-compressed operator $\mathcal{S}$ can actually be computed. In that case, for a given $f\in L^2(\Omega)$, the computation of the approximate solution of \eqref{Continuous-Formulation} using the computed sparse-compressed operator $\mathcal{S}$ reduces (in its algebraic form) to the computation of a matrix-vector product.
\end{remark}

\begin{remark}
A different approach to obtain a sparse-compressed approximation of the solution operator $\mathcal{A}^{-1}$ is given in \cite{SchKatOwh21}, where the stiffness matrix associated with the (globally-supported) gamblets is approximated via a sparse Cholesky factorization of it.
\end{remark}

\subsection{First compression stage: rank reduction and sparsification by using the superlocalized basis}
Recall that $\hat{\mathcal{V}}_{L}=\text{span}\{\hat{\varphi}_{i}\}_{i\in\{1,\ldots,N_L\}}$, where $\hat{\varphi}_i$ is defined in (\ref{basis-global-notation}), and $\hat{\mathbb{A}}_{H_L}\in \mathbb{R}^{N_L \times N_L}$ is given by $\big(\hat{\mathbb{A}}_{H_{L}}\big)_{ij}=a(\hat{\varphi}_i,\hat{\varphi}_j)$. Let the operator $\mathcal{R}:L^2(\Omega)\rightarrow \mathbb{R}^{N_L}$ be such that
\begin{equation}\label{Right-Lin-Tr}
\mathcal{R}f=\left[(f,\hat{\varphi}_{1})_{L^2(\Omega)},\ldots,(f,\hat{\varphi}_{N_L})_{L^2(\Omega)}\right]^T:=\mathbf{f},
\end{equation}
and $\mathcal{L}:\mathbb{R}^{N_L}\rightarrow \hat{\mathcal{V}}_{L}$ be given by
\begin{equation*}
\mathcal{L}(\mathbf{c})=\sum_{i=1}^{N_L}c_i \hat{\varphi}_{i},
\end{equation*}
where $\mathbf{c}=\left(c_i\right)_{i=1}^{N_L}$. The operator $\hat{\mathfrak{S}}^{-1}:\mathbb{R}^{N_L}\rightarrow \mathbb{R}^{N_L}$ is such that $\hat{\mathfrak{S}}^{-1}(\mathbf{x})=\hat{\mathbb{A}}^{-1}_{H_L} \mathbf{x}$.  We define the operator $\hat{\mathcal{S}}:L^2(\Omega)\rightarrow \hat{\mathcal{V}}_{L}$ by 
\begin{equation}\label{S-hat-operator-def}
\hat{\mathcal{S}}=\mathcal{L}\circ\hat{\mathfrak{S}}^{-1}\circ\mathcal{R}.
\end{equation}
Note that $\mathbf{\text{rank}}(\hat{\mathcal{S}})=\mathbf{\text{rank}}(\mathbb{A}_{H_L})=N_L$. We have the following approximation estimate. 

\begin{theorem}
Let $\mathcal{A}^{-1}:L^2(\Omega)\rightarrow H^{1}_{0}(\Omega)$ be the solution operator of \eqref{Continuous-Formulation}, $\mathcal{A}^{-1}_h:L^2(\Omega)\rightarrow V_h$ the operator defined by \eqref{Fine-problem-def}, and $\hat{\mathcal{S}}:L^2(\Omega)\rightarrow \hat{\mathcal{V}}_{L}$ the operator given by (\ref{S-hat-operator-def}). Then, the following approximation estimate holds. 
\begin{eqnarray}\label{S-hat-Error-Estimate}
\nonumber \|\mathcal{A}^{-1}f-\hat{\mathcal{S}}f\|_a\leq \|\mathcal{A}^{-1}f-\mathcal{A}_{h}^{-1}f\|_a &+& \frac{H_L}{\pi \sqrt{\alpha}} \left\|f-\Pi_{H_L}f\right\|_{L^2(\Omega)}\\ &+& \frac{\sqrt{N_E} \big(1+\frac{\mathrm{diam}(\Omega)}{\pi}\big)}{\sqrt{\alpha}} \zeta \sigma \frac{\left\|f\right\|_{L^2(\Omega)}}{\sqrt{\lambda_{\mathrm{min}}(\mathbf{G})}},
\end{eqnarray}
where  $\zeta$ is given in \eqref{zeta-definition}, $\sigma$ in \eqref{sigma-def}, $N_E$ is the largest number of elements that can possibly be contained within the supporting patches of basis functions, and $\mathbf{G}\in \mathbb{R}^{N_L\times N_L}$ is such that \ $\mathbf{G}_{ij}=(g_{i},g_{j})_{L^2(\Omega)}$, with $\{g_i\}_{i\in\{1,\ldots,N_L\}}\subset \mathbb{Q}^{0}(\mathcal{T}_{L})$ being the basis companion of $\{\varphi_i\}_{i\in\{1,\ldots,N_L\}}$~.
\end{theorem}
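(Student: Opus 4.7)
The plan is to split the error via the triangle inequality
\[
\|\mathcal{A}^{-1}f - \hat{\mathcal{S}}f\|_a \leq \|\mathcal{A}^{-1}f - \mathcal{A}_h^{-1}f\|_a + \|\mathcal{A}_h^{-1}f - \mathcal{A}_H^{-1}f\|_a + \|\mathcal{A}_H^{-1}f - \hat{\mathcal{S}}f\|_a,
\]
with $H=H_L$, so that the first summand of the target estimate appears literally, and the remaining two summands stem from the Galerkin projection onto the global space $\mathcal{V}_L$ and from passing from $\mathcal{V}_L$ to the localized space $\hat{\mathcal{V}}_L$, respectively.

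For the middle Galerkin term, I would exploit the identity $\mathcal{A}_H^{-1}\!\circ\!\Pi_{H_L}=\mathcal{A}_h^{-1}\!\circ\!\Pi_{H_L}$ together with the Galerkin orthogonality of the residual $w:=\mathcal{A}_h^{-1}f-\mathcal{A}_H^{-1}f$ against $\mathcal{V}_L$. Testing the defining equations against $w$ yields $\|w\|_a^2=(f,w)_{L^2(\Omega)}-a(\mathcal{A}_H^{-1}f,w)=(f-\Pi_{H_L}f,w)_{L^2(\Omega)}$ (any part of $f$ in $\mathbb{Q}^0(\mathcal{T}_L)$ produces a residual that is canceled by the identity above). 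Writing $(f-\Pi_{H_L}f,w)_{L^2}=(f-\Pi_{H_L}f,w-\Pi_{H_L}w)_{L^2}$ and applying $\|w-\Pi_{H_L}w\|_{L^2}\leq \tfrac{H_L}{\pi}\|\nabla w\|_{L^2}\leq \tfrac{H_L}{\pi\sqrt{\alpha}}\|w\|_a$ produces exactly the second summand of the bound.

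For the localization term, I expand $\mathcal{A}_H^{-1}f=\sum_{i=1}^{N_L}c_i^*\varphi_i$ in the global hierarchical basis and use the same coefficients to form the localized candidate $\tilde u:=\sum c_i^*\hat\varphi_i\in\hat{\mathcal{V}}_L$. Since $\hat{\mathcal{S}}f$ is the $a$-orthogonal projection of $\mathcal{A}_h^{-1}f$ onto $\hat{\mathcal{V}}_L$, best approximation followed by triangle inequality gives
\[
\|\mathcal{A}_h^{-1}f-\hat{\mathcal{S}}f\|_a\leq \|\mathcal{A}_h^{-1}f-\tilde u\|_a\leq \|\mathcal{A}_h^{-1}f-\mathcal{A}_H^{-1}f\|_a+\Big\|\sum c_i^*(\varphi_i-\hat\varphi_i)\Big\|_a,
\]
so that only the combined localization residual remains. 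To bound it without a stray $\sqrt{N_L}$ factor, I would not apply \Cref{Lemma-error-estimate-localization-more-stable} termwise; instead I would rerun its proof on the sum, using the conormal-derivative identity from \Cref{Lemma-Localization-Error-bound-conormalDer} to write $a(\sum c_i^*(\varphi_i-\hat\varphi_i),v)=-\sum c_i^*\langle \gamma_i,\mathrm{tr}_{\Sigma_{\omega_{J_i}}}v|_{\omega_{J_i}}\rangle$, bound each conormal norm by $\sqrt{N_E}\,\zeta\sigma$ as in the proof of \Cref{Lemma-error-estimate-localization-more-stable}, apply Cauchy–Schwarz across $i$, and collapse $\sum_i\|v\|_{a_{\omega_{J_i}}}^2$ using the bounded-overlap property of the supporting patches (absorbed into the $N_E$ factor). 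This produces
\[
\Big\|\sum c_i^*(\varphi_i-\hat\varphi_i)\Big\|_a\leq \frac{(1+\mathrm{diam}(\Omega)/\pi)\sqrt{N_E}}{\sqrt{\alpha}}\,\zeta\,\sigma\,\|\mathbf{c}^*\|_2.
\]

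The last step, and the one I expect to be the main obstacle, is translating $\|\mathbf{c}^*\|_2$ into $\|f\|_{L^2}/\sqrt{\lambda_{\min}(\mathbf{G})}$. Passing to the $\mathbb{Q}^0$-companion $p:=\sum c_i^* g_i$ of $\mathcal{A}_H^{-1}f=\mathcal{A}_h^{-1}p$, the Gram-matrix estimate gives $\lambda_{\min}(\mathbf{G})\|\mathbf{c}^*\|_2^2\leq \|p\|_{L^2(\Omega)}^2$, so the claim reduces to the stability bound $\|p\|_{L^2(\Omega)}\leq \|f\|_{L^2(\Omega)}$. This is the delicate point, because $p$ is defined only through the Petrov–Galerkin-type relation $(p,v)_{L^2(\Omega)}=(f,v)_{L^2(\Omega)}$ for all $v\in\mathcal{V}_L$; since $\mathbb{Q}^0(\mathcal{T}_L)\neq\mathcal{V}_L$, $p$ is not an $L^2$-projection of $f$, and establishing the bound with the sharp unit constant advertised in the theorem requires an inf–sup or duality argument exploiting the bijectivity of $\Pi_L\circ\mathcal{A}_h^{-1}$ between $\mathbb{Q}^0(\mathcal{T}_L)$ and itself. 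Once this step is in place, combining the three displayed bounds yields the claimed estimate.
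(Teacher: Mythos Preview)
Your overall architecture is right, and both the fine-scale/Galerkin split and the conormal-derivative argument for the localization residual match the paper's proof essentially line by line. The only real problem is exactly the one you flag as the ``main obstacle'': controlling $\|\mathbf{c}^*\|_2$ by $\|f\|_{L^2(\Omega)}/\sqrt{\lambda_{\min}(\mathbf{G})}$. With your choice of intermediate $\mathcal{A}_H^{-1}f$, the companion $p=\sum c_i^* g_i$ is characterized by the Petrov--Galerkin relation $(p,v)_{L^2}=(f,v)_{L^2}$ for $v\in\mathcal{V}_L$, and there is no reason this map should be an $L^2$-contraction with unit constant; an inf--sup argument would at best introduce an extra problem-dependent constant that is \emph{not} present in the stated estimate. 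So as written the last step is a genuine gap.

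The paper sidesteps this difficulty by a different choice of intermediate: instead of the Galerkin solution $\mathcal{A}_H^{-1}f$, it uses $\tilde u:=\mathcal{A}_h^{-1}\Pi_L f\in\mathcal{V}_L$. The second term $\|u_h-\tilde u\|_a$ is bounded by the very same projection argument you give (indeed $a(u_h-\tilde u,u_h-\tilde u)=(f-\Pi_Lf,u_h-\tilde u)$ directly from the definitions). For the third term one writes $\Pi_Lf=\sum_i c_i g_i$, so that $\tilde u=\sum_i c_i\varphi_i$ because $\varphi_i=\mathcal{A}_h^{-1}g_i$, and takes $\hat w=\sum_i c_i\hat\varphi_i$ as the comparison element in C\'ea's lemma. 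Now the coefficient bound is immediate:
\[
\lambda_{\min}(\mathbf{G})\,\|\mathbf{c}\|_2^2\le \mathbf{c}^T\mathbf{G}\,\mathbf{c}=\|\Pi_Lf\|_{L^2(\Omega)}^2\le \|f\|_{L^2(\Omega)}^2,
\]
the last inequality being the $L^2$-contractivity of the orthogonal projection $\Pi_L$. Replacing $\mathcal{A}_H^{-1}f$ by $\mathcal{A}_h^{-1}\Pi_Lf$ in your decomposition removes the obstacle entirely and yields the estimate with the exact constants stated.
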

\begin{proof}
Let $f\in L^2(\Omega)$, $u=\mathcal{A}^{-1}f$, $u_h=\mathcal{A}^{-1}_{h}f$, $\tilde{u}=\mathcal{A}^{-1}_{h}\Pi_{L}f$, and $\hat{u}=\hat{\mathcal{S}}f$. From the definition of $\hat{\mathcal{S}}$, we have that
\begin{equation*}
a(\hat{u},v)=(f,v)_{L^2(\Omega)} \;\;\;\; \text{for all } v\in \hat{\mathcal{V}}_{L}. 
\end{equation*}
C\'ea's lemma establishes that
\begin{equation*}
\|u-\hat{u}\|_a\leq \|u-\hat{w}\|_a \;\;\;\;\;\;\text{for all } \hat{w}\in \hat{\mathcal{V}}_{L}.
\end{equation*}
Thus, using C\'ea's lemma and the triangle inequality, we obtain
\begin{equation}\label{S-hat-TriangIneq-bound}
\|u-\hat{u}\|_a \leq \|u-u_h\|_a+\|u_h-\tilde{u}\|_a+\|\tilde{u}-\hat{w}\|_a,
\end{equation}
where $\hat{w}\in \hat{\mathcal{V}}_{L}$ is arbitrary. To obtain the error estimate, we derive bounds for the last two terms on the r.h.s.\ of the above inequality. 

From (\ref{Continuous-Formulation}), the definition of $\tilde{u}$, the Cauchy-Schwarz inequality, (\ref{A-spectral-bound}), the property of $\Pi_{L}$ given by \eqref{L2-complementary-projection-L2norm-condition}, and noticing that $u_h-\tilde{u}\in V_h \subset H^{1}_{0}(\Omega)$, we have
\begin{eqnarray}\label{S-hat-error-bound-rhs1}
\nonumber \|u_h-\tilde{u}\|^{2}_{a}=a(u_h-\tilde{u},u_h-\tilde{u})&=&a(u_h,u_h-\tilde{u})-a(\tilde{u},u_h-\tilde{u})\\
\nonumber &=&(f,u_h-\tilde{u})_{L^2(\Omega)}-(\Pi_{L}f,u_h-\tilde{u})_{L^2(\Omega)}\\
\nonumber &=&(f-\Pi_{L}f,u_h-\tilde{u})_{L^2(\Omega)}\\
\nonumber &=&(f-\Pi_{L}f,u_h-\tilde{u}-\Pi_{L}(u_h-\tilde{u}))_{L^2(\Omega)}\\
\nonumber &\leq & \|f-\Pi_{L}f\|_{L^2(\Omega)}\|u_h-\tilde{u}-\Pi_{L}(u_h-\tilde{u})\|_{L^2(\Omega)}\\
&\leq & \frac{H_L}{\pi \sqrt{\alpha}} \left\|f-\Pi_{L}f\right\|_{L^2(\Omega)}\|u_h-\tilde{u}\|_{a}.
\end{eqnarray}
Let $\Pi_{L}f=\sum_{i=1}^{N_L}c_i g_i$. From the definition of $\tilde{u}$, and noticing that $\varphi_i=\mathcal{A}_{h}^{-1}g_i$ (since $g_i$ is the $\mathbb{Q}^0$-companion of $\varphi_i$), we obtain $\tilde{u}=\sum_{i=1}^{N_L}c_i \varphi_i$, where $\varphi_i$ is defined in (\ref{basis-global-notation}). With $\hat{w}=\sum_{i=1}^{N_L}c_i \hat{\varphi}_i$, $\Omega_i=\mathrm{supp}(\hat{\varphi}_i)$, the Cauchy-Schwarz inequality, \Cref{Lemma-Localization-Error-bound-conormalDer}, \eqref{sigma-def}, Lemma \ref{Riesz-basis}, and since $\|\Pi_{L}f\|_{L^2(\Omega)}\leq \|f\|_{L^2(\Omega)}$, we can bound the second r.h.s.\ term of \eqref{S-hat-TriangIneq-bound} as follows:
\begin{eqnarray}\label{S-hat-error-bound-rhs2}
\nonumber \|\tilde{u}-\hat{w}\|_{a}^2&=& \sum_{i=1}^{N_L}c_i a\left(\varphi_{i}-\hat{\varphi}_{i},\tilde{u}-\hat{w}\right) \\
\nonumber &\leq & \left(\sum_{i=1}^{N_L}c_i^2\right)^{\frac{1}{2}} \left(\sum_{i=1}^{N_L}a\left(\varphi_{i}-\hat{\varphi}_{i},\tilde{u}-\hat{w}\right)^2 \right)^{\frac{1}{2}} \\
\nonumber &\leq & \frac{\left(1+\frac{\mathrm{diam}(\Omega)}{\pi}\right)}{\sqrt{\alpha}}\zeta \sigma \left(\sum_{i=1}^{N_L}\|(\tilde{u}-\hat{w})_{|_{\omega_i}}\|^2_{a_{\Omega_i}}\right)^{\frac{1}{2}}\left(\sum_{i=1}^{N_L}c_i^2\right)^{\frac{1}{2}}\\
&\leq & \frac{\left(1+\frac{\mathrm{diam}(\Omega)}{\pi}\right)}{\sqrt{\alpha}} \zeta \sigma \left(N_E\|(\tilde{u}-\hat{w}\|^2_{a}\right)^{\frac{1}{2}}\left(\frac{\left\|f\right\|_{L^2(\Omega)}}{\sqrt{\lambda_{\mathrm{min}}(\mathbf{G})}}\right)\cdot
\end{eqnarray}
Thus, from (\ref{S-hat-TriangIneq-bound}), (\ref{S-hat-error-bound-rhs1}), and (\ref{S-hat-error-bound-rhs2}), the estimate in (\ref{S-hat-Error-Estimate}) follows.
\end{proof}
\begin{remark}
The third term of the r.h.s.\ of \eqref{S-hat-Error-Estimate} suggests that the error of the approximate solution due to the basis localization procedure will be small provided that $\sigma$ is small and $\lambda_{\mathrm{min}}(\mathbf{G})$ is large enough. In all our numerical experiments, we observed that there exist a set $\{g_i\}_{i\in\{1,\ldots,N_L\}}\subset \mathbb{Q}^{0}(\mathcal{T}_{L})$ as defined in \Cref{S-hat-Error-Estimate} such that these two sufficient conditions for the smallness of the error due to basis localization are satisfied, even in the presence of high-contrast channels in the diffusion coefficient. Note in particular that those two sufficient conditions are satisfied whenever the set $\{g_i\}_{i\in\{1,\ldots,N_L\}}\subset \mathbb{Q}^{0}(\mathcal{T}_{L})$ is a Riesz stable basis with a corresponding small $\sigma$ (in accordance with Assumption 5.2 in \cite{HaPe21b}). Further, note that the error due to basis localization will exhibit a superexponential decay if $\frac{\zeta \sigma}{\sqrt{\lambda_{\mathrm{min}}(\mathbf{G})}}$ does so.
\end{remark}

\subsection{Second compression stage: sparsification by discarding off\hyph block\hyph diagonal entries of $\hat{\mathbb{A}}_{H_L}$}\label{sec:second compression stage}
If we have a hierarchical basis where all the basis functions pertaining to different levels are $a$-orthogonal, the associated stiffness matrix $\hat{\mathbb{A}}_{H_L}$ will be block-diagonal. As mentioned earlier, in order to obtain an approximation space with a localized hierarchical basis (which can be computed efficiently), we might loose $a$-orthogonality among some basis functions at different levels. This results in a stiffness matrix $\hat{\mathbb{A}}_{H_L}$ that is no longer block-diagonal, since some of its off-block-diagonal entries containing the inner product of non-$a$-orthogonal functions from different levels will be non-zero (but small).  

Let $\check{\mathbb{A}}_{H_L}\in \mathbb{R}^{N_L \times N_L}$ be the block-diagonal matrix such that 
\begin{equation*}
\left(\check{\mathbb{A}}_{H_L}\right)_{ij}=\begin{cases}
a(\hat{\varphi}_{i},\hat{\varphi}_{j}) & \text{if level}(\hat{\varphi}_{i})=\text{level}(\hat{\varphi}_{j}),\\
0& \text{otherwise,}
\end{cases} 
\end{equation*}
and $\check{\mathfrak{S}}^{-1}_{H_L}:\mathbb{R}^{N_L} \rightarrow \mathbb{R}^{N_L}$ be given by $\check{\mathfrak{S}}^{-1}_{H_L}(\mathbf{x})=\check{\mathbb{A}}^{-1}_{H_L}\mathbf{x}$.
Then, we define the operator $\check{S}:L^2(\Omega) \rightarrow \hat{\mathcal{V}}_{{L}}$ by
\begin{equation}\label{S-check-def}
\check{S}=\mathcal{L}\circ\check{\mathfrak{S}}_{H_{L}}^{-1}\circ\mathcal{R}.
\end{equation}
To estimate how well $\hat{\mathcal{S}}$ is approximated by the more compressed operator $\check{\mathcal{S}}$, we need to determine first how well $\check{\mathbb{A}}_{H_L}^{-1}$ approximates $\hat{\mathbb{A}}_{H_L}^{-1}$. The following lemma provides a way to compare these matrices via the quantification of the change in the solution of a linear system when the coefficients matrix is perturbed by throwing away all off-block-diagonal entries.

\begin{lemma}\label{lem:discrading-off-block-entries}
Let $\mathbf{A}\in \mathbb{R}^{n \times n}$ be a SPD matrix and $\bar{\mathbf{A}}\in \mathbb{R}^{n \times n}$ the block-diagonal matrix whose block-diagonal entries coincide with those of $\mathbf{A}$. Let $\mathcal{N}_b\in \mathbb{N}$ be the number of diagonal blocks of $\bar{\mathbf{A}}$, $n_i\in\mathbb{N}$ the number of rows of its $i$-th diagonal block, $\delta:=\mathbf{A}-\bar{\mathbf{A}}$, and $\delta_{\mathbf{i}}$ the rectangular submatrix of $\delta$ obtained by collecting the rows of $\delta$ whose indices coincide with those of the rows of $\mathbf{A}$ containing its $i$-th block.  Let $\mathbf{b} \in \mathbb{R}^n$ be arbitrary, $\mathbf{x}\in\mathbb{R}^n$ s.t.\ $\mathbf{A}\mathbf{x}=\mathbf{b}$, and $\bar{\mathbf{x}}\in\mathbb{R}^n$ s.t.\ $\bar{\mathbf{A}}\bar{\mathbf{x}}=\mathbf{b}$. Then the following error bound holds.
\begin{equation}\label{Error-Estimate-Perturbed-LinSys}
\|\mathbf{x}-\bar{\mathbf{x}}\|_2\leq \Big( \sum_{i=1}^{\mathcal{N}_\mathbf{b}}\lambda_{\mathrm{min}}^{-2}\left(\mathbf{A_{ii}}\right)\|\delta_{\mathbf{i}}\|_2^2 \|\mathbf{x}\|_2^2 \Big)^{\frac{1}{2}}
\end{equation}
where $\mathbf{A_{ii}}\in \mathbb{R}^{n_{i} \times n_i}$ is the $i$-th diagonal block of $\bar{\mathbf{A}}$ and $\mathbf{A}$. 
\end{lemma}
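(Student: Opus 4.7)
The plan is to exploit the block-diagonal structure of $\bar{\mathbf{A}}$ to reduce the global error to a sum of blockwise errors, each controlled by the smallest eigenvalue of the corresponding diagonal block. First I would subtract the two linear systems: writing $\mathbf{A}\mathbf{x} = \bar{\mathbf{A}}\mathbf{x} + \delta\mathbf{x}$ and using $\bar{\mathbf{A}}\bar{\mathbf{x}} = \mathbf{b} = \mathbf{A}\mathbf{x}$, one obtains the identity $\bar{\mathbf{A}}(\mathbf{x}-\bar{\mathbf{x}}) = -\delta\mathbf{x}$, hence $\mathbf{x}-\bar{\mathbf{x}} = -\bar{\mathbf{A}}^{-1}\delta\mathbf{x}$. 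The inverse exists because each diagonal block $\mathbf{A}_{ii}$, being a principal submatrix of the SPD matrix $\mathbf{A}$, is itself SPD.

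Next I would read this identity block by block. Since $\bar{\mathbf{A}}^{-1}$ is block-diagonal with diagonal blocks $\mathbf{A}_{ii}^{-1}$, and by construction the rows of $\delta$ selected in $\delta_{\mathbf{i}}$ are exactly those aligning with the $i$-th diagonal block, the $i$-th block of the error vector satisfies
\begin{equation*}
(\mathbf{x}-\bar{\mathbf{x}})_{\mathbf{i}} = -\mathbf{A}_{ii}^{-1}\,\delta_{\mathbf{i}}\,\mathbf{x}.
\end{equation*}
Taking Euclidean norms and applying submultiplicativity of the spectral norm gives $\|(\mathbf{x}-\bar{\mathbf{x}})_{\mathbf{i}}\|_2 \leq \|\mathbf{A}_{ii}^{-1}\|_2\,\|\delta_{\mathbf{i}}\|_2\,\|\mathbf{x}\|_2$, and the SPD property of $\mathbf{A}_{ii}$ implies $\|\mathbf{A}_{ii}^{-1}\|_2 = \lambda_{\mathrm{min}}^{-1}(\mathbf{A}_{ii})$.

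Finally, I would assemble the global bound. Because $\bar{\mathbf{A}}$ is block-diagonal, the blockwise decomposition is orthogonal with respect to the Euclidean inner product, so $\|\mathbf{x}-\bar{\mathbf{x}}\|_2^2 = \sum_{i=1}^{\mathcal{N}_{\mathbf{b}}} \|(\mathbf{x}-\bar{\mathbf{x}})_{\mathbf{i}}\|_2^2$. Substituting the blockwise estimate from the previous step and extracting $\|\mathbf{x}\|_2^2$ from every summand yields exactly the bound \eqref{Error-Estimate-Perturbed-LinSys} after a square root. I do not anticipate a genuine obstacle: the argument is essentially linear-algebra bookkeeping. The only point requiring care is to keep the indexing consistent between the rectangular slice $\delta_{\mathbf{i}} \in \mathbb{R}^{n_i \times n}$, whose column index ranges over the full vector $\mathbf{x}$, and the square block $\mathbf{A}_{ii} \in \mathbb{R}^{n_i \times n_i}$, whose inverse acts on the result of $\delta_{\mathbf{i}}\mathbf{x}$.
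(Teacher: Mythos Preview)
Your proposal is correct and follows essentially the same approach as the paper: derive the blockwise identity $(\mathbf{x}-\bar{\mathbf{x}})_{\mathbf{i}} = -\mathbf{A}_{ii}^{-1}\delta_{\mathbf{i}}\mathbf{x}$, bound each block via $\|\mathbf{A}_{ii}^{-1}\|_2 = \lambda_{\mathrm{min}}^{-1}(\mathbf{A}_{ii})$ and submultiplicativity, and sum the squared block norms. The only cosmetic difference is that you first write the global identity $\bar{\mathbf{A}}(\mathbf{x}-\bar{\mathbf{x}}) = -\delta\mathbf{x}$ and then read it blockwise, whereas the paper works directly at the block level from the start.
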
 

\begin{proof}
Let $\mathbf{x}=[\mathbf{x}_1^T,\ldots,\mathbf{x}_{\mathcal{N}_{\mathbf{b}}}^T]^T$, $\bar{\mathbf{x}}=[\bar{\mathbf{x}}_1^T,\ldots, \bar{\mathbf{x}}_{\mathcal{N}_{\mathbf{b}}}^T]^T$, and $\mathbf{b}=[\mathbf{b}_1^T,\ldots,\mathbf{b}_{\mathcal{N}_\mathbf{b}}^T]^T$, where $\mathbf{x}_{\mathbf{j}},\bar{\mathbf{x}}_{\mathbf{j}},\mathbf{b}_{\mathbf{j}}\in \mathbb{R}^{n_j}$. Since the diagonal blocks of $\bar{\mathbf{A}}$ and $\mathbf{A}$ coincide, the diagonal blocks of $\delta$ are zero, i.e., $\delta_{\mathbf{ii}}:=\mathbf{A_{ii}}-\bar{\mathbf{A}}_{\mathbf{ii}}=0$. Then we can write
$\mathbf{b}_{\mathbf{i}}=\mathbf{A_{ii}}\mathbf{x}_{\mathbf{i}}+\delta_{\mathbf{i}} \mathbf{x}$.
Since $\mathbf{b}_{\mathbf{i}}=\mathbf{A_{ii}}\bar{\mathbf{x}}_{\mathbf{i}}$, it follows that $\mathbf{A_{ii}}(\mathbf{x}_\mathbf{i}-\bar{\mathbf{x}}_{\mathbf{i})}+\delta_{\mathbf{i}} \mathbf{x}=0$,
or equivalently $\mathbf{x}_{\mathbf{i}}-\bar{\mathbf{x}}_{\mathbf{i}}=- \mathbf{A_{ii}^{-1}} \delta_{\mathbf{i}} \mathbf{x}$.
Then, we have
\begin{equation}\label{Error-Perturbed-LinSys-Block}
\|\mathbf{x}_{\mathbf{i}} -\bar{\mathbf{x}}_{\mathbf{i}}\|_2 \leq \|\mathbf{A_{ii}^{-1}}\|_2 \|\delta_{\mathbf{i}}\|_2 \|\mathbf{x}\|_2= \lambda_{\mathrm{min}}^{-1}\big(\mathbf{A_{ii}}\big) \|\delta_{\mathbf{i}}\|_2 \|\mathbf{x}\|_2
\end{equation}
Thus, the error estimate (\ref{Error-Estimate-Perturbed-LinSys}) follows from (\ref{Error-Perturbed-LinSys-Block}) and noticing that $\|\mathbf{x}-\bar{\mathbf{x}}\|^2_2=\sum_{i=1}^{\mathcal{N}_\mathbf{b}}\|\mathbf{x}_{\mathbf{i}}-\bar{\mathbf{x}}_{\mathbf{i}}\|^2_2$.
\end{proof}

\begin{theorem}
Let $\hat{\mathcal{S}}$ and $\check{\mathcal{S}}$ be the operators defined by (\ref{S-hat-operator-def}) and (\ref{S-check-def}), respectively. Let $\delta_{H_L}:= \hat{\mathbb{A}}_{H_L}-\check{\mathbb{A}}_{H_L}$, and $\delta_{\mathbf{j}}$ be the rectangular submatrix of $\delta_{H_L}$ obtained by collecting the rows of $\delta_{H_L}$ whose indices coincide with those of the rows of $\check{\mathbb{A}}_{H_L}$ containing its $j$-th block. Then, the following error estimate holds:
\begin{eqnarray}\label{Error-Estimate-4}
\nonumber \|\hat{\mathcal{S}}f-\check{\mathcal{S}}f\|_a &\leq& \left[\Big(\max_{\mathbf{j}\in\{1,\ldots,N_L\}}\lambda_{\mathrm{max}}\big(\hat{\mathbb{A}}_{\mathbf{jj}}^{(H_L)}\big) + \|\delta_{H_L}\|_{2}\Big) \sum_{i=1}^{N_L}\frac{\|\delta_{\mathbf{i}}\|^2_2}{\lambda^{2}_{\mathrm{min}}\big(\hat{\mathbb{A}}_{\mathbf{ii}}^{(H_L)}\big)} \right]^{\frac{1}{2}} \\ 
&\times & \frac{\mathrm{diam}(\Omega)\|f\|_{L^2(\Omega)}}{\pi\sqrt{\lambda_{\mathrm{min}}(\hat{\mathbb{A}}_{H_L})\alpha}},
\end{eqnarray}
where $\hat{\mathbb{A}}_{\mathbf{ii}}^{(H_L)}$ is the $i$-th diagonal block of $\check{\mathbb{A}}_{H_L}$ and $\hat{\mathbb{A}}_{H_L}$.
\end{theorem}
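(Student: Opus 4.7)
The plan is to reduce the energy-norm difference between the two Galerkin approximations to a matrix-algebraic comparison, then apply \Cref{lem:discrading-off-block-entries}. Let $\hat u=\hat{\mathcal S}f$ and $\check u=\check{\mathcal S}f$, and write $\hat u=\sum_i\hat x_i\hat\varphi_i$, $\check u=\sum_i\check x_i\hat\varphi_i$ so that $\hat{\mathbb A}_{H_L}\hat{\mathbf x}=\mathbf f=\check{\mathbb A}_{H_L}\check{\mathbf x}$ with $\mathbf f=\mathcal Rf$. Since $\hat{\mathbb A}_{H_L}$ is the Gram matrix of $\{\hat\varphi_i\}$ in the $a$-inner product, we have the fundamental identity
\begin{equation*}
\|\hat u-\check u\|_a^2=(\hat{\mathbf x}-\check{\mathbf x})^T\hat{\mathbb A}_{H_L}(\hat{\mathbf x}-\check{\mathbf x})\leq \lambda_{\mathrm{max}}(\hat{\mathbb A}_{H_L})\,\|\hat{\mathbf x}-\check{\mathbf x}\|_2^2.
\end{equation*}
This isolates the three factors that will eventually appear in the bound: the spectral radius of $\hat{\mathbb A}_{H_L}$, the perturbation effect on the linear system, and the size of the right-hand side.

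Next I would control $\lambda_{\mathrm{max}}(\hat{\mathbb A}_{H_L})$. Writing $\hat{\mathbb A}_{H_L}=\check{\mathbb A}_{H_L}+\delta_{H_L}$ and using that $\check{\mathbb A}_{H_L}$ is block-diagonal with $\lambda_{\mathrm{max}}(\check{\mathbb A}_{H_L})=\max_{\mathbf j}\lambda_{\mathrm{max}}(\hat{\mathbb A}_{\mathbf{jj}}^{(H_L)})$, the triangle inequality for the spectral norm yields
\begin{equation*}
\lambda_{\mathrm{max}}(\hat{\mathbb A}_{H_L})\leq\max_{\mathbf j\in\{1,\ldots,N_L\}}\lambda_{\mathrm{max}}\bigl(\hat{\mathbb A}_{\mathbf{jj}}^{(H_L)}\bigr)+\|\delta_{H_L}\|_2,
\end{equation*}
which is exactly the first bracketed factor in \eqref{Error-Estimate-4}. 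For the perturbation factor, I would apply \Cref{lem:discrading-off-block-entries} with $\mathbf A=\hat{\mathbb A}_{H_L}$, $\bar{\mathbf A}=\check{\mathbb A}_{H_L}$, $\mathbf b=\mathbf f$, giving
\begin{equation*}
\|\hat{\mathbf x}-\check{\mathbf x}\|_2^2\leq \Bigl(\sum_{i=1}^{N_L}\lambda_{\mathrm{min}}^{-2}\bigl(\hat{\mathbb A}_{\mathbf{ii}}^{(H_L)}\bigr)\|\delta_{\mathbf i}\|_2^2\Bigr)\|\hat{\mathbf x}\|_2^2.
\end{equation*}

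The remaining task is to bound $\|\hat{\mathbf x}\|_2$ in terms of $\|f\|_{L^2(\Omega)}$, and here the Rayleigh quotient and the continuous problem's a priori bound cooperate. Since $\hat u\in\hat{\mathcal V}_L\subset H^1_0(\Omega)$ solves $a(\hat u,v)=(f,v)_{L^2(\Omega)}$ for all $v\in\hat{\mathcal V}_L$, testing with $v=\hat u$ and combining Cauchy--Schwarz, the Poincar\'e--Friedrichs inequality, and the coercivity constant $\alpha$ from \eqref{A-spectral-bound} gives the standard a priori bound
\begin{equation*}
\|\hat u\|_a\leq \frac{\mathrm{diam}(\Omega)}{\pi\sqrt{\alpha}}\,\|f\|_{L^2(\Omega)}.
\end{equation*}
On the other hand, $\|\hat u\|_a^2=\hat{\mathbf x}^T\hat{\mathbb A}_{H_L}\hat{\mathbf x}\geq\lambda_{\mathrm{min}}(\hat{\mathbb A}_{H_L})\|\hat{\mathbf x}\|_2^2$, so
\begin{equation*}
\|\hat{\mathbf x}\|_2^2\leq \frac{\mathrm{diam}^2(\Omega)}{\pi^2\alpha\,\lambda_{\mathrm{min}}(\hat{\mathbb A}_{H_L})}\,\|f\|_{L^2(\Omega)}^2.
\end{equation*}

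Finally, I would chain all four estimates and take the square root to recover \eqref{Error-Estimate-4}. I do not expect any serious obstacle: the only slightly delicate point is keeping clear which matrix plays the role of the ``exact'' operator and which the ``perturbed'' one when invoking \Cref{lem:discrading-off-block-entries}, since its statement is phrased in terms of the solution $\mathbf x$ of the unperturbed system, matching $\hat{\mathbf x}$ here. Everything else is a direct assembly of standard spectral and Poincar\'e-type estimates.
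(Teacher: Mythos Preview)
Your proposal is correct and follows essentially the same approach as the paper: both arguments reduce $\|\hat u-\check u\|_a^2$ to the quadratic form $(\hat{\mathbf x}-\check{\mathbf x})^T\hat{\mathbb A}_{H_L}(\hat{\mathbf x}-\check{\mathbf x})$, split $\hat{\mathbb A}_{H_L}=\check{\mathbb A}_{H_L}+\delta_{H_L}$ to get the first factor, invoke \Cref{lem:discrading-off-block-entries} for the perturbation bound, and control $\|\hat{\mathbf x}\|_2$ via $\lambda_{\mathrm{min}}(\hat{\mathbb A}_{H_L})$ together with the Poincar\'e--Friedrichs a~priori estimate on $\|\hat u\|_a$. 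The only cosmetic difference is that the paper writes the quadratic form as $\mathbf c_d^T\check{\mathbb A}_{H_L}\mathbf c_d+\mathbf c_d^T\delta_{H_L}\mathbf c_d$ and bounds each summand, whereas you first pass to $\lambda_{\mathrm{max}}(\hat{\mathbb A}_{H_L})$ and then apply the spectral-norm triangle inequality; the resulting bound is identical.
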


\begin{proof}
Let $f\in L^2(\Omega)$, $\hat{\mathcal{S}}f=\hat{u}=\sum_{i=1}^{N_L}\hat{c}_i \hat{\varphi}_{i}$, $\check{\mathcal{S}}f=\check{u}=\sum_{i=1}^{N_L}\check{c}_i \hat{\varphi}_{i}$ and $\mathbf{c}_d=[\hat{c}_1-\check{c}_1,\ldots,\hat{c}_{N_L}-\check{c}_{N_L}]$. Using \cref{lem:discrading-off-block-entries} and \cref{Riesz-basis}, we have
\begin{eqnarray*}
 \|\hat{u}-\check{u}\|^{2}_{a}&=&\Big\|\sum_{i=1}^{N_L}(\hat{c}_i-\check{c}_i)\hat{\varphi}_{i} \Big\|^2_{a} = \mathbf{c}_{d} \hat{\mathbb{A}}_{H_L}^{(L)} \mathbf{c}_{d}^T =\mathbf{c}_{d} \check{\mathbb{A}}_{H_L} \mathbf{c}_{d}^T +\mathbf{c}_{d} \delta_{H_L} \mathbf{c}_{d}^T\\
&\leq & \left(\|\check{\mathbb{A}}_{H_L}\|_2 + \|\delta_{H_L}\|_{2}\right)\|\mathbf{c}_{d}\|_2^2 \leq \left(\|\check{\mathbb{A}}_{H_L}\|_2 + \|\delta_{H_L}\|_{2}\right) \sum_{i=1}^{L}\frac{\|\delta_{\mathbf{i}}\|^2_2}{\lambda^{2}_{\mathrm{min}}\big(\hat{\mathbb{A}}_{\mathbf{ii}}^{(H_L)}\big)} \|\hat{\mathbf{c}}\|_2^2 \\
 &\leq& \left(\max_{\mathbf{j}\in\{1,\ldots,N_L\}}\lambda_{\mathrm{max}}\big(\hat{\mathbb{A}}_{\mathbf{jj}}^{(H_L)}\big) + \|\delta_{H_L}\|_{2}\right) \sum_{i=1}^{L}\frac{\|\delta_{\mathbf{i}}\|^2_2}{\lambda^{2}_{\mathrm{min}}\big(\hat{\mathbb{A}}_{\mathbf{ii}}^{(H_L)}\big)}\frac{\mathrm{diam}^2(\Omega)\|f\|^2_{L^2(\Omega)}}{\pi^2\lambda_{\mathrm{min}}(\hat{\mathbb{A}}_{H_L})\alpha}\cdot
\end{eqnarray*}
\end{proof}

\begin{remark}
By construction, the localized hierarchical basis functions are normalized with respect to the $\|\cdot\|_{a}$ norm. Hence, $a(\hat{\varphi}_{i},\hat{\varphi}_{j})\leq 1$ for any $i,j\in \{1,\ldots,N_L\}$. Using the Gershgorin Circle Theorem, we observe that $\max_{\mathbf{j}\in\{1,\ldots,L\}}\lambda_{\mathrm{max}}\big(\hat{\mathbb{A}}_{\mathbf{jj}}^{(H_L)}\big)$ is bounded by a small number (pessimistically by the maximum number of basis functions whose supports partially overlap that of a given basis function). This implies that the factor $\big(\max_{\mathbf{j}\in\{1,\ldots,L\}}\lambda_{\mathrm{max}}\big(\hat{\mathbb{A}}_{\mathbf{jj}}^{(H_L)}\big)+  \|\delta_{H_L}\|_{2}\big)^{\frac{1}{2}}$ in (\ref{Error-Estimate-4}) is expected to be small. Consequently, our estimate in (\ref{Error-Estimate-4}) tells us that the truncation error could become relatively more significant as the smallest eigenvalue among the blocks decreases. Additionally, we see from (\ref{Error-Estimate-4}) that the r.h.s.\ increases with the number of levels. However, based on Remark \ref{rmk:a-Inner-Product-value}, the quantities $\|\delta_{\mathbf{i}}\|$ can be made as small as necessary by choosing large enough supporting patches for the basis functions, allowing for the control of the above estimate and a small `matrix-truncation error'.
\end{remark}

\subsection{Third compression stage: sparsification by approximating inverses of diagonal blocks of $\check{\mathbb{A}}_{H_L}$}
The inverse of the block-diagonal matrix $\check{\mathbb{A}}_{H_L}$ is another block-diagonal matrix whose diagonal blocks are the inverses of the corresponding blocks of $\check{\mathbb{A}}_{H_L}$. Note that the inverse of each block is a full matrix (see relationship of the inverse of a matrix and its characteristic polynomial using the Cayley-Hamilton theorem). 

The $i$-th column of the inverse of the $j$-th diagonal block of $\check{\mathbb{A}}_{H_L}$ can be obtained by solving the linear system of equations
\begin{equation*}
\hat{\mathbb{A}}_{\mathbf{jj}}^{(H_L)}\mathbf{c^{(i)}}=\mathbf{e_i},
\end{equation*}
where $\hat{\mathbb{A}}_{\mathbf{jj}}^{(H_L)}$ denotes the $j$-th diagonal block of both $\check{\mathbb{A}}_{H_L}$ and $\hat{\mathbb{A}}_{H_L}$.
If we solve the above linear system iteratively with the Conjugate Gradient method (CG), we obtain the following error bound.
\begin{equation*}
\big\|\mathbf{c^{(i)}}-\mathbf{c}^{(\mathbf{i},k)}\big\|_{\hat{\mathbb{A}}_{\mathbf{jj}}^{(H_L)}}\leq 2\left(\frac{\sqrt{\kappa\big(\hat{\mathbb{A}}_{\mathbf{jj}}^{(H_L)}\big)}-1}{\sqrt{\kappa\big(\hat{\mathbb{A}}_{\mathbf{jj}}^{(H_L)}\big)}+1}\right)^{k}\big\|\mathbf{c^{(i)}}-\mathbf{c}^{(\mathbf{i},0)}\big\|_{\hat{\mathbb{A}}_{\mathbf{jj}}^{(H_L)}}
\end{equation*}
where the $k$-th CG iterate is denoted by $\mathbf{c}^{(\mathbf{i},k)}$. Thus, the column $\mathbf{c^{(i)}}$ could be well approximated with a few CG iterations provided the condition number of $\hat{\mathbb{A}}_{\mathbf{jj}}^{(H_L)}$ is small. 

For an idea of the sparsity degree of the approximate inverses of the blocks $\check{\mathbb{A}}_{H_L}$, consider a uniform rectangular Cartesian mesh and a uniform patch-order parameter $m_j$ at level $j$. Then, if we start the CG iterations with $\mathbf{c}^{(\mathbf{i},0)}=\mathbf{0}$, the number of non-zeros of the $k$-th iterate can be bounded by
\begin{equation}\label{nnz-bound}
\textbf{nnz}\big(\mathbf{c}^{(\mathbf{i},k)}\big)\leq 8(2^d-1)\left(2(2k-1)m_{j}^2+m_{j}\right),
\end{equation}
where the bound is tight (i.e., the bound is attained for some $\mathbf{i}\leq \# \hat{\mathcal{B}}_{j}=(2^d-1)N_{j-1} $) provided $k$ is such that the r.h.s.\ of the above inequality is less than $\# \hat{\mathcal{B}}_{j}$.
\begin{remark}
The bound in (\ref{nnz-bound}) can be obtained by considering $\hat{\mathbb{A}}_{\mathbf{jj}}^{(H_L)}$ as the adjacency matrix of a weighted graph $\mathcal{G}$. Then, the number of non-zeros of $\mathbf{c}^{(\mathbf{i},k)}$ (for $k\geq 1$) equals the number of vertices of $\mathcal{G}$ connected to the vertex $\mathbf{i}$ by a path (union of edges) of length not greater than $k$, considering the vertex $\mathbf{i}$ connected to itself.
\end{remark}

From what is stated above, we can see that if the condition number of $\hat{\mathbb{A}}_{\mathbf{jj}}^{(H_L)}$ is small, we can accurately approximate its inverse by a cheaply-computable sparse matrix $\mathbb{S}^{(k)}\in \mathbb{R}^{N_{L}\times N_{L}}$, where
\begin{equation*}
\mathbb{S}^{(k)}_{i,j}=\mathbf{c}^{(\mathbf{j},k)}_i.
\end{equation*}

Define $\bar{\mathfrak{S}}:\mathbb{R}^{N_L} \rightarrow \mathbb{R}^{N_L}$ such that $\bar{\mathfrak{S}}(\mathbf{x})=\mathbb{S}^{(k)}\mathbf{x}$. Let $\mathcal{S}:L^2(\Omega)\rightarrow \hat{\mathcal{V}}_{H_{L}}$ be given by
\begin{equation}\label{S-third-compression-operator}
\bar{\mathcal{S}}=\mathcal{L}\circ \bar{\mathfrak{S}} \circ \mathcal{R}.
\end{equation}
The next theorem estimates of how good $\bar{\mathcal{S}}$ is as an approximation of the operator $\check{\mathcal{S}}$.

\begin{theorem}\label{Thm:Third-compression}
Let $\check{\mathcal{S}}$ and $\bar{\mathcal{S}}$ be the operators defined by (\ref{S-check-def}) and (\ref{S-third-compression-operator}), respectively. Let $\delta_{H_L}:= \hat{\mathbb{A}}_{H_L}-\check{\mathbb{A}}_{H_L}$. Moreover, let $\delta^{\text{CG}}>0$ and $k_{\delta^{\text{CG}}}\in \mathbb{N}$ be such that $\left\|\check{\mathbb{A}}_{H_L}^{-1}-\mathbb{S}^{(k)}\right\|_2\leq \delta^{\text{CG}}$. Then, the following estimate holds.
\begin{equation}\label{Third-Compression-Estimate-Statement}
\|\check{\mathcal{S}}f-\bar{\mathcal{S}}f\|_{a}\leq \Big(\max_{\mathbf{j}\in\{1,\ldots,N_L\}}\lambda_{\mathrm{max}}\big(\hat{\mathbb{A}}_{\mathbf{jj}}^{(H_L)}\big) + \left\|\delta_{H_L} \right\|_2 \Big)^{\frac{1}{2}}\delta^{\text{CG}}\sqrt{N_E L}\frac{\mathrm{diam}(\Omega)}{\pi \sqrt{\alpha}}\|f\|_{L^2(\Omega)},
\end{equation}
where $\hat{\mathbb{A}}_{\mathbf{jj}}^{(H_L)}$ is the $j$-th diagonal block of $\check{\mathbb{A}}_{H_L}$. 
\end{theorem}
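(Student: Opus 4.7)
The plan is to reduce the statement to a linear-algebra estimate and then control each piece separately. First I would write
\begin{equation*}
\check{\mathcal{S}}f-\bar{\mathcal{S}}f = \mathcal{L}(\mathbf{c}_d), \qquad \mathbf{c}_d := \bigl(\check{\mathbb{A}}_{H_L}^{-1}-\mathbb{S}^{(k)}\bigr)\mathcal{R}f,
\end{equation*}
and expand the energy norm through the basis: by the definition of $\hat{\mathbb{A}}_{H_L}$ one has $\|\mathcal{L}(\mathbf{c}_d)\|_a^2 = \mathbf{c}_d^{T}\hat{\mathbb{A}}_{H_L}\mathbf{c}_d$. Splitting $\hat{\mathbb{A}}_{H_L}=\check{\mathbb{A}}_{H_L}+\delta_{H_L}$ and bounding the quadratic form by the spectral radius of each summand gives
\begin{equation*}
\|\check{\mathcal{S}}f-\bar{\mathcal{S}}f\|_a^2 \leq \bigl(\|\check{\mathbb{A}}_{H_L}\|_2+\|\delta_{H_L}\|_2\bigr)\|\mathbf{c}_d\|_2^2.
\end{equation*}
Since $\check{\mathbb{A}}_{H_L}$ is block-diagonal, $\|\check{\mathbb{A}}_{H_L}\|_2 = \max_{\mathbf{j}} \lambda_{\max}(\hat{\mathbb{A}}_{\mathbf{jj}}^{(H_L)})$, which already reproduces the prefactor appearing in \eqref{Third-Compression-Estimate-Statement}.

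Next, the coefficient-vector error is directly controlled by the CG tolerance: by hypothesis $\|\mathbf{c}_d\|_2 \leq \delta^{\text{CG}}\|\mathcal{R}f\|_2$. The only non-routine step is bounding $\|\mathcal{R}f\|_2$ by $\|f\|_{L^2(\Omega)}$. For this I would use Cauchy--Schwarz entrywise, noting that $\hat{\varphi}_i$ is supported on $\Omega_i := \mathrm{supp}(\hat{\varphi}_i)\subset \omega_{J_i}$ and belongs to $H^1_0(\omega_{J_i})$ by construction. The Poincar\'e--Friedrichs inequality together with the spectral bound \eqref{A-spectral-bound} and the normalization $\|\hat{\varphi}_i\|_a=1$ yields the uniform estimate
\begin{equation*}
\|\hat{\varphi}_i\|_{L^2(\Omega)} \leq \frac{\mathrm{diam}(\Omega)}{\pi}\|\nabla\hat{\varphi}_i\|_{L^2(\Omega)} \leq \frac{\mathrm{diam}(\Omega)}{\pi\sqrt{\alpha}}\|\hat{\varphi}_i\|_a = \frac{\mathrm{diam}(\Omega)}{\pi\sqrt{\alpha}}.
\end{equation*}

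Then I would combine these pointwise with a finite-overlap argument:
\begin{equation*}
\|\mathcal{R}f\|_2^2 = \sum_i (f,\hat{\varphi}_i)_{L^2(\Omega)}^2 \leq \frac{\mathrm{diam}(\Omega)^2}{\pi^2\alpha}\int_\Omega |f(x)|^2 \sum_i \chi_{\Omega_i}(x)\,\mathrm{d}x.
\end{equation*}
At each level $\ell$ every point of $\Omega$ lies in at most $N_E$ of the supporting patches (since each patch contains at most $N_E$ elements and the patches are built around a single central element per index), and summing over the $L$ levels gives the bound $\sum_i \chi_{\Omega_i}\leq N_E L$ on $\Omega$. Hence $\|\mathcal{R}f\|_2 \leq \frac{\mathrm{diam}(\Omega)}{\pi\sqrt{\alpha}}\sqrt{N_E L}\,\|f\|_{L^2(\Omega)}$, and inserting this together with the earlier estimates produces the claimed inequality \eqref{Third-Compression-Estimate-Statement}.

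The routine algebra is straightforward; the only delicate point is the overlap count $\sum_i \chi_{\Omega_i}\leq N_E L$. This is essentially a combinatorial/geometric fact about the hierarchical patch structure introduced in \cref{Sec-Construction}: one needs that the per-patch function multiplicity ($2^d-1$ basis functions per patch for $\ell>0$, one per patch for $\ell=0$) can be absorbed into the definition of $N_E$. This is the same kind of finite-overlap argument already used in \cref{Lemma-error-estimate-localization-more-stable}, so no new ingredient is required beyond reusing that observation.
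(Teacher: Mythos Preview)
Your proposal is correct and essentially identical to the paper's own proof: the paper also writes $\mathbf{d}=(\check{\mathbb{A}}_{H_L}^{-1}-\mathbb{S}^{(k)})\mathcal{R}f$, expands $\|\check{u}-\bar{u}\|_a^2=\mathbf{d}^T(\check{\mathbb{A}}_{H_L}+\delta_{H_L})\mathbf{d}$, and bounds $\|\mathcal{R}f\|_2^2$ via Cauchy--Schwarz, Poincar\'e--Friedrichs, $\|\hat{\varphi}_i\|_a=1$, and the same $N_E L$ overlap count. Your remark that the per-patch multiplicity $2^d-1$ is absorbed into $N_E$ is the only point the paper leaves implicit, and it is correct.
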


\begin{proof}
Let $f\in L^2(\Omega)$, $\check{\mathcal{S}}f=\check{u}=\sum_{i=1}^{N_L}\check{d}_i \hat{\varphi}_{i}$, $\bar{\mathcal{S}}f=\bar{u}=\sum_{i=1}^{N_L}\bar{d}_i \hat{\varphi}_{i}$, $\mathbf{d}=[\check{d}_1-\bar{d}_1,\ldots,\check{d}_{N_L}-\bar{d}_{N_L}]$, and $\Omega_i=\mathrm{supp}(\hat{\varphi}_i)$. Note that
\begin{eqnarray}\label{Rf-bound}
\nonumber \|\mathcal{R}f\|_2^2 = \sum_{i=1}^{N_L}(f,\hat{\varphi}_i)_{L^2(\Omega)}^2\leq  \sum_{i=1}^{N_L}\|f\|^2_{L^2(\Omega_i)} \|\hat{\varphi}_i\|^2_{L^2(\Omega_i)} 
 &\leq&  \sum_{i=1}^{N_L}\|f\|^2_{L^2(\Omega_i)}\left(\frac{\mathrm{diam}(\Omega)}{\pi \sqrt{\alpha}}\|\hat{\varphi}_i\|_{a}\right)^2\\ 
&\leq&  \frac{\mathrm{diam}^2(\Omega)}{\pi^2\alpha} N_E L \|f\|^2_{L^2(\Omega)} ,
\end{eqnarray}
since $\|\hat{\varphi}_i\|_a=1$ by construction. Then, noticing that $\mathbf{d}=\big( \check{\mathbb A}_{H_L}^{-1}-\mathbb{S}^{(k)}\big)\mathcal{R}f$, and using \eqref{Rf-bound}, it follows that
\begin{eqnarray}\label{Third-Compression-Estimate-proof}
\nonumber \| \check{u}- \bar{u} \|^{2}_{a}&=&\Big\|\sum_{i=1}^{N_L}(\check{d}_i-\bar{d}_i)  \hat{\varphi}_{i} \Big\|_{a}^{2}=\mathbf{d}^{T}\hat{\mathbb{A}}_{H_L}\mathbf{d}=\mathbf{d}^{T}\left(\check{\mathbb{A}}_{H_L}+\delta_{H_L}\right)\mathbf{d}\leq  \left( \left\|\check{\mathbb{A}}_{H_L}\right\|_2 + \left\|\delta \right\|_2 \right)\|\mathbf{d}\|_2^2 \\
\nonumber &\leq& \left( \left\|\check{\mathbb{A}}_{H_L}\right\|_2 + \left\|\delta \right\|_2 \right)\left(\|\check{\mathbb A}_{H_L}^{-1}-\mathbb{S}^{(k)}\|_2 \|\mathcal{R}f\|_2\right)^2\\ 
& \leq &\left( \left\|\check{\mathbb{A}}_{H_L}\right\|_2 + \left\|\delta \right\|_2 \right)\Big(\delta^{\text{CG}}\frac{\mathrm{diam}(\Omega)}{\pi \sqrt{\alpha}}\sqrt{N_E L}\|f\|_{L^2(\Omega)}\Big)^2.
\end{eqnarray}

From (\ref{Third-Compression-Estimate-proof}), and noticing that 
$\left\|\check{\mathbb{A}}_{H_L}\right\|_2=\max_{\mathbf{j}\in\{1,\ldots,L\}}\lambda_{\mathrm{max}}\big(\hat{\mathbb{A}}_{\mathbf{jj}}^{(H_L)}\big)$,
the estimate in (\ref{Third-Compression-Estimate-Statement}) follows.
\end{proof}

\subsection{Fourth compression stage: sparsification by discarding entries of $\mathbb{S}^{(k)}$ with absolute values below a prescribed tolerance}
Let $\mathbb{S}_{\epsilon}\in \mathbb{R}^{N_L \times N_L}$ be the matrix obtained after discarding the entries of $\mathbb{S}^{(k)}$ smaller than a given tolerance $\epsilon>0$, and let $N_{\epsilon}$ be the maximum possible number of non-zero entries in any row or column of $\mathbb{S}_{\epsilon}$. It follows that
\begin{equation}\label{matrix-difference-2-norm-bound-by-max}
\|\mathbb{S}_{\epsilon}-\mathbb{S}^{(k)}\|_2\leq\sqrt{\|\mathbb{S}_{\epsilon}-\mathbb{S}^{(k)}\|_1 \|\mathbb{S}_{\epsilon}-\mathbb{S}^{(k)}\|_{\infty}}\leq N_{\epsilon} \epsilon.
\end{equation}
With $\mathfrak{S}_{\epsilon}:\mathbb{R}^{N_L} \rightarrow \mathbb{R}^{N_L}$ such that $\mathfrak{S}_{\epsilon}(\mathbf{x})=\mathbb{S}_{\epsilon}\mathbf{x}$, define $\mathcal{S}_\epsilon:L^{2}(\Omega)\rightarrow \hat{\mathcal{V}}_{H_{L}}$ such that
\begin{equation}\label{Fourth-compression-operator}
\mathcal{S}_{\epsilon}=\mathcal{L}\circ \mathfrak{S}_{\epsilon} \circ \mathcal{R}.
\end{equation}

With the same procedure employed in Theorem \ref{Thm:Third-compression}, and using (\ref{matrix-difference-2-norm-bound-by-max}), we obtain the following approximation result.
\begin{theorem}
Let $\epsilon>0$, and $\mathcal{S}_{\epsilon}$ and $\bar{\mathcal{S}}$ be the operators defined in (\ref{S-third-compression-operator}) and (\ref{Fourth-compression-operator}), respectively. Then, the estimate
\begin{equation*}
\|\bar{\mathcal{S}}f-\mathcal{S}_{\epsilon}f\|_{a}\leq \left(\max_{\mathbf{j}\in\{1,\ldots,N_L\}}\lambda_{\mathrm{max}}\big(\hat{\mathbb{A}}_{\mathbf{jj}}^{(H_L)}\big) + \left\|\delta_{H_L} \right\|_2 \right)^{\frac{1}{2}}\epsilon \sqrt{N_E L}N_{\epsilon}\frac{\mathrm{diam}(\Omega)}{\pi \sqrt{\alpha}}\|f\|_{L^2(\Omega)}
\end{equation*}
holds, where $\hat{\mathbb{A}}_{\mathbf{jj}}^{(H_L)}$ is the $j$-th diagonal block of $\check{\mathbb{A}}_{H_L}$.
\end{theorem}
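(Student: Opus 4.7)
The plan is to mirror the proof of \Cref{Thm:Third-compression} almost verbatim, replacing the role of the CG approximation error $\delta^{\mathrm{CG}}$ with the thresholding error bound from \eqref{matrix-difference-2-norm-bound-by-max}. Since the operator $\mathcal{S}_\epsilon$ differs from $\bar{\mathcal{S}}$ only through the sparsified matrix $\mathbb{S}_\epsilon$ replacing $\mathbb{S}^{(k)}$, the structure of the argument transfers directly and no new ingredient beyond \eqref{matrix-difference-2-norm-bound-by-max} is needed.

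First, I would fix $f\in L^2(\Omega)$ and expand $\bar{\mathcal{S}}f = \sum_i \bar{d}_i \hat{\varphi}_i$ and $\mathcal{S}_\epsilon f = \sum_i d^{\epsilon}_i \hat{\varphi}_i$ in the hierarchical basis. Let $\mathbf{d} = (\bar{d}_i - d^\epsilon_i)_{i=1}^{N_L}$; by the definitions of $\bar{\mathcal{S}}$ and $\mathcal{S}_\epsilon$, this coincides with $(\mathbb{S}^{(k)} - \mathbb{S}_\epsilon)\mathcal{R}f$. Then I would express $\|\bar{\mathcal{S}}f - \mathcal{S}_\epsilon f\|_a^2 = \mathbf{d}^T \hat{\mathbb{A}}_{H_L}\mathbf{d}$ and split $\hat{\mathbb{A}}_{H_L} = \check{\mathbb{A}}_{H_L} + \delta_{H_L}$, yielding the operator-norm bound
\begin{equation*}
\|\bar{\mathcal{S}}f-\mathcal{S}_\epsilon f\|_a^2 \leq \Bigl(\|\check{\mathbb{A}}_{H_L}\|_2 + \|\delta_{H_L}\|_2\Bigr)\|\mathbf{d}\|_2^2,
\end{equation*}
where $\|\check{\mathbb{A}}_{H_L}\|_2 = \max_{\mathbf{j}} \lambda_{\max}(\hat{\mathbb{A}}_{\mathbf{jj}}^{(H_L)})$ since $\check{\mathbb{A}}_{H_L}$ is block-diagonal.

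Next, I would control $\|\mathbf{d}\|_2$ by using \eqref{matrix-difference-2-norm-bound-by-max} to obtain $\|\mathbf{d}\|_2 \leq \|\mathbb{S}^{(k)} - \mathbb{S}_\epsilon\|_2 \|\mathcal{R}f\|_2 \leq N_\epsilon \epsilon \|\mathcal{R}f\|_2$, and then bound $\|\mathcal{R}f\|_2$ by the estimate \eqref{Rf-bound} already established inside the proof of \Cref{Thm:Third-compression}, namely
\begin{equation*}
\|\mathcal{R}f\|_2 \leq \frac{\mathrm{diam}(\Omega)}{\pi\sqrt{\alpha}}\sqrt{N_E L}\,\|f\|_{L^2(\Omega)}.
\end{equation*}
Combining these three ingredients and taking square roots gives exactly the announced inequality.

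There is no real obstacle here; the only bookkeeping point is to make sure that the factor $N_\epsilon \epsilon$ from the thresholding takes the exact slot that $\delta^{\mathrm{CG}}$ occupied in \Cref{Thm:Third-compression}, and that the Poincar\'e-Friedrichs based estimate for $\|\mathcal{R}f\|_2$ (which uses $\|\hat{\varphi}_i\|_{L^2(\Omega_i)} \leq \tfrac{\mathrm{diam}(\Omega)}{\pi\sqrt{\alpha}}\|\hat{\varphi}_i\|_a$ together with $\|\hat{\varphi}_i\|_a = 1$ and the patch-overlap counting constant $N_E L$) is re-invoked rather than re-derived. The resulting proof is then a couple of lines.
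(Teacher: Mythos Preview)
Your proposal is correct and is exactly what the paper does: it states that the result follows ``with the same procedure employed in Theorem~\ref{Thm:Third-compression}, and using \eqref{matrix-difference-2-norm-bound-by-max},'' without spelling out further details. Your write-up is in fact more explicit than the paper's, but the argument is identical.
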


\subsection{Overall compression error of the approximate solution}
Collecting the approximation results from the four compressions stages, we obtain the following error of the approximate solution obtained with the sparse-compressed solution operator $\mathcal{S}=\mathcal{S}_{\epsilon}$.

\begin{theorem}
The error in the energy norm of the approximate solution to \eqref{Continuous-Formulation} obtained with the sparse-compressed operator $\mathcal{S}=\mathcal{S}_{\epsilon}$ arising after the four compression stages is given by
\begin{eqnarray}\label{Overall-Error-Estimate}
\nonumber \|\mathcal{A}^{-1}f-\mathcal{S}f\|_a &\leq& \|\mathcal{A}^{-1}f-\mathcal{A}_{h}^{-1}f\|_a \\ \nonumber 
&+& \frac{H_L}{\pi \sqrt{\alpha}} \left\|f-\Pi_{H_L}f\right\|_{L^2(\Omega)}\\ \nonumber
&+& C_{\sharp}\frac{\sqrt{N_E L}}{\sqrt{\alpha}} \left(  \frac{\zeta \sigma }{\sqrt{\lambda_{\mathrm{min}}(\mathbf{G})L}}+\frac{\delta^{\text{tr}}}{\sqrt{\lambda_{\mathrm{min}}\big(\hat{\mathbb{A}}_{H_L}\big)}}+\delta^{\text{CG}}+N_{\epsilon}\epsilon \right) \left\|f\right\|_{L^2(\Omega)},
\end{eqnarray}
where $C_{\sharp}$ is given by
\begin{equation*}
C_{\sharp}= \left(\max_{\mathbf{j}\in\{1,\ldots,N_L\}}\lambda_{\mathrm{max}}\left(\hat{\mathbb{A}}_{\mathbf{jj}}^{(H_L)}\right) + \left\|\delta_{H_L} \right\|_2 \right)^{\frac{1}{2}}\left(1+\frac{\mathrm{diam}(\Omega)}{\pi}\right),
\end{equation*}
$\delta^{CG}>0$ and $\epsilon>0$ are small parameters proportional to the prescribed accuracy, and
\begin{equation*}
\delta^{\text{tr}}=\frac{1}{\sqrt{N_E}}\max_{\mathbf{i}\in\{1,\ldots,L\}}\frac{\|\delta_{\mathbf{i}}\|_2}{\lambda_{\mathrm{min}}\big(\hat{\mathbb{A}}_{\mathbf{ii}}^{(H_L)}\big)}
\end{equation*}
is a quantity arising from the truncation of off-block-diagonal entries of $\hat{\mathbb{A}}_{H_L}$.
\end{theorem}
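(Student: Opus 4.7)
The plan is a straightforward triangle-inequality telescoping along the chain of four approximations
\begin{equation*}
\mathcal{A}^{-1}f \;\longrightarrow\; \mathcal{A}^{-1}_h f \;\longrightarrow\; \hat{\mathcal{S}}f \;\longrightarrow\; \check{\mathcal{S}}f \;\longrightarrow\; \bar{\mathcal{S}}f \;\longrightarrow\; \mathcal{S}_\epsilon f,
\end{equation*}
where each step is controlled by one of the four previously established compression theorems. Concretely, I would write
\begin{equation*}
\|\mathcal{A}^{-1}f-\mathcal{S}f\|_a \;\leq\; \|\mathcal{A}^{-1}f-\hat{\mathcal{S}}f\|_a + \|\hat{\mathcal{S}}f-\check{\mathcal{S}}f\|_a + \|\check{\mathcal{S}}f-\bar{\mathcal{S}}f\|_a + \|\bar{\mathcal{S}}f-\mathcal{S}_\epsilon f\|_a
\end{equation*}
and then feed in the four pre-proved bounds term by term. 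The first summand directly supplies the first two lines of \eqref{Overall-Error-Estimate} together with the $\zeta \sigma / \sqrt{\lambda_{\min}(\mathbf{G})}$ contribution; the third compression estimate contributes the $\delta^{\text{CG}}$ term; the fourth compression estimate contributes the $N_\epsilon \epsilon$ term. All three of the latter already appear in the required form up to the common prefactor $C_\sharp \sqrt{N_E L}/\sqrt{\alpha}$, which one obtains simply by factoring out $\bigl(\max_j \lambda_{\max}(\hat{\mathbb{A}}^{(H_L)}_{jj})+\|\delta_{H_L}\|_2\bigr)^{1/2}$ and bounding $\mathrm{diam}(\Omega)/\pi \leq 1+\mathrm{diam}(\Omega)/\pi$.

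The only nontrivial bookkeeping is reshaping the second-stage bound into the $\delta^{\text{tr}}$ form. Starting from
\begin{equation*}
\|\hat{\mathcal{S}}f-\check{\mathcal{S}}f\|_a \;\leq\; \Bigl[\bigl(\max_{\mathbf{j}}\lambda_{\max}\bigl(\hat{\mathbb{A}}^{(H_L)}_{\mathbf{jj}}\bigr) + \|\delta_{H_L}\|_2\bigr)\sum_{\mathbf{i}}\frac{\|\delta_{\mathbf{i}}\|_2^2}{\lambda_{\min}^2\bigl(\hat{\mathbb{A}}^{(H_L)}_{\mathbf{ii}}\bigr)}\Bigr]^{1/2} \frac{\mathrm{diam}(\Omega)\|f\|_{L^2(\Omega)}}{\pi\sqrt{\lambda_{\min}(\hat{\mathbb{A}}_{H_L})\alpha}},
\end{equation*}
I would bound the sum over the $L$ diagonal blocks by $L$ times its maximum, rewrite that maximum via $\delta^{\text{tr}}$ (which carries the $1/\sqrt{N_E}$), and pull out the common prefactor $C_\sharp$. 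This yields exactly the $\delta^{\text{tr}}/\sqrt{\lambda_{\min}(\hat{\mathbb{A}}_{H_L})}$ contribution, completing the match with \eqref{Overall-Error-Estimate}.

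The only potential obstacle is a cosmetic one: making sure the common prefactor from each of the four stages collapses cleanly to a single $C_\sharp$ without double-counting the $(1+\mathrm{diam}(\Omega)/\pi)$ factor that appears in the first-stage bound (via \cref{Lemma-Localization-Error-bound-conormalDer}) but not in the other three, where instead a bare $\mathrm{diam}(\Omega)/\pi$ appears. Using $\mathrm{diam}(\Omega)/\pi \leq 1+\mathrm{diam}(\Omega)/\pi$ absorbs this harmlessly. No new estimates are required beyond the four previously proven theorems and elementary $\ell^2$-bookkeeping.
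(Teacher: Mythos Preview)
Your proposal is correct and matches the paper's approach, which simply states that the result follows by ``collecting the approximation results from the four compression stages'' without writing out any details; the triangle-inequality telescoping you describe is exactly that collection. One small bookkeeping point you glossed over: the first-stage localization term carries the factor $(1+\mathrm{diam}(\Omega)/\pi)$ but \emph{not} the factor $\bigl(\max_{\mathbf{j}}\lambda_{\max}(\hat{\mathbb{A}}^{(H_L)}_{\mathbf{jj}})+\|\delta_{H_L}\|_2\bigr)^{1/2}$, so to absorb it into $C_\sharp$ you need this square-root factor to be at least $1$---which it is, since the basis is energy-normalized and hence each block has unit diagonal, forcing $\lambda_{\max}\geq 1$.
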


\begin{remark}\label{rem:blockwise solution}
Alternatively, in particular when the condition number of the diagonal blocks of $\hat{\mathbb{A}}_{H_L}$ are not sufficiently small to justify the computation of the sparse-compressed operator $\mathcal{S}$, we could obtain an approximate solution to \eqref{Continuous-Formulation} by finding $u_{h,L}^{m}=\sum_{\ell=0}^{L}u^{m}_{\ell}\in \hat{\mathcal{V}}_{L}$, where $u^{m}_{\ell}\in \mathbf{span}\;\hat{\mathcal{B}}_{\ell}$ is obtained by solving 
\begin{equation*}
a(u^{m}_{\ell},v)=(f,v)_{L^2(\Omega)}\quad \text{for all }v \in \mathbf{span}\;\hat{\mathcal{B}}_{\ell}.
\end{equation*}
Note that in this case $u_{h,L}^{m}$ is equivalent to the approximate solution obtained after the first two compression stages previously discussed.
\end{remark}

\section{Numerical Experiments}\label{sec:Numerical-Experiments}
In this section, we present numerical experiments that validate our preceding theoretical findings. We consider two types of coefficients: a highly-heterogeneous piecewise-constant one, and coefficients with high-contrast channels; see \cref{fig:coeffs} for an example of each. In all cases, we utilize uniform Cartesian meshes over the unit square $\Omega = (0,1)^2$ where the mesh size denotes the side length of the elements. 

For the solution of local patch problems we use a fine mesh (see definition of the local solution operator $\mathcal{A}^{-1}_{h,\omega}$) with mesh size $h=2^{-9}$ for the piecewise-constant coefficient and with mesh size $h=2^{-10}$ for the high-contrast channel case. We denote the fully discrete numerical approximation of the solution to \eqref{Continuous-Formulation} by $u_{h,L}^m$. To calculate this approximation, we do not explicitly compute the inverse of the stiffness matrix or its blocks. Instead, $u_{h,L}^m$ is calculated according to \Cref{rem:blockwise solution}. For stabilization we use condition \eqref{SLOD-stability-condition} with $\delta_s = 0.5$. Additionally, we compute a reference solution $u_h$ using the standard finite element method with the same fine mesh size $h$ and $\mathcal Q_1$-finite elements.

\begin{figure}
	\centering
	\includegraphics[width=1\linewidth]{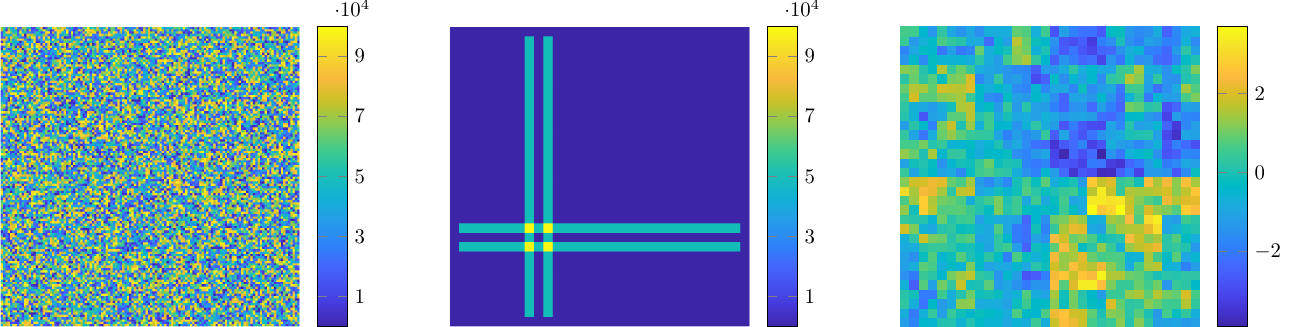}
	\caption{Piecewise-constant coefficient (left), coefficient with high-contrast channels (middle) and piecewise-constant right-hand side $f\in L^2(\Omega)$ with respect to the mesh with mesh size $2^{-5}$ (right).}
	\label{fig:coeffs}
\end{figure}

In our numerical experiments, we consider two types of right-hand sides. First, a smooth right-hand side $f\in H^1(\Omega)$ given by
\begin{equation}\label{smooth_f}
	f(x_1,x_2) = 2\pi^2\sin(\pi x_1)\sin(\pi x_2).
\end{equation}
Second, we employ a piecewise-constant right-hand side $f\in L^2(\Omega)$ with respect to the mesh $\mathcal T_f$ with mesh size $2^{-5}$; see \cref{fig:coeffs}. More precisely, we choose $f=\sum_{\ell=0}^{5}f_{\ell}$ with $f_{\ell}\in \mathbb{Q}^0(\T_{{\ell}})$ and $H_{\ell}=2^{-\ell}$, where the values of each $f_{\ell}$ are randomly chosen within the interval $[-1,1]$. With this choice, we ensure that the contribution of each level to the approximate solution is non-zero.

\subsection{Piecewise-constant coefficients}\label{Subsec:NumExpPwct}
In our first set of experiments, we select a high-contrast coefficient $\mathbf{A}$ which is piecewise-constant with respect to the mesh of mesh size $2^{-7}$. The coefficient assumes independent and identically distributed element values ranging between $\alpha=1$ and $\beta=10^5$. However, in the specific realization we consider, these boundary values are not assumed, resulting in an actual contrast of $10^4$.

\cref{fig:full_stiffness_pcA} illustrates the complete stiffness matrix associated with the HSLOD basis for different values of the patch order $m$. As discussed in \cref{rmk:a-Inner-Product-value}, the practical hierarchical method does not yield a fully $a$-orthogonal basis, leading to the appearance of some non-zero off-block-diagonal entries in the stiffness matrix. However, the $a$-inner product between two non-orthogonal basis functions at distinct levels is small and diminishes further with increasing $m$. Therefore, the block-diagonal stiffness matrix, obtained by discarding all off-block-diagonal entries, and its corresponding inverse serve as good approximations to the full stiffness matrix and its inverse, respectively, particularly for larger $m$.

Small condition numbers of the individual blocks are crucial for the fast computation of their approximate inverses using the Conjugate Gradient method (CG). The condition numbers of the blocks along with the relative residual after seven CG iterations are presented in \cref{table:cg_properties_constA}. The blocks of the stiffness matrix exhibit good conditioning with condition numbers that appear stable across finer levels, leading to a good approximation of the inverse with only a few CG iterations.

The inverse of the block-diagonal stiffness matrix (2nd compression stage), its CG approximation (3rd compression stage) and the sparsification of this approximation (4th compression stage) are illustrated in \cref{fig:sp_inverse_pcA}. After the fourth compression stage, the number of non-zero entries is reduced by a factor of ten compared to the complete block-diagonal inverse, and by a factor greater than two compared to the number of non-zeros of the stiffness matrix used to obtain $u_h$. In \Cref{table:cg_properties_constA}, we observe the relative energy error of the approximate solution obtained after the fourth compression stages using the smooth right-hand side.

\begin{figure}
	\includegraphics[width=1\linewidth]{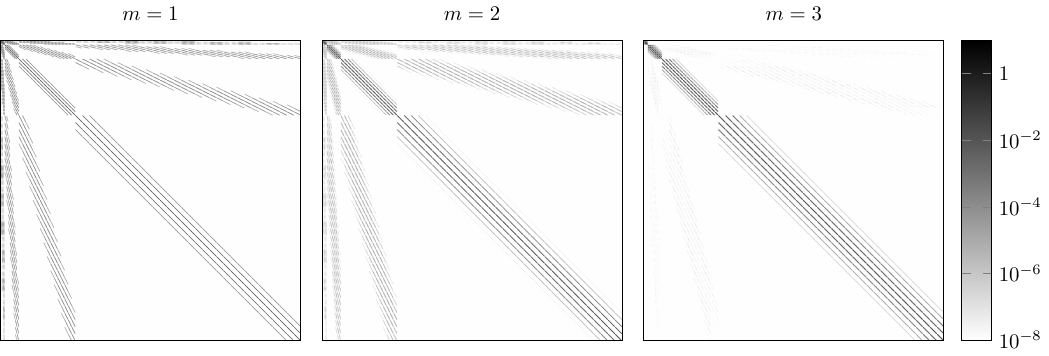}\hfill
	\caption{Sparsity pattern of the complete stiffness matrix for varying patch orders $m$. Utilizing a logarithmic gray-scale for color representation, darker shades indicate larger magnitudes of the corresponding entries, while zero entries are depicted in white.}
	\label{fig:full_stiffness_pcA}
\end{figure}

\begin{table}
	\caption{Properties of the diagonal blocks of the HSLOD stiffness matrix, and  relative energy error of the sparse-compressed operator, for a piecewise-constant coefficient and $m=2$. The condition number of the blocks is displayed for levels $\ell = 1,\dots,6$. Here, $S_{\ell}$ denotes the sparse-compressed approximate-solution operator resulting after the fourth compression stages, with $\ell$ levels of the hierarchy, seven maximum CG iterations to approximate the block inverses, and entries cut-off tolerance $\epsilon = 10^{-5}$.}\label{table:cg_properties_constA}
\begin{center}
  \begin{tblr}{ |p{3cm}||p{1.3cm}|p{1.3cm}|p{1.2cm}|p{1.2cm}|p{1.2cm}|p{1.2cm}|}
  	\hline
  	$\ell$   &1&2& 3&4&5&6\\ [-0.5ex]
  	\hline
  	$H_\ell$   & $2^{-1}$& $2^{-2}$&$2^{-3}$&$2^{-4}$&$2^{-5}$&$2^{-6}$\\[-0.5ex]
  	\hline
  	$\text{cond}(\hat{\mathbb{A}}_{\ell\ell}^{H_\ell})$  & 2&  6&21&14&23&22\\[-0.4ex]
  	\hline
  $\|u_h-\mathcal{S}_{\ell}f \|_a$/$\|u_h\|_a$ &	$0.2298$ &   $0.0493$ &   $0.0118$  &  $0.0030$  &  $0.0008$  &  $0.0003$\\
  \hline 	
  \end{tblr}
\end{center}
\end{table}

\begin{figure}
\includegraphics[width=1\linewidth]{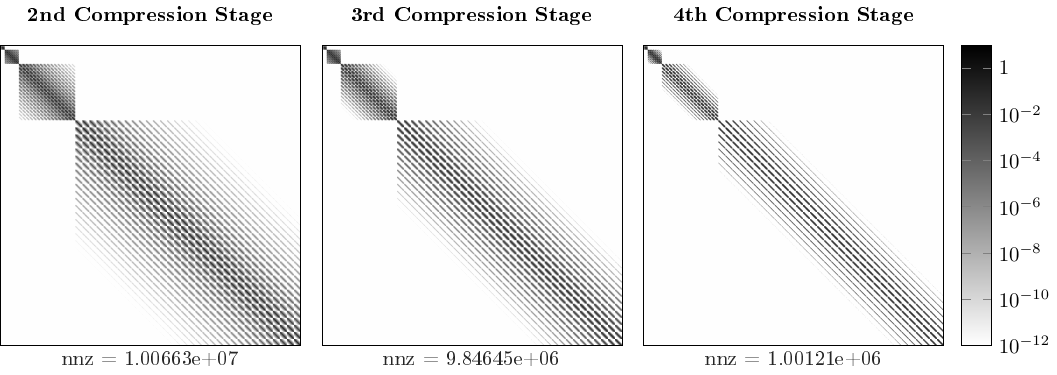}\hfill
\caption{Sparsity pattern and number of non-zeros of the inverse of the stiffness matrix after the different compression stages. Left: inverse of block-diagonal matrix; middle: CG approximation of each inverted block after seven iterations; right: discarding entries of the CG approximation with absolute value smaller than $10^{-5}$. The patch order is set to $m=2$. }
\label{fig:sp_inverse_pcA}
\end{figure}

\cref{fig:H1error_d2_pcA_smoothf} illustrates the relative energy errors of the HSLOD method using the first two compression stages, for different values of the patch order $m$ and coarse mesh size $H_L$, and for both types of right-hand sides $f$. Additionally, the relative energy errors of the hierarchical LOD (HLOD) method are included for comparison. Since the HSLOD basis functions are derived from the corrections of LOD functions, the HLOD method can be obtained by simply omitting these corrections. The HLOD method achieves better accuracy then the gamblets-based method of \cite{Owh17} and exhibits similar error behavior as the stabilized version of \cite{FeP20} introduced in \cite{HaPe21}. Notably, the HSLOD consistently outperforms the HLOD across all displayed parameters $m$, demonstrating superior accuracy. 

In the case of a smooth right-hand side, the error in the energy norm obtained with the HSLOD method exhibits the optimal convergence rate of $\mathcal O(H^2)$ for all levels when $m>1$. For a piecewise-constant right-hand side $f\in L^2(\Omega)$, the expected error rate of $\mathcal O(H)$ is observed, provided that no mesh in the hierarchy resolves the underlying mesh of $f$. However, if the finest mesh in the hierarchy does resolve $\T_f$, the HSLOD solution is exact up to localization error and the error due to omitting the off-block-diagonal entries in the stiffness matrix. This error scales like $\sigma$ (cf. \Cref{rmk:a-Inner-Product-value}), resulting in a superexponential decay over $m$ and significantly smaller errors compared to the HLOD. 

It is also noteworthy that variations in the parameter $\beta$ do not produce significant differences in the error plots or condition numbers. Therefore, for this type of coefficient, the HSLOD method performs well even in the presence of high contrasts.

\begin{figure}
	\centering
	\includegraphics[width=.48\linewidth]{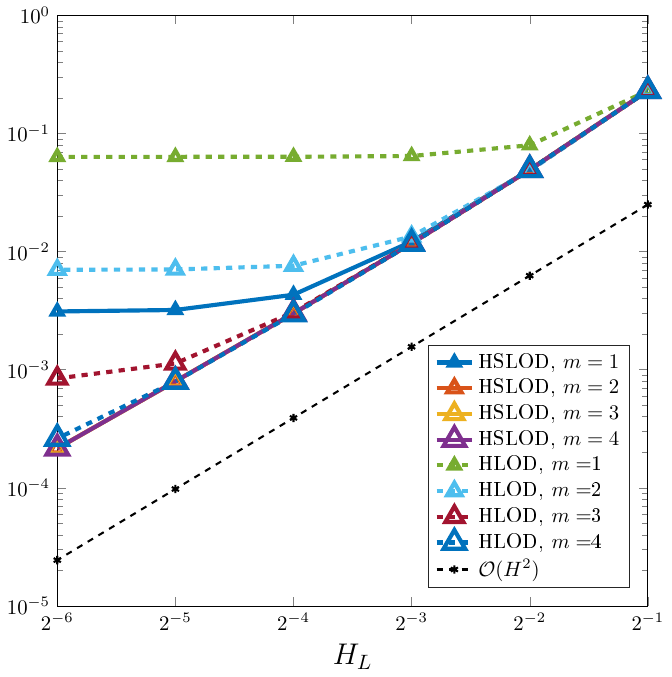}\hfill
	\includegraphics[width=.4855\linewidth]{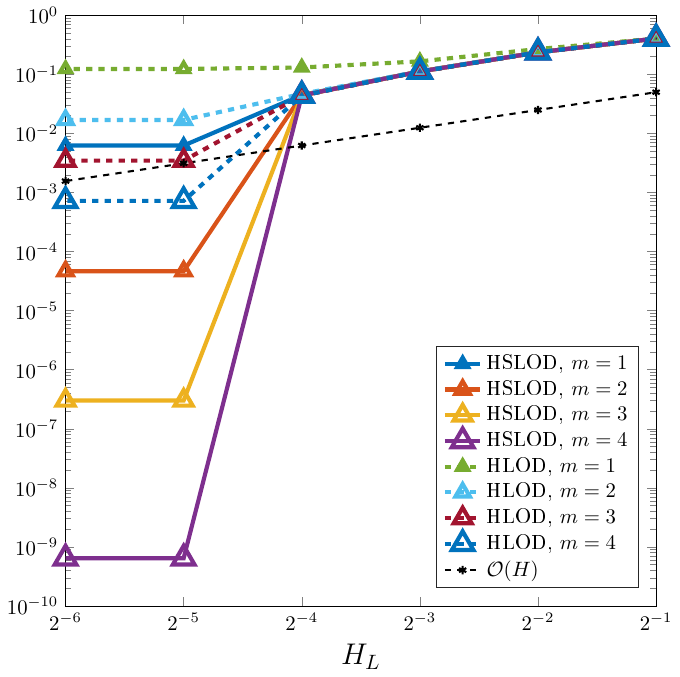}
	\caption{Plot of the relative energy errors $\|u_h - u_{h,L}^m \|_a/\|u_h\|_a$ of the HSLOD and the HLOD in dependence of the mesh size $H_L$ for a piecewise-constant coefficient $\Ab$. Left: errors for a smooth right-hand side, right: errors for a piecewise-constant right-hand side.}
	\label{fig:H1error_d2_pcA_smoothf}
\end{figure}

\subsection{High-contrast channels}
We repeat the above experiments with the second coefficient from \cref{fig:coeffs}, which exhibits high-contrast channels, leading to increasing difficulty in the problem. The channels are defined on a mesh with mesh size $2^{-5}$. We denote the level on which the channels are defined as $\ell_{\Ab}$. The precise definition of the coefficient is as follows:
\begin{equation*}
	\Ab(x)=\Ab(x_1,x_2) \coloneqq \Ab_1(x_1,x_2) + \Ab_1(x_2,x_1),
\end{equation*}
with
\begin{equation*}
	\Ab_1(x)\coloneqq 
	\begin{cases}
		\beta/2, & x\in [\frac{8}{32},\frac{9}{32}] \times [\frac{1}{32},\frac{31}{32}] \cup [\frac{10}{32},\frac{11}{32}] \times [\frac{1}{32},\frac{31}{32}] \\
		1/2, &\text{elsewhere}.
	\end{cases}
\end{equation*}

For this setup, with the smooth right-hand side given by \eqref{smooth_f}, \cref{fig:H1error_d2_hcA_smoothf} illustrates the relative energy errors of the HSLOD and the HLOD methods for different combinations of contrasts and patch orders. Note that no patch contains a whole channel on levels $\ell\geq\ell_{\Ab}$, i.e., on levels associated with finer meshes than the underlying mesh of $\Ab$. We observed that for higher contrasts, larger values of $m$ are necessary to achieve the optimal convergence rate of $\mathcal O(H^2)$ over all levels. As in the case of the highly-heterogeneous piecewise-constant coefficient, the HSLOD consistently outperforms the HLOD for all cases shown, leading to improvements over \cite{PeS16} in particular. Especially for higher contrasts, the superiority of the HSLOD is evident. 

\begin{figure}
	\centering
	\includegraphics[width=.7\linewidth]{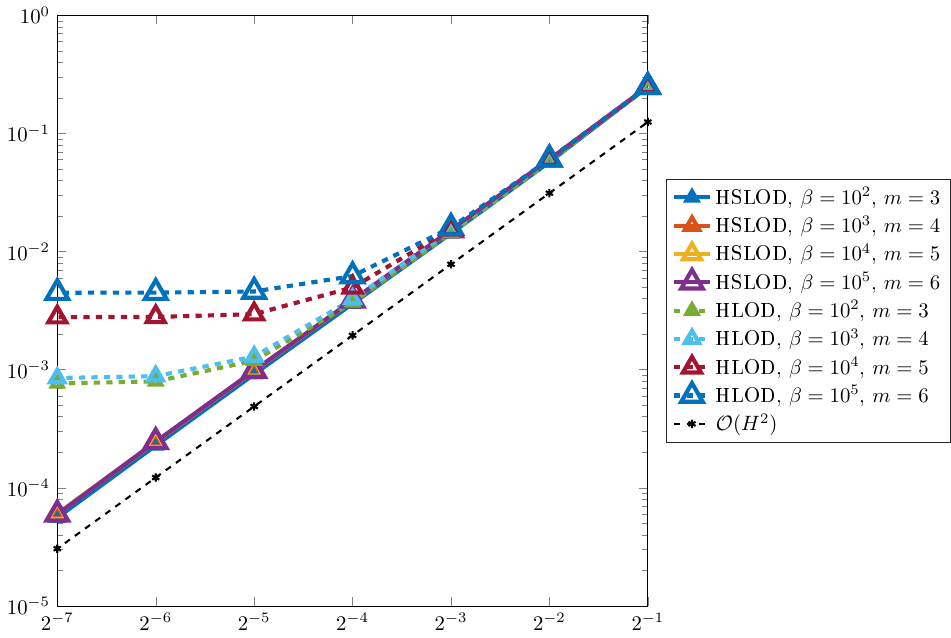}
	\caption{Plot of the relative energy errors $\|u_h - u_{h,L}^m \|_a/\|u_h\|_a$  of the HSLOD and the HLOD in dependence of the mesh size $H_L$ for the coefficient with high-contrast channels and a smooth right-hand-side. Various combinations of $m$ and $\beta$ are displayed.}
	\label{fig:H1error_d2_hcA_smoothf}
\end{figure}

The condition numbers of the blocks of the HSLOD stiffness matrix for the same combinations of $m$ and $\beta$ as in \cref{fig:H1error_d2_hcA_smoothf} are shown in \cref{table:cg_properties_hcA}. For these cases, the condition numbers are high and grow with increasing $\beta$ for levels with mesh sizes close to the width of the channel. For the other levels, however, the condition numbers remain stable over different contrasts. Note that this stability is only observed when the patch order is increased for higher contrasts, leading to our choice of $m = k+1$ for $\beta = 10^k$.

\begin{table}
	\caption{Condition numbers of the blocks of the stiffness matrix for the high-contrast channel coefficient for varying contrasts and patch orders, rounded to two significant figures.}\label{table:cg_properties_hcA}
\begin{center}
  \begin{tblr}{ |p{2.4cm}||p{0.9cm}|p{0.9cm}|p{0.9cm}|p{1.5cm}|p{1cm}|p{0.9cm}|p{0.9cm}|}\hline
		$H_\ell$   & $2^{-1}$& $2^{-2}$&$2^{-3}$&$2^{-4}$&$2^{-5}$&$2^{-6}$& $2^{-7}$\\[-0.5ex]
		\hline
		$\beta = 10^2, m=3$  & 3& 8&  24&110&51&17&17\\[-0.5ex]
		\hline
		$\beta = 10^3, m=4$  & 3& 8&  370&1100&680&76&56\\[-0.5ex]
		\hline
		$\beta = 10^4, m=5$  & 4& 7&  120&21.000&3.000&26&26\\[-0.5ex]
		\hline
		$\beta = 10^5, m=6$  & 4& 7&  120&$1.7\cdot10^6$&33.000&21&21\\[-0.5ex]
		\hline
\end{tblr}
\end{center}
\end{table}

\section{Conclusion}
We derived a superlocalized hierarchical basis for an approximate solution space of (\ref{Continuous-Formulation}), with almost-full $a$-orthogonal basis functions across levels, and where the localization is achieved via the SLOD method. This basis allows for the construction of a compressed operator that serves as a good approximation for the solution operator. The structure of this operator is such that it can be decomposed into the sum of independent operators, allowing for scales decoupling and simultaneous computations that can be exploited to reduce the solution computational time. The resulting method is particularly useful in the presence of rough coefficients, and it can even be extended to the solution of elliptic optimal control problems with rough coefficients \cite{Brenner2022,brenner2024multiscale}.

\appendix
\section{SLOD Stability Analysis}\label{Appendix-SLOD-Stability}
From the definition of SLOD basis functions at level $\ell$ given in Subsection \ref{Stable-SLOD-subsection}, we have for $T \in\mathcal{T}_{\ell,\widetilde{\omega}}$ that
\begin{equation}\label{SLOD_basis_def_apdx}
\hat{\theta}^{\text{SLOD}}_{T,\ell}=\frac{\mathcal{E}_{\widetilde{\omega}}\left(\bar{\psi}^{\text{LOD}}_{T,\ell}+\sum_{K\in \mathcal{T}_{\ell,\widetilde{\omega}}\setminus\{T\}}c_K \bar{\psi}_{K,\ell}^{(T)}\right)}{\left\| \bar{\psi}^{\text{LOD}}_{T,\ell}+\sum_{K\in \mathcal{T}_{\ell,\widetilde{\omega}}\setminus\{T\}}c_K \bar{\psi}_{K,\ell}^{(T)}\right\|_{a_{\widetilde{\omega}}}}\cdot
\end{equation}
It follows that
\begin{equation*}
\Pi_{\ell}\hat{\theta}^{\text{SLOD}}_{T,\ell}=\frac{\chi_{T}+\sum_{K\in \mathcal{T}_{\ell,\widetilde{\omega}}\setminus\{T\}}c_K \chi_{K}}{\left\| \bar{\psi}^{\text{LOD}}_{T,\ell}+\sum_{K\in \mathcal{T}_{\ell,\widetilde{\omega}}\setminus\{T\}}c_K \bar{\psi}_{K,\ell}^{(T)}\right\|_{a_{\widetilde{\omega}}}}\cdot
\end{equation*}
Then, with $z_{T}$ as defined in \eqref{SLOD-stability-condition-weight}, $\|z_{T}^{-1}\Pi_{{\ell}}\hat{\varphi}^{\text{SLOD}}_{T,\ell}-\chi_{T}\|_{L^{\infty}(\Omega)}\rightarrow 0$ implies that $(c_K)_{K\in \mathcal{T}_{\ell,\widetilde{\omega}}\setminus\{T\}}\rightarrow \mathbf{0}$, which, in turn, implies $\|\hat{\theta}^{\text{SLOD}}_{T,\ell}-\mathcal{E}_{\widetilde{\omega}}\left(\bar{\psi}^{\text{LOD}}_{T,\ell}\right) / \|\bar{\psi}^{\text{LOD}}_{T,\ell} \|_{a_{\widetilde{\omega}}} \|_{a}\rightarrow 0$. It is known that the LOD basis is stable and its associated stiffness matrix $\mathbb{A}^{\mathrm{LOD},\ell}$ is well-conditioned \cite{AHP21,FeP20}. Then, since $\|z_{T}^{-1}\Pi_{{\ell}}\varphi^{\text{SLOD}}_{T,\ell}-\chi_{T}\|_{L^{\infty}(\Omega)}\rightarrow 0$ implies $\|\mathbb{A}^{\mathrm{SLOD},\ell}-\mathbb{A}^{\mathrm{LOD},\ell}\|_{\infty}\rightarrow 0$, it follows that
\begin{equation*}
\|z_{T}^{-1}\Pi_{{\ell}}\hat{\varphi}^{\text{SLOD}}_{T,\ell}-\chi_{T}\|_{L^{\infty}(\Omega)}\rightarrow 0 \implies \left| \kappa \left( \mathbb{A}^{\mathrm{SLOD},\ell}  \right) -\kappa\left( \mathbb{A}^{\mathrm{LOD},\ell} \right)\right|\rightarrow 0,
\end{equation*}
i.e., $\mathbb{A}^{\mathrm{SLOD},\ell} $ should be well-conditioned if $\|z_{T}^{-1}\Pi_{{\ell}}\varphi^{\text{SLOD}}_{T,\ell}-\chi_{T}\|_{L^{\infty}(\Omega)}\leq \delta_s$ for small enough $\delta_s\geq 0$.

Note that $\|z_{T}^{-1}\Pi_{{\ell}}\hat{\varphi}^{\text{SLOD}}_{T,\ell}-\chi_{T}\|_{L^{\infty}(\Omega)}$ decreases when the smallest singular value in (\ref{stable_c_computation}) is discarded. Hence, we should obtain a stable basis after discarding enough singular values in (\ref{stable_c_computation}). The superlocalization will be preserved in the resulting basis provided the number of singular values discarded to achieve basis stability is not very large. Note also that the LOD basis is obtained in the limiting case of discarding all singular values in (\ref{stable_c_computation}). The results in \Cref{table:Stability-Condition-Test1} corroborate these analytical findings, where we used $\mu_{\ell}=\max_{T\in \mathcal{T}_{\ell}}\|z_{T}^{-1}\Pi_{{\ell}}\hat{\varphi}^{\text{SLOD}}_{T,\ell}-\chi_{T}\|_{L^{\infty}(\Omega)}$. For these results we employed a highly-heterogeneous piecewise-constant diffusion coefficient with $\beta=2$ and $\alpha=1$, $h=2^{-7}$, $H_0=1$, $\Omega=(0,1)\times(0,1)$, and patch order $m=2$. For the stabilized case, the number of singular values kept in (\ref{stable_c_computation}) is such that the condition \eqref{SLOD-stability-condition} holds with $\delta_s=0.5$ and the retained singular values satisfy $\sigma_1/\sigma_i<10^{15}$. For the unstabilized case, the number of singular values used is $\tilde{r}(T)=r-\#\{\sigma_i:\sigma_1/\sigma_i>10^{15}\}$, where $r$ is the rank of $(\mathbf{B}\mathbf{D})^{T}\mathbf{B}\mathbf{D}$.

\begin{table}[htbp]
\caption{Condition numbers and stability condition values for the stabilized case with $\delta_s = 0.5$, indicated by the subscript, as well as for the unstabilized case.}\label{table:Stability-Condition-Test1}
\begin{center}
  \begin{tblr}{|p{0.3cm}||p{1.7cm}|p{1.9cm}|p{1.1cm}||p{1.7cm}|p{1.9cm}|p{1.1cm}|}\hline
   \centering$\ell$   & \centering$\kappa(\mathbb{A}^{\text{SLOD},\ell}_\mathrm{stab})$& \centering$\kappa(\mathbb{A}^{\text{HSLOD},\ell}_\mathrm{stab})$& \centering$\mu_{\ell,\mathrm{stab}}$ & \centering$\kappa(\mathbb{A}^{\text{SLOD},\ell})$&\centering $\kappa(\mathbb{A}^{\text{HSLOD},\ell})$&\centering $\mu_{\ell}$\\[-0.38ex] \hline
\centering 0  	&	\centering1	& \centering  1 & \centering0&\centering	1	  &	\centering	1	  &\centering\\[-0.7ex] \hline
\centering1  	&	\centering4	& \centering  2 & \centering7.6e-15	& \centering  4	  &	\centering	2	  & \centering7.6e-15\\[-0.7ex] \hline
\centering2  	&	\centering16   & \centering  5 & \centering0.27&\centering	5e+08 &	\centering	5e+07 & \centering1.1\\[-0.7ex] \hline
\centering3  	&\centering	35  & \centering 17 & \centering0.49&	\centering2e+09 &	\centering	3e+08 & \centering11.9\\[-0.7ex] \hline
\centering4  	&\centering	90   & \centering 16& \centering0.5& \centering	2e+09 &	\centering	3e+09 & \centering12.2\\[-0.7ex] \hline
\centering5   &	\centering350  & \centering 26& \centering0.49& \centering	4e+09 &	\centering	6e+09 & \centering19.3\\ [-0.7ex]\hline
  \end{tblr}
\end{center}
\end{table}

\section{Estimate for $\lambda_{\mathrm{min}}(\mathbf{P}^T\mathbf{P})$}\label{Appendix-Eigen-Estimate-for-mesh-indep}
Let $\widetilde{\omega}_{T}:=\widetilde{\omega}_{T}^{(\ell,m)}$, where $\widetilde{\omega}_{T}^{(\ell,m)}$ is defined as in \Cref{Sec-Stable-corrections}. Consider $\hat{\theta}_{\ell,T}^{\text{SLOD}}=\frac{\tilde{\theta}_{\ell,T}^{\text{SLOD}}}{\|\tilde{\theta}_{\ell,T}^{\text{SLOD}}\|_a}$, with 
\begin{equation}
\tilde{\theta}_{\ell,T}^{\text{SLOD}}={\mathcal{E}_{\widetilde{\omega}_{T}}\left(\bar{\psi}^{\text{LOD}}_{\ell,T}+\sum_{K\in \mathcal{T}_{\ell,\widetilde{\omega}_T}\setminus\{T\}}c_K \bar{\psi}_{\ell,K}^{(T)}\right)}.
\end{equation}

Thus, $\hat{\theta}_{\ell,T}^{\text{SLOD}}$ is the energy-norm normalized version of $\tilde{\theta}_{\ell,T}^{\text{SLOD}}$.  Also, let $\tilde{\theta}_{\ell,T}^{\text{LOD}}=\mathcal{E}_{\widetilde{\omega}_{T}}\big(\bar{\psi}^{\text{LOD}}_{\ell,T}\big)$ and $\widetilde{\psi}_{\ell,K}^{(T)}=\mathcal{E}_{\widetilde{\omega}_{T}}\big(\bar{\psi}^{(T)}_{\ell,K}\big)$. Recall that $S_{\omega_{J_i}}:=\{T\in \mathcal{T}_{\ell,\omega_{J_i}}\;:\;\hat{\theta}_{\ell,T}^{\text{SLOD}}=0 \text{ in } \Omega\setminus \omega_{J_i}\}$, with $\omega_{J_i}=\omega_{J_i}^{(\ell,m)}$,  $J_i:=\left\lfloor \frac{i-1} {(2^d-1)^{\min\{\ell,1\}}} \right \rfloor +1$, and $\omega_{J_i}^{(\ell,m)}$ as defined in \Cref{Sec-Construction}. Then, with $\tilde{d}^{(i)}_T=\frac{d_T^{(i)}}{\|\tilde{\theta}_{\ell,T}^{\text{SLOD}}\|_a}$, we have

\begin{eqnarray}
\nonumber \hat{\varphi}^{\text{HSLOD}}_{\ell,i}=\sum_{T\in S_{\omega_{J_i}}}d_T^{(i)} \hat{\theta}_{\ell,T}^{\text{SLOD}}=\sum_{T\in S_{\omega_{J_i}}}\tilde{d}^{(i)}_T \tilde{\theta}_{\ell,T}^{\text{SLOD}}&=&\sum_{T\in S_{\omega_{J_i}}}\tilde{d}^{(i)}_T \left(\tilde{\theta}_{\ell,T}^{\text{LOD}}+ \sum_{K\in\mathcal{T}_{\ell,\widetilde{\omega}_{T}}\setminus\{T\}}  c_{K}^{(T)}\widetilde{\psi}_{\ell,K}^{(T)}\right),
\end{eqnarray}
where $|c_{K}^{(T)}|\leq \delta_s$ for $K\in \mathcal{T}_{\ell,\widetilde{\omega}_T}\setminus \{T\}$ (see condition \eqref{SLOD-stability-condition}). It follows that
\begin{equation}\label{HSLOD-projection-appdx}
\Pi_{\ell}\hat{\varphi}^{\text{HSLOD}}_{\ell,i}=\sum_{T\in S_{\omega_{J_i}}}\tilde{d}^{(i)}_T \left(\Pi_{\ell}\tilde{\theta}_{\ell,T}^{\text{LOD}}+ \sum_{K\in\mathcal{T}_{\ell,\widetilde{\omega}_T}\setminus\{T\}}c_{K}^{(T)}\Pi_{\ell}\widetilde{\psi}_{\ell,K}^{(T)}\right)=\sum_{K\in\mathcal{T}_{\ell,\omega_{J_i}}}\left(\sum_{T\in S_{\omega_{J_i}}}\tilde{d}^{(i)}_T c_K^{(T)}\right)\chi_{K}.
\end{equation}
where $c_{K}^{(T)}=0$ if $K\notin \mathcal{T}_{\ell,\widetilde{\omega}_T}$, $c_{T}^{(T)}=1$, and $|c_{K}^{(T)}|\leq \delta_s$ for $K\in \mathcal{T}_{\ell,\widetilde{\omega}_T}\setminus \{T\}$.

With $\Pi_{\ell-1}\tilde{\theta}_{\ell,T_j}^{\text{SLOD}}=\sum_{T\in \mathcal{T}_{\ell-1,{\omega_{J_i}}}}q_{T}^{(j)}\chi_T$, $N_q=\#\mathcal{T}_{\ell-1,{\omega_{J_i}}}$, and $N_s=\# S_{\omega_{J_i}}$, define the matrix $\mathbf{Q}\in \mathbb{R}^{N_{q} \times N_{s}}$  such that $\mathbf{Q}_{pj}=q^{(j)}_{T_p}$. Let $\mathbf{K}\in \mathbb{R}^{N_{s}\times \# \mathrm{Ker}(\mathbf{Q})}$ be the matrix whose columns form an orthonormal basis of $\mathrm{Ker}(\mathbf{Q})$. Further, with $N_{c}=\# \mathcal{T}_{\ell,\omega_{J_i}}$, define the matrix $\mathbf{C}\in \mathbb{R}^{N_{c}\times N_{s}}$ given by $\mathbf{C}_{pj}=c_{K_p}^{(T_j)}$, where $c_{K_p}^{(T_j)}$ is such that $\Pi_{\ell}\tilde{\theta}_{\ell,T_j}^{\text{SLOD}}=\sum_{K_p\in\mathcal{T}_{\ell,\omega_{J_i}}}c_{K_p}^{(T_j)}\chi_{K_p}$. Then, with $\tilde{\mathbf{d}}^{(i)}=(\tilde{d}^{(i)}_{T})_{T\in S_{\omega_{J_i}}}$, condition \eqref{Projection1-pw-for-Aorthogonality} implies $\tilde{\mathbf{d}}^{(i)}=\mathbf{K}\mathbf{y}$ for some $\mathbf{y}\in \mathbb{R}^{\# \mathrm{Ker}(\mathbf{Q})}$. To satisfy condition \eqref{HSLOD-Stability-Practical-Condition} we need
\begin{equation}\label{HSLOD-conditions-discrete-version}
\mathbf{C}\mathbf{K}\mathbf{y}=\mathbf{e}_{T_i},
\end{equation}
where $\mathbf{e}_{T_i}\in \mathbb{R}^{N_c}$ is the canonical vector whose entries are zero except for the entry associated with the element $T_i\in \mathcal{T}_{\ell}$. In general, $\mathbf{C}\mathbf{K}$ is a rectangular matrix. Consequently, \eqref{HSLOD-conditions-discrete-version} can only be solved in the least-squares-error sense. Then, we have
\begin{equation}\label{d-coeffs-lsqr-solution}
\mathbf{y}=\left((\mathbf{C}\mathbf{K})^T \mathbf{C}\mathbf{K}\right)^{-1}(\mathbf{C}\mathbf{K})^T \mathbf{e}_{T_i}.
\end{equation}
Note that all entries of $\mathbf{C}$, $\mathbf{K}$, and $\mathbf{e}_{T_i}$ are $\mathcal{O}(1)$ (i.e., mesh independent). Furthermore, from \eqref{SLOD-stability-condition}, the absolute value of the entries of $\mathbf{C}$ are bounded by $1$. Then, from \eqref{d-coeffs-lsqr-solution} we have that the entries of $\tilde{\mathbf{d}}^{(i)}$ are $\mathcal{O}(1)$. Also, from \cite[Eq. (2.9)]{FeP20} we have that LOD basis functions are $\mathcal{O}\left(H_{\ell}^{\frac{d}{2}-1}\right)$. Consequently, $\|\tilde{\theta}_{\ell,T}^{\text{SLOD}}\|_a=\mathcal{O}\left(H_{\ell}^{\frac{d}{2}-1}\right)$. Then, since $d_{T}^{(i)}=\tilde{d}^{(i)}_T \|\tilde{\theta}_{\ell,T}^{\text{SLOD}}\|_a$, it follows that $|d_{T}^{(i)}|=\mathcal{O}\left(H_{\ell}^{\frac{d}{2}-1}\right)$. Then, we have
\begin{equation}\label{HSLOD-energy-norm-bound-appdx}
\|\hat{\varphi}^{\text{HSLOD}}_{\ell,i}\|_a=\|\sum_{T\in\mathcal{T}_{\ell}}d_T^{(i)} \hat{\theta}_{\ell,T}^{\text{SLOD}}\|_a\leq \sum_{T\in\mathcal{T}_{\ell}}|d_T^{(i)}| \|\hat{\theta}_{\ell,T}^{\text{SLOD}}\|_a=\sum_{T\in\mathcal{T}_{\ell}}|d_T^{(i)}|\leq C_{\dagger} N_{s} H^{\frac{d}{2}-1}_{\ell}
\end{equation}
where $C_{\dagger}$ is independent of the mesh size but depends on $\beta$ and $\alpha$.
Define
\begin{equation*}
\tilde{p}_{K}^{(i)}\coloneqq \begin{cases}
 \sum_{T\in\mathcal{T}_{\ell}}\tilde{d}^{(i)}_T c_K^{(T)} & \text{if }K\in \mathcal{T}_{\ell,\omega_{J_i}}\\
 0& \text{otherwise}.
 \end{cases}
\end{equation*}
Then, from \eqref{HSLOD-projection-appdx} we have that
\begin{equation*}
\Pi_{\ell}\hat{\varphi}^{\text{HSLOD}}_{\ell,i}=\sum_{K\in \mathcal{T}_{\ell}}\tilde{p}_{K}^{(i)}\chi_{K}.
\end{equation*}
Since $c_{K}^{(T)}$ and $\tilde{d}^{(i)}_{T}$ are $\mathcal{O}(1)$, we also have $\tilde{p}_{K}^{(i)}=\mathcal{O}(1)$.

From the definitions of $\mathbf{P}$, $\widetilde{\mathbf{P}}$, and $\mathbf{N}$ in \Cref{Subsec:HSLOD-Behavior} we have $\mathbf{P}$=$\widetilde{\mathbf{P}}\mathbf{N}$, $\widetilde{\mathbf{P}}_{ij}=\tilde{p}_{K_i}^{(j)}$ with $K_i\in\mathcal{T}_{\ell}$, and $\mathbf{N}_{ii}=\frac{1}{\|\hat{\varphi}^{\text{HSLOD}}_{\ell,i}\|_a}$. Using the Rayleigh quotient, we have for all $ \mathbf{x}\in \mathbb{R}^{N^{b}_{\ell}}\setminus \{\mathbf{0}\}$ and $\mathbf{y}=\mathbf{N}\mathbf{x}$ that
\begin{eqnarray*}
\mathbf{x}^T \mathbf{P}^T  \mathbf{P} \mathbf{x}=\nonumber \mathbf{x}^T \mathbf{N} \widetilde{\mathbf{P}}^T  \widetilde{\mathbf{P}} \mathbf{N} \mathbf{x} &=& \mathbf{y}^T (\widetilde{\mathbf{P}}^T \widetilde{\mathbf{P}}) \mathbf{y}\\ \nonumber
 &\geq& \lambda_{\mathrm{min}}(\widetilde{\mathbf{P}}^T \widetilde{\mathbf{P}})\mathbf{y}^T \mathbf{y}\\ \nonumber
&=& \lambda_{\mathrm{min}}(\widetilde{\mathbf{P}}^T\widetilde{\mathbf{P}}) \mathbf{x}^T \mathbf{N}^2 \mathbf{x}\\
&\geq & \lambda_{\mathrm{min}}(\widetilde{\mathbf{P}}^T\widetilde{\mathbf{P}}) \lambda_{\mathrm{min}}(\mathbf{N}^2) \mathbf{x}^T \mathbf{x}.
\end{eqnarray*}
Consequently, we have 
\begin{equation*}\label{Decoupling-Effects-on-min-eig}
\lambda_{\mathrm{min}}(\mathbf{P}^T\mathbf{P})\geq \lambda_{\mathrm{min}}(\widetilde{\mathbf{P}}^T\widetilde{\mathbf{P}}) \lambda_{\mathrm{min}}(\mathbf{N}^2).
\end{equation*}
From the definition of $\mathbf{N}$ and \eqref{HSLOD-energy-norm-bound-appdx}, we have $\lambda_{\mathrm{min}}^{-1}(\mathbf{N}^2)\leq C_{\dagger}^2 N_{s}^2 H^{d-2}_{\ell}$. Note that $\lambda_{\mathrm{min}}(\widetilde{\mathbf{P}}^T \widetilde{\mathbf{P}})=\mathcal{O}(1)$,
i.e., $\lambda_{\mathrm{min}}(\widetilde{\mathbf{P}}^T \widetilde{\mathbf{P}})$ is mesh-size independent. Therefore, for $\ell\geq 0$, we have
\begin{equation}\label{Eigen-Min-Estimate-Ptilde}
\lambda_{\mathrm{min}}^{-1}(\mathbf{P}^T \mathbf{P})\leq \lambda_{\mathrm{min}}^{-1}(\mathbf{N}^2)^{-1} \lambda_{\mathrm{min}}(\widetilde{\mathbf{P}}^T \widetilde{\mathbf{P}})^{-1}\leq \frac{C_{\dagger}^2N_{s}^2H^{d-2}_{\ell}}{ \lambda_{\mathrm{min}}(\widetilde{\mathbf{P}}^T \widetilde{\mathbf{P}}) },
\end{equation}
where $C_{\dagger}^2N_{s}^2$ and $\lambda_{\mathrm{min}}(\widetilde{\mathbf{P}}^T \widetilde{\mathbf{P}})$ are mesh independent quantities. Since the estimate for $\lambda_{\mathrm{min}}^{-1}(\mathbf{P}^T \mathbf{P})$ depends on $\lambda_{\mathrm{min}}(\widetilde{\mathbf{P}}^T \widetilde{\mathbf{P}})$, we study the behavior of this quantity next.

For $\ell>0$, if conditions \eqref{Projection1-pw-for-Aorthogonality} and \eqref{HSLOD-Stability-Practical-Condition} were satisfied exactly we would have 
\begin{eqnarray*}
\tilde{p}_{T_{i}}^{(i)}&=& \sum_{T\in\mathcal{T}_{\ell}}\tilde{d}^{(i)}_T c_{T_i}^{(T)}=1,\\ \nonumber
\tilde{p}_{\widetilde{T}_{i}}^{(i)}&=& \sum_{T\in\mathcal{T}_{\ell}}\tilde{d}^{(i)}_T c_{\widetilde{T}_{i}}^{(T)}=-1 ,\\ \nonumber
\tilde{p}_{K}^{(i)}&=& \sum_{T\in\mathcal{T}_{\ell}}\tilde{d}^{(i)}_T c_K^{(T)}=0\quad {\text{for all }}K \notin \{T_i,\widetilde{T}_{i}\},
\end{eqnarray*}
for fixed elements $T_i,\widetilde{T}_i \in\mathrm{ref}\big(\omega_{J_i}^{(\ell,0)}\big)$ associated with $\hat{\varphi}^{\text{HSLOD}}_{\ell,i}$.
This implies that $\widetilde{\mathbf{P}^T}\widetilde{\mathbf{P}}$ is a block diagonal matrix, and each of its blocks is a matrix with all diagonal entries equal to $2$ and all off-diagonal entries equal to $1$. Regardless of the size of these blocks, their smallest eigenvalue is always one. Consequently, if both conditions \eqref{Projection1-pw-for-Aorthogonality} and \eqref{HSLOD-Stability-Practical-Condition} are satisfied exactly, we have $\lambda_{\mathrm{min}}(\widetilde{\mathbf{P}}^T \widetilde{\mathbf{P}})=1$. In general, however, the second condition cannot be satisfied exactly but only in the least-squares-error sense. But, if the least-squares error is small enough we should have $\lambda_{\mathrm{min}}(\widetilde{\mathbf{P}}^T \widetilde{\mathbf{P}})$ far enough from $0$.

The basis functions for the case $\ell=0$ are regular SLOD basis functions and thus $\widetilde{\mathbf{P}}=\mathbf{C}$. From condition \eqref{SLOD-stability-condition} we have that $\mathbf{C}=\mathbf{I}+\delta$, were $\mathbf{I}$ is the identity matrix, and $\delta$ is a matrix with zero diagonal entries and all off-diagonal entries less than $\delta_s$ in absolute value. Hence, $\mathbf{C}$ is a matrix obtained from a perturbation of the identity matrix. If $\delta_s$ is small enough, the smallest eigenvalue of $\mathbf{C}^T \mathbf{C}$ should be far enough from $0$, and in that case the basis at level $\ell=0$ will be well-behaved.

From \eqref{Eigen-Min-Estimate-Ptilde} and \eqref{HSLOD-block-condnum-bound} it follows that the condition numbers of the diagonal blocks $\hat{\mathbb{A}}_{\ell \ell}^{H_L}$ of the stiffness matrix $\hat{\mathbb{A}}_{H_L}$ are mesh independent for $\ell>0$ and $\mathcal{O}(H_{0}^{-2})$ for $\ell=0$ (in accordance with \cite[Assumption (2.5)]{HaPe21b}).

\begin{remark}
In case the local orthogonalization procedure is applied, and performed via the QR factorization method, the normalization step contained in that algorithm to make the vectors have a unit euclidean norm will swap the dependence on $H_{\ell}$ of $\lambda_{\mathrm{min}}(\widetilde{\mathbf{P}^T}\widetilde{\mathbf{P}})$ and $\lambda_{\mathrm{min}}(\mathbf{N}^2)$. In that case we still have $\lambda_{\mathrm{min}}^{-1}(\mathbf{P}^T\mathbf{P}) \leq C_{\natural}N_{s}^2H^{d-2}_{\ell}$ for some $C_{\natural}\geq 0$ that is independent of the mesh size, and this implies that the condition number of the level-blocks of $\hat{\mathbb{A}}_{H_L}$ are still mesh independent. 
\end{remark}

\bibliographystyle{abbrv}
\bibliography{bib}

\end{document}